\documentclass{article}
\usepackage{geometry}
\usepackage[utf8]{inputenc}
\usepackage{amsmath}
\usepackage{amsfonts}
\usepackage{amssymb}
\usepackage{amsthm}
\usepackage{graphicx}
\usepackage{caption}
\usepackage{subcaption}
\usepackage{float}
\usepackage{diagbox}
\usepackage{cite}
\usepackage{algorithm}

\usepackage{pgf,tikz,pgfplots}
\usepgfplotslibrary{groupplots}
\pgfplotsset{compat=1.15}
\usepackage{mathrsfs}
\usetikzlibrary{arrows}

\usepackage[pdfencoding=auto]{hyperref}
\hypersetup{
    colorlinks,
    citecolor=green,
    filecolor=black,
    linkcolor=blue,
    urlcolor=black
}
\geometry{
 a4paper,
 left=25mm,
 right=25mm,
 top=20mm,
 }

\newtheorem{Proposition}{Proposition}
\newtheorem{Lemma}{Lemma}
\newtheorem{Remark}{Remark}
\newtheorem{Assumption}{Assumption}

\newcommand{\dt}{\,\partial_t\, }
\newcommand{\dx}{\,\partial_x\, }
\newcommand{\dv}{\,\partial_v\, }

\newcommand{\dxx}{\,\partial_{xx}\, }
\newcommand{\dtx}{\,\partial_{tx}\, }

\newcommand{\dvv}{\,\partial_{vv}\, }
\newcommand{\dxxx}{\,\partial_{xxx}\, }
\newcommand{\dvvv}{\,\partial_{vvv}\, }
\newcommand{\dxxxx}{\,\partial_{xxxx}\, }

\newcommand{\dd}{\,\mathrm{d}}


\newcommand{\R}{\mathbb{R}}

\newcommand{\ipd}{{i+\frac{1}{2}}}
\newcommand{\imd}{{i-\frac{1}{2}}}
\newcommand{\jpd}{{j+\frac{1}{2}}}
\newcommand{\jmd}{{j-\frac{1}{2}}}

\newcommand{\M}{\mathcal{M}}
\newcommand{\T}{\mathcal{T}}

\newcommand{\Oeps}[1]{\mathcal{O}(\varepsilon^{#1})}
\newcommand*\circled[1]{\tikz[baseline=(char.base)]{\node[shape=circle,draw,inner sep=2pt] (char) {#1};}}
\numberwithin{equation}{section}

\title{Hybrid Kinetic/Fluid numerical method for the Vlasov-BGK equation in the diffusive scaling}
\author{Tino Laidin \footnote{Univ. Lille, CNRS, Inria, UMR 8524 - Laboratoire Paul Painlevé, F-59000 Lille, France (\texttt{tino.laidin@univ-lille.fr})}}

\begin{document}

\maketitle

\begin{abstract}
    This paper presents a hybrid numerical method for linear collisional kinetic equations with diffusive scaling. The aim of the method is to reduce the computational cost of kinetic equations by taking advantage of the lower dimensionality of the asymptotic fluid model while reducing the error induced by the latter approach. It relies on two criteria motivated by a pertubative approach to obtain a dynamic domain decomposition. The first criterion quantifies how far from a local equilibrium in velocity the distribution function of particles is. The second one depends only on the macroscopic quantities that are available on the whole computing domain. Interface conditions are dealt with using a micro-macro decomposition and the method is significantly more efficient than a standard full kinetic approach. Some properties of the hybrid method are also investigated, such as the conservation of mass.\\[1em]
    \textsc{Keywords:} Kinetic equations; Diffusion scaling; Asymptotic preserving scheme; Micro-macro decomposition; Hybrid solver\\
    \textsc{Mathematics Subjects Classification:} (Primary) 65M08, 82M12 (Secondary) 35B40, 65M55
    
\end{abstract}
\section{Introduction}\label{sec:Intro}
Modelling semiconductors has become a major issue in the last decades in many fields of science such as physics, mathematics, and engineering. They have a wide scope of applications going from computer hardware (CPU, GPU, etc...) to smart devices, medical equipment and even solar panels. Therefore, it seems clear that understanding the mathematical models describing those uses is crucial along with enhancing the computing and energetic performances of the simulations.

\noindent Semiconductor models can be classified in three major scales: particles (microscopic), kinetic (mesoscopic), and fluid (macroscopic). The first type of model is about describing the system as point particles interacting with each other via collisions or electromagnetic forces. Such a system is in practice extremely large and its study both theoretically and numerically becomes unattainable. The solution is instead of considering each individual particle, to describe them with a probability distribution $f^\varepsilon = f^\varepsilon(t,x,v)$ which depends on the time $t\geq0$, the space variable $x$ and the velocity variable $v$. This is the kinetic scale and it is computationally more accessible while still keeping a sense of the individuality of the particles. At this scale, the semiconductor is typically described by a Vlasov-type equation modelling the long-range interactions to which we add a collision operator $\mathcal{Q}(f)$ to take into account the short-range ones. 

\noindent Let $d\geq 1$ be an integer. We denote by $\Omega_x$ and $\Omega_v$ two open and bounded subsets of $\R^d$. In this article, we are interested in the following scaled equation to find a particle distribution $f$ depending on $t\geq0$, $x\in\Omega_x$ and $v\in\Omega_v$, solution to:
\begin{equation*}\label{prob:Peps}\tag{$P^\varepsilon$}
    \left\lbrace\begin{aligned}
        &\frac{\partial}{\partial t} f^\varepsilon + \frac{v}{\varepsilon^{\alpha}}\,\cdot\,\nabla_x f^\varepsilon + \frac{E}{\varepsilon^{\alpha}}\,\cdot\,\nabla_v f^\varepsilon=\frac{1}{\varepsilon^{\alpha+1}}\mathcal{Q}(f^\varepsilon),\\
        &f^\varepsilon(0,x,v)=f_0(x,v),
    \end{aligned}\right.
\end{equation*}
Here, $E$ is a given exterior electrical field depending only on the space variable $x$, and $\varepsilon$ is the scaling parameter. We also assume that the initial condition $f_0$ is nonnegative and does not depend on $\varepsilon$. Throughout this work, the collision operator will be the linearized Bhatnagar–Gross–Krook (BGK) operator \cite{BhatnagarGrossKrook1954}: 
\begin{equation}\label{Q_BGK}
    \mathcal{Q}(f)=\rho\M - m_0f ,\quad \mbox{ where } \rho=\langle f\rangle \mbox{ and }\langle f\rangle=\int_{\Omega_v} f \,\dd v.
\end{equation}
Here $\M(v)$ denotes a given nonnegative even function of $v$, the so-called Maxwellian. We assume that it admits at least finite zeroth, second, and fourth moments $m_0, m_2, m_4$ in velocity where $m_k$ given by $$m_k =\int_{\Omega_v} |v|^k\M(v)\,\dd v.$$ A standard function satisfying these assumptions is the centered Gaussian:
$$\mathcal{M}(v)=\frac{e^{-|v|^2/2}}{(2\pi)^{d/2}}.$$ In a more general setting, the electric field $E$ is the gradient of a potential $V\in\mathcal{C}^2(\Omega_x)$, $E=-\nabla V$, and $\eqref{prob:Peps}$ admits an equilibrium given by 
\begin{equation}\label{GlobalEq}
    F(x,v) = \frac{M_0}{\mu_0}e^{-(V(x)+\frac{|v|^2}{2})},\quad (x,v)\in\Omega_x\times\Omega_v
\end{equation}
where $\mu_0=\int_{\Omega_x\times\Omega_v} e^{-(V(x)+\frac{|v|^2}{2})}\,\dd x\,\dd v$ and $M_0=\int_{\Omega_x\times\Omega_v} f_0(x,v)\,\dd x\,\dd v$ is the mass of the initial condition. In particular, $F$ can be written under a separate variable form:
\begin{equation}
    F(x,v) = M_0\phi(x)\mathcal{M}(v),\quad \text{where }\phi=\frac{e^{-V(x)}}{\int_{\Omega_x} e^{-V(x)}\,\dd x}.
\end{equation}
The functions $\M$ and $\phi$ are called local equilibria in velocity and space respectively.
\noindent Equation \eqref{prob:Peps} is a scaled equation. The parameter $\varepsilon$ is often called the Knudsen number. It is the ratio between the mean free path of the particles and the length scale of observation. By considering different values of $\alpha$ in \eqref{prob:Peps} one can obtain different asymptotic descriptions of the model as $\varepsilon$ tends to $0$. The choice $\alpha=0$ corresponds to the hydrodynamic scaling. Such a model can be shown to reach a fluid limit given by the Euler or Navier-Stokes equations  \cite{SaintRaymond2009, jungel_transport_2009}. This work will focus on $\alpha=1$: the diffusive scaling. Such a scaling and its asymptotic limit have first been studied in \cite{BensoussanLionsPapanicolaou1979}. The asymptotic expansion of the distribution function $f^\varepsilon$ in $\varepsilon$ is justified in \cite{BardosSantosSentis1984} for the neutron transport and in \cite{Poupaud1991} for the linear Boltzmann equation. In \cite{GoudonPoupaudDegond2000} a large class of linear collision operators is dealt with and in \cite{GoudonPoupaud2001}, the authors justified an approximation of the kinetic equation by diffusion using homogenization.

\noindent In practice, the Knudsen number can be of order 1 down to 0 depending on the physics we are modelling. On the one hand, when $\varepsilon\sim 1$, the system is said to be in the kinetic regime. It models a system with few collisions between particles. On the other hand, when $\varepsilon\ll 1$, the system reaches the fluid regime. This limit case is described by a drift-diffusion model:
\begin{equation*}\label{prob:P}\tag{$P$}
    \left\lbrace\begin{aligned}
        &\dt \rho -\mathrm{div}_x\left(\,\nabla_x\rho - E\rho\right) = 0,\\
        &\rho(0,x)=\rho_0(x).
    \end{aligned}\right.
\end{equation*}

\noindent While the kinetic model is precise in its description of the system, it remains expensive to compute numerically. Indeed, if we consider the full $3$D case, the phase space is 7 dimensional and the collision operator can induce important nonlinearities. On the other side, the fluid model simplifies greatly the description of the system and is much less expensive in computational resources as the solution does not depend on the velocity variable anymore. The use of the later becomes therefore attractive. Nevertheless, one must take into account that this description only applies for small values of the Knudsen number. 

\noindent A strategy to take advantage of both scales of description and reduce the computational time is to use a hybrid method. In the diffusive setting, various methods were developed. In \cite{CrestettoCrouseillesDimarcoLemou2019} the authors considered a coupling between a Monte-Carlo approximation for the microscopic part of the equation and a finite volume method for the macroscopic one. An automatic domain decomposition method with a buffer zone is used in \cite{DimarcoMieussensRispoli2014}. In \cite{EinkemmerHuWang2021}, a dynamic low-rank method is applied to a linear transport equation based on a micro-macro approach.

\noindent In this paper, we shall adapt the method developed in \cite{FilbetRey2015} for a hydrodynamic scaling to the diffusive setting. The strategy is to consider the Chapman-Enskog expansion of $f^\varepsilon$ to derive criteria that allow to determine the best regime to use in a given subdomain. This defines a hybrid kinetic/fluid solver with an automatic domain decomposition. Furthermore, interface conditions are dealt with using a micro-macro decomposition of the distribution. The coupling method can be adapted to different solvers and the only addition is the implementation of the subdomains indicators.

\noindent On the discrete level, the scaling parameter $\varepsilon$ can result in longer computation time. Indeed as $\varepsilon$ tends to $0$, the transport velocity in \eqref{prob:Peps} formally goes to infinity. Numerically, it translates to smaller and smaller time steps to guarantee the stability of the scheme. A solution to this problem is to use schemes that remain stable in the diffusive limit $\varepsilon\rightarrow0$. These schemes fall into the framework of Asymptotic Preserving (AP) schemes, a notion introduced in \cite{Klar1999} and \cite{Jin1999}. We also refer to the recent review article \cite{Jin2021ReviewAP}. This AP property can be summarized by the diagram in Figure \ref{fig:APdiag}. In the diagram, $\rho$ corresponds to a solution to the problem \eqref{prob:P} and $\rho_h$ is an approximation of $\rho$. On the other hand, $f^\varepsilon$ is a solution to the problem \eqref{prob:Peps} and $f^\varepsilon_h$ is an approximation of $f^\varepsilon$. The idea behind AP scheme is threefold. Firstly, the scheme for \eqref{prob:Peps} has to be a consistent discretization of the limit model as $\varepsilon\rightarrow0$. Secondly, a scheme is considered truly AP only if the stability criterion on the time step is independent on the parameter $\varepsilon$. Thirdly, one can explicitly take $\varepsilon=0$ in the scheme. The need of an AP scheme in the kinetic domain of our hybrid scheme is crucial. The limit scheme is used in the fluid regions of the domain decomposition and it aims to ensure good transitions between kinetic and fluid states. While AP schemes are designed to resolve both the mesoscopic and the macroscopic scales automatically, it often implies more expensive computation even in a fluid regime. By using a hybrid method, one can effectively take advantage of the properties of an AP scheme while limiting its use and therefore reduce the computation time. This method falls into the framework of Asymptotically Complexity Diminishing Schemes (ACDS) \cite{DegondDimarco2012, CrestettoCrouseillesDimarcoLemou2019}. Indeed, as the Knudsen number tends to zero, the hybrid method is designed to use the less complex fluid model more often.

\paragraph{Plan of the paper.} The outline is as follows. Section \ref{sec:CE} is dedicated to the derivation of a hierarchy of macroscopic models based on the Chapman-Enskog expansion of the distribution. In Section \ref{sec:MM} we present a micro-macro reformulation of the Vlasov-BGK equation. This reformulation is then used to develop an Asymptotic Preserving method with a finite volume approach. Section \ref{sec:DynCoup} is dedicated to the the hybrid method. The coupling indicators based on the hierarchy introduced in Section \ref{sec:CE} are presented and the implementation of the hybrid scheme is discussed. Finally, numerical experiments are performed in Section \ref{sec:NumSim}.

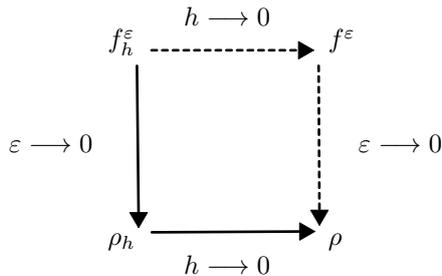
\begin{figure}
    \centering
    \begin{tikzpicture}[line cap=round,line join=round,>=triangle 45,x=1.0cm,y=1.0cm]
\clip(-3.0,-4) rectangle (3.0,0.0);
\draw [line width=1.pt,dash pattern=on 2pt off 2pt] (-1.,-1.)-- (1.,-1.);
\draw [line width=1.pt,dash pattern=on 2pt off 2pt] (1.198729454088928,-1.2068637637137198)-- (1.2095466323889421,-3.1864073926163154);
\draw [line width=1.pt] (1.0148374229886865,-3.402750958616599)-- (-0.9971577408139543,-3.4135681369166133);
\draw [line width=1.pt] (-1.1918669502142099,-1.2068637637137198)-- (-1.1702325936141813,-3.218858927516358);
\draw (1.6,-2.0) node[anchor=north west] {$\varepsilon\longrightarrow0$};
\draw (-3.0,-2.0) node[anchor=north west] {$\varepsilon\longrightarrow0$};
\draw (-0.7,-0.3) node[anchor=north west] {$h\longrightarrow0$};
\draw (-0.7,-3.6) node[anchor=north west] {$h\longrightarrow0$};
\draw (1.2,-0.6) node[anchor=north west] {$f^\varepsilon$};
\draw (-1.7,-0.6) node[anchor=north west] {$f^\varepsilon_h$};
\draw (-1.7,-3.3) node[anchor=north west] {$\rho_h$};
\draw (1.2,-3.3) node[anchor=north west] {$\rho$};
\begin{scriptsize}
\draw [fill=black,shift={(1.,-1.)},rotate=270] (0,0) ++(0 pt,3.75pt) -- ++(3.2475952641916446pt,-5.625pt)--++(-6.495190528383289pt,0 pt) -- ++(3.2475952641916446pt,5.625pt);
\draw [fill=black,shift={(1.2095466323889421,-3.1864073926163154)},rotate=180] (0,0) ++(0 pt,3.75pt) -- ++(3.2475952641916446pt,-5.625pt)--++(-6.495190528383289pt,0 pt) -- ++(3.2475952641916446pt,5.625pt);
\draw [fill=black,shift={(1.0148374229886865,-3.402750958616599)},rotate=270] (0,0) ++(0 pt,3.75pt) -- ++(3.2475952641916446pt,-5.625pt)--++(-6.495190528383289pt,0 pt) -- ++(3.2475952641916446pt,5.625pt);
\draw [fill=black,shift={(-1.1702325936141813,-3.218858927516358)},rotate=180] (0,0) ++(0 pt,3.75pt) -- ++(3.2475952641916446pt,-5.625pt)--++(-6.495190528383289pt,0 pt) -- ++(3.2475952641916446pt,5.625pt);
\end{scriptsize}
\end{tikzpicture}
    \caption{The AP diagram ($h$ denotes the size of the discretization)}
    \label{fig:APdiag}
\end{figure}

\section{Chapman-Enskog expansion}\label{sec:CE}
The aim of this section is to derive a hierarchy of macroscopic models from which we will derive a macroscopic coupling criterion. From now on, we set $d=1$, $\Omega_x=[0,x_\star]$ with periodic boundary conditions and $\Omega_v=\R$. We also assume that the electrical field $E$ is periodic on $[0,x_\star]$. Let us first recall the integro-differential problem we are interested in :
\begin{equation}\label{VlasovBGK}\tag{VBGK}
    \left\lbrace\begin{aligned}
        \dt f^\varepsilon + \frac{1}{\varepsilon}\mathcal{T}(f^\varepsilon)=\frac{1}{\varepsilon^2}(\rho^\varepsilon\M-f^\varepsilon),\\
        f^\varepsilon(0,x,v)=f_0(x,v),
    \end{aligned}\right.
\end{equation}
where $$\mathcal{T}(f)= v\dx f + E\dv f.$$ Let us make some assumptions on the Maxwellian $\M$:
\begin{equation}
    \left\lbrace\begin{aligned}
        &\M(v) > 0 ,\quad\forall v\in\R,\\
        &\M(-v)=\M(v), \quad\forall v\in\R,\\
        &m_0 = 1,\quad m_2<+\infty \mbox{ and } m_4 <\infty.
    \end{aligned}\right.
\end{equation}
As a consequence of the symmetric domain in velocity and the symmetry of $\M$, odd moments of $\M$ vanish. The last assumption $m_0 = 1$ can easily be obtained by normalizing the Maxwellian. In particular, these assumptions are satisfied with the centered Gaussian. Let us set $\gamma(v)=\frac{1}{\M(v)}$ and introduce the measure $$\dd\gamma=\gamma(v)\,\dd v=\frac{\dd v}{\M(v)},$$ and $L^2(\dd x\,\dd\gamma)$ the space of square integrable functions against the measure $\dd x\dd\gamma$ equipped with the scalar product $$(f_1,f_2)_{L^2(\dd x\,\dd\gamma)}=\int_{\Omega_{x}\times\Omega_{v}} f_{1}f_{2} \,\dd x\,\dd\gamma.$$ With an initial data in $L^2(\dd x\,\dd\gamma)$, there is a unique solution to \eqref{VlasovBGK} (see, e.g., \cite{AllaireBlancDespresGolse:lecturenotes}) which conserves mass and nonnegativity.

\noindent One can define the null space of the linear BGK operator \eqref{Q_BGK}. Let $$\mathcal{N}=\left\lbrace f=\rho\M\mbox{ where } f\in L^2(\dd x\,\dd\gamma),\,\rho=\langle f\rangle \right\rbrace.$$
The space $\mathcal{N}$ is sometimes referred to as the equilibrium manifold. In particular, one has that $$\mathcal{N}^\perp=\left\lbrace f\in L^2(\dd x\,\dd\gamma)\mbox{ such that }\langle f\rangle=0 \right\rbrace.$$ With these notations, one can decompose $f$ as its equilibrium part in $\mathcal{N}$ plus a perturbative part in $\mathcal{N}^\perp$. Note that the perturbative part is not necessarily small.

\noindent Let us now introduce the so-called Chapman-Enskog expansion of the distribution function $f^\varepsilon$: 
\begin{equation}\label{eq:CE}
    f^\varepsilon(t,x,v)=\rho^\varepsilon(t,x)\M(v) + \sum^\infty_{k=1}\varepsilon^k h^{(k)}(t,x,v).
\end{equation}
This expansion comes with the following assumptions. First, the functions $h^{(k)}$ do not depend on the parameter $\varepsilon$. Secondly, the functions $h^{(k)}$ are functions of the density $\rho^\varepsilon$, the electric field $E$, the velocity variable $v$ and the Maxwellian $\M$. Thirdly, we assume that $h^{(k)}\in\mathcal{N}^\perp$ for all $k$ and is therefore mean free: $$\langle\, h^{(k)}\,\rangle=0,\quad\forall k\geq1.$$ 

\noindent To derive a hierarchy of models, let us consider truncations of order $K\in\mathbb{N}$ of the Chapman-Enskog expansion:
\begin{equation}\label{eq:CEdiscrete}
    f^\varepsilon(t,x,v)=\rho^\varepsilon(t,x)\M(v)+\sum_{k=1}^K\varepsilon^k h^{(k)}(t,x,v).
\end{equation}
The next step is to plug this expansion in \eqref{VlasovBGK}. It leads to 
\begin{equation*}
    \begin{aligned}
        \dt(\rho^\varepsilon\M) + \dt\sum_{k=1}^K\varepsilon^k h^{(k)} = -\frac{1}{\varepsilon}\T(\rho^\varepsilon M) -\sum_{k=1}^K\varepsilon^{k-1} \T(h^{(k)}) - \frac{1}{\varepsilon}\sum_{k=1}^K\varepsilon^{k-1} h^{(k)}.
    \end{aligned}
\end{equation*}
Multiplying by $\varepsilon$ and rearranging the terms, one obtains
\begin{equation*}
    \begin{aligned}
         \sum_{k=0}^{K-1}\varepsilon^{k} h^{(k+1)} = -\T(\rho^\varepsilon\M) - \sum_{k=1}^K\varepsilon^{k} \T(h^{(k)}) -\dt\sum_{k=2}^{K+1}\varepsilon^k h^{(k-1)} -\varepsilon\dt(\rho^\varepsilon\M).
    \end{aligned}
\end{equation*}
We now identify powers of $\varepsilon$:

\begin{subequations}\label{eq:identification}
\begin{alignat}{2}
k=0:&\quad &h^{(1)} = &-\T(\rho^\varepsilon\M),\label{eq:g1}\\
k=1:&\quad &h^{(2)} = &-\dt(\rho^\varepsilon\M) -\T(h^{(1)}),\label{eq:g2}\\
2\leq k\leq K-1:&\quad &h^{(k+1)} = &-\dt h^{(k-1)} - \T(h^{(k)}).\label{eq:gk}
\end{alignat}
\end{subequations}

\subsection{Macroscopic model}\label{sec:fluid models}
To derive the fluid model, let us truncate the Chapman-Enskog expansion at first order $K=1$: 
\begin{equation}\label{eq:CEorder1}
    f^\varepsilon=\rho^\varepsilon\M + \varepsilon h^{(1)}.
\end{equation}
We start by integrating \eqref{VlasovBGK} in velocity: 
$$\dt\rho^\varepsilon+\frac{1}{\varepsilon}\dx\langle\, vf^\varepsilon \,\rangle=0.$$
Then $f^\varepsilon$ is replaced by its expression \eqref{eq:CEorder1}:
$$\dt\rho^\varepsilon+\frac{1}{\varepsilon}\dx\left(\langle\, v\rho^\varepsilon\M\,\rangle+\langle\,\varepsilon vh^{(1)}\,\rangle\right)=0,$$
where $h^{(1)}$ is given by the identification \eqref{eq:g1}. Using the fact that $\rho^\varepsilon$ does not depend on the velocity and that odd moments of the Maxwellian are zero, we obtain by plugging in the expression of $h^{(1)}$:
\begin{equation}\label{intVH1}
    \dt\rho^\varepsilon+\dx\langle\, v( - v\dx \rho^\varepsilon\M - E\rho^\varepsilon\dv\M)\,\rangle=0.
\end{equation}
Finally, assuming that $\rho^\varepsilon\rightarrow\rho$ as $\varepsilon\rightarrow0$, we formally obtain the drift-diffusion model:
$$\dt\rho-\dx\left(m_2\dx\rho+m_1'E\rho\right)=0,$$
where $m_1'=\langle\, v\dv\M\,\rangle$ denotes the first moment of the derivative of the Maxwellian. Note that with our choice of $\M(v)$ as a centered Gaussian, an integration by parts allows us to obtain $m_1'=-1$. The following proposition is also rigorously proven in \cite{DolbeaultMouhotSchmeiser2015}.
\begin{Proposition}
Let $f$ be a solution to \eqref{VlasovBGK} with an initial data in $L^2(\frac{\dd x\dd v}{F})$. We assume that $E\in\mathcal{C}^3(\Omega_x)$. Therefore, $f^\varepsilon\rightarrow \rho\M$ as $\varepsilon\rightarrow0$ in $L^2(\frac{\dd x\dd v}{F})$ where $\rho$ is solution of the following drift-diffusion equation:
\begin{equation}\label{DD}\tag{$DD$}
    \dt\rho-\dx\left(m_2\dx\rho+m_1'E\rho\right)=0\quad\mbox{with}\quad \rho_0=\langle\, f_0\,\rangle.
\end{equation}    
\end{Proposition}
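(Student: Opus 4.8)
The result is the rigorous counterpart of the formal computation in \eqref{intVH1}, and I would establish it by the moment (micro-macro) method, which fits the framework of this paper, rather than by the full hypocoercivity construction of \cite{DolbeaultMouhotSchmeiser2015}. The starting point is the Hilbert space $H=L^2(\dd x\,\dd v/F)$, in which the transport operator $\T$ is skew-symmetric and the collision operator $f\mapsto\langle f\rangle\M-f$ is symmetric and nonpositive. Skew-symmetry of $\T$ follows from $\T F=0$ together with an integration by parts in $x$ (periodic) and $v$ (decay at infinity); nonpositivity of the collision term follows, for each fixed $x$, from Cauchy--Schwarz, $\langle f\rangle^2\le\langle f^2/\M\rangle$, so that $\langle(\langle f\rangle\M-f)f/\M\rangle=-\langle(f-\langle f\rangle\M)^2/\M\rangle\le0$. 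Writing $\Pi f=\langle f\rangle\M$ for the orthogonal projection onto $\N$, the dissipation is exactly $\|(I-\Pi)f\|_H^2$.

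First I would derive the a priori energy estimate. Testing \eqref{VlasovBGK} against $f^\varepsilon/F$ and using skew-symmetry of $\T$ kills the transport contribution and yields
\[
\tfrac12\frac{\dd}{\dd t}\|f^\varepsilon(t)\|_H^2+\frac{1}{\varepsilon^2}\|(I-\Pi)f^\varepsilon(t)\|_H^2=0 .
\]
Integrating in time gives the uniform bound $\|f^\varepsilon(t)\|_H\le\|f_0\|_H$ and the crucial dissipation bound $\|(I-\Pi)f^\varepsilon\|_{L^2(0,T;H)}\le C\varepsilon$. Introducing the micro-macro decomposition $f^\varepsilon=\rho^\varepsilon\M+\varepsilon g^\varepsilon$ with $g^\varepsilon=(I-\Pi)f^\varepsilon/\varepsilon\in\N^\perp$, the latter reads $\|g^\varepsilon\|_{L^2(0,T;H)}\le C$, so that $f^\varepsilon-\rho^\varepsilon\M=\mathcal{O}(\varepsilon)$ and every weak limit of $f^\varepsilon$ is a local equilibrium $\rho\M$.

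Next I would extract the limit equation. Integrating \eqref{VlasovBGK} in $v$ gives the exact continuity equation $\dt\rho^\varepsilon+\dx\langle v g^\varepsilon\rangle=0$ (the leading flux vanishes since $\langle v\M\rangle=0$). Inserting the decomposition into \eqref{VlasovBGK} and isolating the microscopic part yields
\[
g^\varepsilon=-\T(\rho^\varepsilon\M)-\varepsilon\bigl(\dt(\rho^\varepsilon\M)+\T g^\varepsilon\bigr)-\varepsilon^2\dt g^\varepsilon ,
\]
whose leading term reproduces \eqref{eq:g1}. Taking the first velocity moment and using $\langle v\,\T(\rho^\varepsilon\M)\rangle=m_2\dx\rho^\varepsilon+m_1'E\rho^\varepsilon$ gives $\langle v g^\varepsilon\rangle=-m_2\dx\rho^\varepsilon-m_1'E\rho^\varepsilon+\mathcal{O}(\varepsilon)$, so the continuity equation is, up to $\mathcal{O}(\varepsilon)$, the drift-diffusion equation \eqref{DD}. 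The energy estimate already controls $\dx\rho^\varepsilon$ in $L^2$, since $\|g^\varepsilon\|_H$ dominates the $H$-norm of $\T(\rho^\varepsilon\M)$ and the quadratic form in velocity is coercive in $\dx\rho^\varepsilon$ with constant $m_2$; hence $\rho^\varepsilon$ is bounded in $L^2(0,T;H^1_x)$, while the continuity equation bounds $\dt\rho^\varepsilon$ in $L^2(0,T;H^{-1}_x)$. By Aubin--Lions, $\rho^\varepsilon\to\rho$ strongly in $L^2((0,T)\times\Omega_x)$. This strong convergence lets me pass to the limit in $E\rho^\varepsilon$ and, together with $\dx\rho^\varepsilon\rightharpoonup\dx\rho$, identify the limit flux as $-m_2\dx\rho-m_1'E\rho$; passing to the limit in the weak formulation then yields \eqref{DD}, with the initial datum $\rho^\varepsilon(0)=\langle f_0\rangle=\rho_0$ preserved.

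The main obstacle is the rigorous vanishing of the remainder in the flux identity. The term $\varepsilon^2\dt g^\varepsilon$ is harmless in the distributional sense (integrate by parts in $t$ against a test function), but $\varepsilon\langle v\,\T g^\varepsilon\rangle=\varepsilon\dx\langle v^2 g^\varepsilon\rangle+\varepsilon E\langle v\dv g^\varepsilon\rangle$ involves the second velocity moment of $g^\varepsilon$. Bounding $|\langle v^2 g^\varepsilon\rangle|\le m_4^{1/2}\,\langle (g^\varepsilon)^2/\M\rangle^{1/2}$ is precisely where the hypothesis $m_4<\infty$ enters, and the $\dv g^\varepsilon$ contribution is absorbed by integrating by parts in $v$ against $vE\M$. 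Controlling these weighted moments uniformly in $\varepsilon$ — together with the regularity $E\in\mathcal{C}^3$, which the hypocoercivity proof of \cite{DolbeaultMouhotSchmeiser2015} exploits to close the auxiliary estimates — is the technical heart of the argument; the remaining compactness machinery is soft.
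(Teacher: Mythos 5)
You should first be aware that the paper does not actually prove this proposition: the text preceding it is a purely formal computation (truncate the Chapman--Enskog expansion at order $K=1$, integrate \eqref{VlasovBGK} in $v$, insert $h^{(1)}=-\T(\rho^\varepsilon\M)$ and compute the moments $m_2$ and $m_1'$), and the rigorous statement is delegated to the $L^2$-hypocoercivity result of \cite{DolbeaultMouhotSchmeiser2015}. Your moment-method sketch is therefore a genuinely different route. What it buys is a self-contained, qualitative compactness argument in the spirit of the classical diffusion-limit literature (\cite{BardosSantosSentis1984}, \cite{Poupaud1991}): the energy identity in $L^2(\dd x\,\dd v/F)$ correctly yields $\|f^\varepsilon(t)\|_H\le\|f_0\|_H$ and the dissipation bound $\|(I-\Pi)f^\varepsilon\|_{L^2_t(H)}\le C\varepsilon$, the exact continuity equation $\dt\rho^\varepsilon+\dx\langle vg^\varepsilon\rangle=0$ is right, and the identification $\langle vg^\varepsilon\rangle=-(m_2\dx\rho^\varepsilon+m_1'E\rho^\varepsilon)+\mathcal{O}(\varepsilon)$ matches the paper's formal $h^{(1)}$ computation. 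What it loses relative to \cite{DolbeaultMouhotSchmeiser2015} is any rate of convergence, which the hypocoercivity framework provides and which the paper implicitly relies on elsewhere (e.g.\ in the long-time experiments).

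Two steps deserve more care. First, your claim that ``the energy estimate already controls $\dx\rho^\varepsilon$ in $L^2$, since $\|g^\varepsilon\|_H$ dominates the $H$-norm of $\T(\rho^\varepsilon\M)$'' does not follow directly: the micro relation reads $\T(\rho^\varepsilon\M)=-g^\varepsilon-\varepsilon\,\dt f^\varepsilon-\varepsilon\T g^\varepsilon$, and the term $\varepsilon\,\dt f^\varepsilon$ is not bounded in $L^2(0,T;H)$ by the basic energy estimate, so the $L^2(0,T;H^1_x)$ bound on $\rho^\varepsilon$ (hence Aubin--Lions) is not free; this is precisely where averaging lemmas or the auxiliary hypocoercivity functional must enter, and it is also what you need if you want the \emph{strong} $L^2$ convergence asserted in the statement rather than weak convergence of $\rho^\varepsilon$. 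Second, the Aubin--Lions step is actually unnecessary for identifying the limit equation: the problem is linear with a fixed smooth field $E$, so $\rho^\varepsilon\rightharpoonup\rho$ weakly in $L^2$ already passes to the limit in $m_2\dx\rho^\varepsilon$ (distributionally) and in $E\rho^\varepsilon$, while the remainders $\varepsilon\dx\langle v^2g^\varepsilon\rangle$ and $\varepsilon^2\dt\langle vg^\varepsilon\rangle$ vanish in $\mathcal{D}'$ thanks to $m_4<\infty$ and the uniform bound on $g^\varepsilon$, exactly as you indicate (note also that $\langle v\dv g^\varepsilon\rangle=-\langle g^\varepsilon\rangle=0$, so that term vanishes identically rather than needing to be absorbed). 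With these adjustments your outline is a correct alternative derivation of \eqref{DD}, complementary to the paper's formal expansion and to the quantitative proof it cites.
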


\subsection{Higher order macroscopic model}\label{sec:HigherOrder}
To derive the third order fluid model, we truncate the Chapman-Enskog expansion at third order $K=3$: 
\begin{equation}\label{eq:CEorder3}
    f^\varepsilon=\rho^\varepsilon\M+\varepsilon h^{(1)}+\varepsilon^2h^{(2)}+\varepsilon^3h^{(3)}.
\end{equation}
We will see in the derivation process that the second order yields no additional information.

\noindent Again, we start by integrating \eqref{VlasovBGK} in velocity and we replace $f$ by its expansion \eqref{eq:CEorder3}:
\begin{equation}\label{eq:CE_DD_tmp}
    \dt\rho^\varepsilon+\frac{1}{\varepsilon}\dx\left\langle v\rho^\varepsilon\M \right\rangle+\dx\left\langle vh^{(1)}+\varepsilon vh^{(2)}+\varepsilon^2vh^{(3)}\right\rangle=0.
\end{equation}
At this point, we use the identification \eqref{eq:identification} to compute the perturbations $h^{(1)}$, $h^{(2)}$ and $h^{(3)}$. One obtains
$$h^{(1)} = -v\dx(\rho^\varepsilon\M)-E\dv(\rho^\varepsilon\M),$$
$$h^{(2)}=-\M\dt\rho^\varepsilon+v^2\M\dxx\rho^\varepsilon+\dx(E\rho^\varepsilon)v\dv\M+E\dx\rho^\varepsilon\dv(v\M)+E^2\rho^\varepsilon\dvv\M.$$
We replace $h^{(1)}$ and $h^{(2)}$ by their expressions in $h^{(3)}=-\dt h^{(1)} - \T(h^{(2)})$ to obtain:
\begin{equation*}\begin{aligned}
    h^{(3)} = 2\dtx\rho^\varepsilon v\M &+ 2E\dt\rho^\varepsilon\dv\M -\dxxx\rho^\varepsilon v^3\M -\dxx(E\rho^\varepsilon)v^2\dv\M\\ &-\dx(E\dx\rho^\varepsilon)v\dv(v\M) -\dx(E^2\rho^\varepsilon)v\dvv\M\\ &-E\dxx\rho^\varepsilon\dv(v^2\M) -E\dx(E\rho^\varepsilon)\dv(v\dv\M)\\ &-E^2\dx\rho^\varepsilon\dvv(v\M) - E^3\rho^\varepsilon\dvvv\M.
\end{aligned}\end{equation*}
Using the identity $\dv\M(v)=-v\M(v)$, one obtains
\begin{equation*}\begin{aligned}
    h^{(3)} = 2\dtx\rho^\varepsilon v\M &- 2E\dt\rho^\varepsilon v\M -\dxxx\rho^\varepsilon v^3\M +\dxx(E\rho^\varepsilon)v^3\M\\ &-\dx(E\dx\rho^\varepsilon)(v\M-v^3\M) -\dx(E^2\rho^\varepsilon)(-v\M+v^3\M)\\ &-E\dxx\rho^\varepsilon(2v\M-v^3\M) -E\dx(E\rho^\varepsilon)(-2v\M+v^3\M)\\ &-E^2\dx\rho^\varepsilon(-3v\M+v^3\M) - E^3\rho^\varepsilon(3v\M-v^3\M).
\end{aligned}\end{equation*}
We set $J^\varepsilon=(\dx\rho^\varepsilon-E\rho^\varepsilon)$ and $h^{(3)}$ rewrites as:
\begin{equation}\label{eq:g3DD}
\begin{aligned}
    h^{(3)} =&~2v\M\dt J^\varepsilon -v^3\M \dxx J^\varepsilon -(v\M-v^3\M)\dx(EJ)\\
    &-(2v\M-v^3\M)E\dx J^\varepsilon +(3v\M-v^3\M) E^2J.
\end{aligned}\end{equation}
We simplify \eqref{eq:CE_DD_tmp} by using the fact that $v\M$ and $vh^{(2)}$ are odd in $v$, obtaining
\begin{equation}\label{ToPDD}
    \dt\rho^\varepsilon+\dx\left\langle vh^{(1)}+\varepsilon^2vh^{(3)}\right\rangle=0.
\end{equation}
We have already shown in \eqref{intVH1} that $$\dx\left\langle vh^{(1)}\right\rangle= -m_2\dx\left(\dx\rho^\varepsilon-E\rho^\varepsilon\right)=-m_2\dx J^\varepsilon.$$ Therefore, \eqref{ToPDD} gives $\dt\rho^\varepsilon= m_2\dx J^\varepsilon + \mathcal{O}(\varepsilon^2)$. It follows that when $\varepsilon$ is small, $$\dt J^\varepsilon=m_2(\dxx J^\varepsilon-E\dx J^\varepsilon) + \mathcal{O}(\varepsilon^2).$$ We use this relation to replace the time derivative of $J^\varepsilon$ in \eqref{eq:g3DD} which gives:
\begin{equation*}\begin{aligned}
    h^{(3)} = &~2m_2v\M\dxx J^\varepsilon - 2m_2v\M E\dx J^\varepsilon -v^3\M \dxx J^\varepsilon -(v\M-v^3\M)\dx(EJ^\varepsilon)\\
    &-(2v\M-v^3\M)E\dx J^\varepsilon +(3v\M-v^3\M) E^2J^\varepsilon + \Oeps{2}.
\end{aligned}\end{equation*}
We can now compute the remaining integral:
\begin{equation*}\begin{aligned}
    \dx\left\langle vh^{(3)}\right\rangle = \dx\bigg[&2m_2^2\dxx J^\varepsilon - 2m_2^2 E\dx J^\varepsilon -m_4 \dxx J^\varepsilon -(m_2-m_4)\dx(EJ^\varepsilon)\\
    &-(2m_2-m_4)E\dx J^\varepsilon +(3m_2-m_4) E^2J^\varepsilon + \Oeps{2}\bigg].
\end{aligned}\end{equation*}
With our choice of $\M(v)$, we can explicitly compute $m_2=1$ and $m_4=3$. Therefore one has:
\begin{equation*}\begin{aligned}
    \dx\left\langle vh^{(3)}\right\rangle = \dx\bigg[ 2\dx(EJ^\varepsilon) - E\dx J^\varepsilon -\dxx J^\varepsilon  +\mathcal{O}(\varepsilon^2)\bigg].
\end{aligned}\end{equation*}
Finally, we obtain a higher order model in the drift-diffusion limit.
\begin{Proposition}
(formal) Let $f^\varepsilon$ be a solution of \eqref{VlasovBGK} with an initial data in $L^2(\frac{\dd x\dd v}{F})$. Assuming that $f^\varepsilon$ admits a Chapman-Enskog expansion of order $K=3$, the truncated model up to order 2 in $\varepsilon$ is given by a higher order drift-diffusion equation. The macroscopic density $\rho^\varepsilon=\langle\, f^\varepsilon\,\rangle$ is a solution of:
\begin{equation}\label{DDH}\tag{$\widetilde{DD}$}
    \dt\rho^\varepsilon-\dx J^\varepsilon + \varepsilon^2\dx(2\dx(EJ^\varepsilon) - E\dx J^\varepsilon -\dxx J^\varepsilon) = \mathcal{O}(\varepsilon^4),
\end{equation}   
where $\quad J^\varepsilon=(\dx\rho^\varepsilon-E\rho^\varepsilon)$.
\end{Proposition}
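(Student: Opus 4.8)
The plan is to read \eqref{DDH} as a consistency statement for the truncated ansatz \eqref{eq:CEorder3}, so the argument is a bookkeeping of parities and powers of $\varepsilon$ rather than a convergence proof. First I would integrate \eqref{VlasovBGK} against $\dd v$. Since $m_0=1$, the collision term is mean-free and drops out, leaving the exact continuity relation $\dt\rho^\varepsilon+\frac1\varepsilon\dx\langle vf^\varepsilon\rangle=0$. Substituting \eqref{eq:CEorder3} produces the singular term $\frac1\varepsilon\dx\langle v\rho^\varepsilon\M\rangle$, which vanishes because $v\M$ is odd in $v$; this is what removes the $1/\varepsilon$ contribution and yields \eqref{eq:CE_DD_tmp}.

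The second step is a parity observation that organizes the whole computation. Reading off the identification relations \eqref{eq:g1}--\eqref{eq:gk} together with $\dv\M=-v\M$, one checks inductively that $h^{(k)}$ has parity $(-1)^k$ in $v$: $h^{(1)}$ and $h^{(3)}$ are odd while $h^{(2)}$ is even. Consequently $vh^{(2)}$ is odd and $\langle vh^{(2)}\rangle=0$, which is precisely why the second order contributes nothing and the model closes at \eqref{ToPDD}. I would then invoke the already-established identity $\dx\langle vh^{(1)}\rangle=-m_2\dx J^\varepsilon$ from \eqref{intVH1} for the first-order flux, so that only $\langle vh^{(3)}\rangle$ remains to be evaluated.

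The heart of the proof is the closure of $h^{(3)}$. Its expression \eqref{eq:g3DD} contains the time derivative $\dt J^\varepsilon$, which must be eliminated to obtain a purely spatial PDE. I would use the leading-order balance $\dt\rho^\varepsilon=m_2\dx J^\varepsilon+\Oeps2$, coming from \eqref{ToPDD} and the first-order flux, to deduce $\dt J^\varepsilon=m_2(\dxx J^\varepsilon-E\dx J^\varepsilon)+\Oeps2$ and substitute it into \eqref{eq:g3DD}. The key order-tracking remark is that this replacement perturbs $h^{(3)}$ only at $\Oeps2$; since $h^{(3)}$ enters the moment equation with prefactor $\varepsilon^2$, the induced error is $\Oeps4$, matching the right-hand side of \eqref{DDH}. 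After the substitution the surviving terms are all odd monomials $v\M$ and $v^3\M$, so evaluating $\langle vh^{(3)}\rangle$ reduces to the even moments $m_2,m_4$; inserting the Gaussian values $m_2=1$ and $m_4=3$ and collecting terms produces the bracketed flux of \eqref{DDH}.

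The main obstacle, and the only genuinely delicate point, is the order bookkeeping in this closure step: one must confirm that replacing $\dt J^\varepsilon$ by its diffusive leading order, together with truncating the expansion at $K=3$, leaves a remainder no larger than $\Oeps4$. Everything else---the parity cancellations, the reduction of every term to $v\M$ and $v^3\M$ via $\dv\M=-v\M$, and the moment integrations---is routine algebra that I would carry out but do not expect to pose difficulties. Since the statement is explicitly \emph{formal}, I would not attempt to control the neglected tail of the Chapman--Enskog series rigorously; the claim is a consistency result for the truncated ansatz and nothing more.
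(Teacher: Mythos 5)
Your proposal is correct and follows essentially the same route as the paper: velocity integration with the odd-moment cancellation of the $1/\varepsilon$ term and of $\langle vh^{(2)}\rangle$, reuse of $\dx\langle vh^{(1)}\rangle=-m_2\dx J^\varepsilon$ from \eqref{intVH1}, elimination of $\dt J^\varepsilon$ in \eqref{eq:g3DD} via the leading-order balance $\dt J^\varepsilon=m_2(\dxx J^\varepsilon-E\dx J^\varepsilon)+\Oeps{2}$, and evaluation of the remaining moments with $m_2=1$, $m_4=3$. Your inductive parity argument for $h^{(k)}$ is a slightly cleaner justification of $\langle vh^{(2)}\rangle=0$ than the paper's explicit computation, but since $h^{(2)}$ must be computed anyway to obtain $h^{(3)}$, this is a cosmetic rather than substantive difference.
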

\section{Micro-Macro Model}\label{sec:MM}
In this part, we derive a micro-macro model for \eqref{VlasovBGK}. The micro-macro approach was first used to derive AP schemes for the radiative heat transfer in \cite{KlarSchmeiser2001}. It was applied to the Boltzmann equation in \cite{BennouneLemouMieussens2008, LemouMieussens2008} and to the Vlasov-Poisson-BGK equation in \cite{CrouseillesLemou2011}. We will then introduce a micro-macro finite volume scheme that enjoys the property of being Asymptotic Preserving which is a crucial point of the coupling method we want to introduce. 

\subsection{Continuous setting}
Let us decompose the distribution $f$ as follows:
\begin{equation}\label{rhoM+g}
    f^\varepsilon = \rho^\varepsilon\M+g^\varepsilon.
\end{equation}
We introduce the orthogonal projector $\Pi$ in $L^2(\dd x\dd\gamma)$ on $\mathcal{N}$ defined for all $f\in L^2(\dd x\dd\gamma)$ by:
$$\Pi f=\langle f\rangle\M.$$
To help us in the derivation of the micro-macro model, let us first introduce the following lemma:
\begin{Lemma}\label{lem:gMM}
    Let $f^\varepsilon=\rho^\varepsilon\M+g^\varepsilon$ be a solution of \eqref{VlasovBGK}. Therefore one has: $$\Pi(g^\varepsilon)=\Pi(\dt g^\varepsilon)=\Pi (\T(\rho^\varepsilon\M)) = (I-\Pi)(\dt(\rho^\varepsilon\M))=0.$$
    Moreover, one has $\Pi(\T g^\varepsilon) =\dx\langle vg^\varepsilon\rangle\M$.
\end{Lemma}
\begin{proof}
    Let us first show that $\Pi g^\varepsilon=0$ which means that $g^\varepsilon\in\mathcal{N}^\perp$. Using the definitions \eqref{rhoM+g} of $\rho^\varepsilon$ and $g^\varepsilon$, one has $$\Pi g^\varepsilon=\left\langle f^\varepsilon-\rho^\varepsilon\M \right\rangle\M=\left(\langle f^\varepsilon\rangle-\rho^\varepsilon\langle\M\rangle\right)\M=(\rho^\varepsilon-\rho^\varepsilon)\M=0.$$
    Next, using the fact that $\Pi g^\varepsilon=0$, it follows that $$\Pi\dt g^\varepsilon = \dt\Pi g^\varepsilon = 0.$$
    To show that $\Pi(\T(\rho\M))=0$, the definition of the transport operator $\mathcal{T}$ is used: $$\Pi(\T(\rho\M)) = \Pi(v\dx (\rho^\varepsilon\M)+E\dv (\rho\M)) =\Pi(v\dx (\rho^\varepsilon\M))=\dx\rho^\varepsilon\Pi(v\M)=\dx\rho^\varepsilon\langle v\M\rangle\M.$$ Recalling that the velocity domain of integration is symmetric and that $\M(v)$ is even in $v$, one gets that $$\Pi(\T(\rho^\varepsilon\M))=0.$$
    Let us show that $(I-\Pi)(\dt(\rho^\varepsilon\M))=0$. We use the fact that $\rho$ does not depend on the velocity:
    $$(I-\Pi)(\dt(\rho^\varepsilon\M))=\dt(\rho^\varepsilon\M)-\dt\rho^\varepsilon\langle\M\rangle\M=\dt(\rho^\varepsilon\M)-\dt(\rho^\varepsilon\M) = 0.$$
    Finally, $$\Pi(\T g^\varepsilon) = \langle v\dx g^\varepsilon + E\dv g^\varepsilon \rangle\M = \dx\langle vg^\varepsilon\rangle\M + E\langle\dv g^\varepsilon \rangle\M = \dx\langle vg^\varepsilon\rangle\M.$$
\end{proof}
\noindent To derive the micro-macro model, we start by injecting \eqref{rhoM+g} in \eqref{VlasovBGK}:  
\begin{equation}\label{eq:VlasovCE}
    \dt(\rho^\varepsilon\M)+\dt g^\varepsilon+\frac{1}{\varepsilon} (\T(\rho^\varepsilon\M)+\T g^\varepsilon)=\frac{-1}{\varepsilon^2} g^\varepsilon.
\end{equation}
\noindent We then apply $(I-\Pi)$ to \eqref{eq:VlasovCE}, and simplify using Lemma \ref{lem:gMM} to obtain the micro part of the model:
\begin{equation}\label{eq:micro}
    \dt g^\varepsilon+\frac{1}{\varepsilon}(\T g^\varepsilon-\Pi(\T g^\varepsilon)) + \frac{1}{\varepsilon}\T(\rho^\varepsilon\M)=\frac{-1}{\varepsilon^2}g^\varepsilon.
\end{equation}
The macro part is obtained by applying $\Pi$ to \eqref{eq:VlasovCE} and using Lemma \ref{lem:gMM}. It leads to:
\begin{equation}\label{eq:macro}
    \dt\rho^\varepsilon\M+\frac{1}{\varepsilon}\Pi(\T g^\varepsilon) = 0.
\end{equation}
Finally, the micro-macro model is given by:
\begin{equation}\label{Micro}\tag{$Micro$}
    \dt g+\frac{1}{\varepsilon}(\T g^\varepsilon - \dx\langle vg^\varepsilon\rangle\M + \T(\rho^\varepsilon\M)) =\frac{-1}{\varepsilon^2}g^\varepsilon,
\end{equation}
\begin{equation}\label{Macro}\tag{$Macro$}
    \dt\rho^\varepsilon+\frac{1}{\varepsilon}\dx \langle vg^\varepsilon\rangle=0.
\end{equation}
The following proposition states the equivalence between the \eqref{Micro}-\eqref{Macro} model and the original equation \eqref{VlasovBGK} \cite{CrouseillesLemou2011}.
\begin{Proposition}
(formal)
\begin{enumerate}
    \item If $f^\varepsilon$ is a solution to \eqref{VlasovBGK} with an initial data in $L^2(\dd x\dd\gamma)$, then $(\rho^\varepsilon, g^\varepsilon) = (\langle f^\varepsilon\rangle, f^\varepsilon-\langle f^\varepsilon\rangle\M)$ is a solution to \eqref{Micro}-\eqref{Macro} with the associated initial data $$\rho_0 = \langle f_0\rangle,\quad g_0 = f_0-\rho_0\M.$$
    \item Conversely, if $(\rho^\varepsilon, g^\varepsilon)$ is a solution to \eqref{Micro}-\eqref{Macro} with initial data $\rho^\varepsilon(t = 0) = \rho_0$ and $g^\varepsilon(t = 0) = g_0$ with $\langle g_0\rangle = 0$ then $\langle g^\varepsilon(t)\rangle = 0,$ for all $t > 0$ and $f^\varepsilon = \rho^\varepsilon\M + g^\varepsilon$ is a solution to \eqref{VlasovBGK} with initial data $f_0 = \rho_0\M +g_0.$
\end{enumerate}
\end{Proposition}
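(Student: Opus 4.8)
The plan is to prove the two implications separately, treating the first as essentially the derivation already carried out above the statement, and concentrating the real work on the converse.

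For the first item, I would set $\rho^\varepsilon = \langle f^\varepsilon\rangle$ and $g^\varepsilon = f^\varepsilon - \rho^\varepsilon\M$, so that $\langle g^\varepsilon\rangle = \langle f^\varepsilon\rangle - \rho^\varepsilon\langle\M\rangle = 0$ holds by construction, i.e. $\Pi g^\varepsilon = 0$. Substituting this decomposition into \eqref{VlasovBGK} reproduces \eqref{eq:VlasovCE}, and applying the projectors $I-\Pi$ and $\Pi$ together with Lemma \ref{lem:gMM} yields exactly \eqref{eq:micro} and \eqref{eq:macro}, that is the \eqref{Micro}--\eqref{Macro} system. The initial data follow by evaluating the definitions at $t=0$: $\rho^\varepsilon(0) = \langle f_0\rangle = \rho_0$ and $g^\varepsilon(0) = f_0 - \rho_0\M = g_0$. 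This direction is therefore a direct transcription of the computation preceding the statement.

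The converse is where the argument has content, and its crux is establishing that the zero-mean constraint propagates in time. I would integrate the micro equation \eqref{Micro} against $\dd v$. The transport contributions cancel: using $\langle\M\rangle = m_0 = 1$ one has $\langle\T g^\varepsilon\rangle - \dx\langle vg^\varepsilon\rangle\langle\M\rangle = \dx\langle vg^\varepsilon\rangle + E\langle\dv g^\varepsilon\rangle - \dx\langle vg^\varepsilon\rangle = 0$, where $\langle\dv g^\varepsilon\rangle = 0$ because $g^\varepsilon$ vanishes as $|v|\to\infty$; similarly $\langle\T(\rho^\varepsilon\M)\rangle = \dx\rho^\varepsilon\langle v\M\rangle + E\rho^\varepsilon\langle\dv\M\rangle = 0$ by oddness and decay of $\M$. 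What remains is the scalar ODE $\dt\langle g^\varepsilon\rangle = -\varepsilon^{-2}\langle g^\varepsilon\rangle$ at each fixed $x$, whose solution is $\langle g^\varepsilon(t)\rangle = e^{-t/\varepsilon^2}\langle g_0\rangle$. Since $\langle g_0\rangle = 0$, this gives $\langle g^\varepsilon(t)\rangle = 0$ for all $t>0$.

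With the constraint secured, I would reconstruct \eqref{VlasovBGK} by summing the micro and macro equations. Multiplying \eqref{Macro} by $\M$ recovers $\dt(\rho^\varepsilon\M) + \varepsilon^{-1}\dx\langle vg^\varepsilon\rangle\M = 0$; adding this to \eqref{eq:micro} makes the two occurrences of $\Pi(\T g^\varepsilon) = \dx\langle vg^\varepsilon\rangle\M$ cancel, leaving $\dt(\rho^\varepsilon\M + g^\varepsilon) + \varepsilon^{-1}\T(\rho^\varepsilon\M + g^\varepsilon) = -\varepsilon^{-2}g^\varepsilon$. Setting $f^\varepsilon = \rho^\varepsilon\M + g^\varepsilon$ and invoking $\langle g^\varepsilon\rangle = 0$ to identify $\langle f^\varepsilon\rangle = \rho^\varepsilon$, the right-hand side becomes $\varepsilon^{-2}(\langle f^\varepsilon\rangle\M - f^\varepsilon)$, which is precisely the BGK collision term; the initial condition is $f^\varepsilon(0) = \rho_0\M + g_0 = f_0$. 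The main obstacle, as anticipated, is the propagation of $\langle g^\varepsilon\rangle = 0$: it is exactly this step that ensures $\rho^\varepsilon$ is the genuine density of the reconstructed $f^\varepsilon$, without which the summed equation would fail to reproduce the collision operator. Throughout, the ``formal'' label lets me take the vanishing boundary terms in $v$ and the required regularity for granted.
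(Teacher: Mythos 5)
Your proposal is correct. The paper itself does not prove this proposition (it defers to the cited reference and the derivation of \eqref{Micro}--\eqref{Macro} preceding the statement, which is exactly your item~1), and your treatment of the converse --- integrating \eqref{Micro} in velocity to get $\dt\langle g^\varepsilon\rangle=-\varepsilon^{-2}\langle g^\varepsilon\rangle$ so that $\langle g^\varepsilon(t)\rangle=0$ propagates, then summing the micro equation with $\M$ times the macro equation to recover \eqref{VlasovBGK} --- is the standard argument and correctly identifies the propagation of the zero-mean constraint as the one step with real content.
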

\subsection{Discrete setting}\label{sec:DiscSetting}
Let us now tackle the discretization of the \eqref{Micro}-\eqref{Macro} model. We adopt a finite volume approach to discretize in the phase-space. 
\paragraph{The mesh.}We restrict the velocity domain to a bounded symmetric segment $[-v_\star,v_\star]$ as it is impractical to implement a numerical scheme on an unbounded domain. We consider a mesh of the interval composed of $N_v=2L$ velocity cells arranged symmetrically around $v=0$. We thus obtain $2L+1$ interface points that we label $v_\jpd$ with $j=-L,\dots,L$. Therefore: 
$$v_{-L+\frac{1}{2}}=-v_\star, \quad v_{\frac{1}{2}}=0,\quad v_\jpd=-v_{-\jpd} \quad\forall j=0,\dots,L.$$
The cells of the velocity mesh are given by 
$$\mathcal{V}_j=(v_\jmd,v_\jpd),\quad j\in \mathcal{J}=\lbrace -L+1,\dots,L \rbrace.$$
Each cell $\mathcal{V}_j$ has a constant length $\Delta v$ and midpoint $v_j$. The velocity mesh is illustrated in Figure~\ref{fig:VelDisc}.\\
\begin{figure}
    \centering
    \begin{tikzpicture}[line cap=round,line join=round,>=triangle 45,x=1.4605733794804243cm,y=1.407896962515574cm]
\definecolor{ttqqqq}{rgb}{0.2,0.,0.}
\definecolor{qqttcc}{rgb}{0.,0.2,0.8}
\definecolor{ffqqtt}{rgb}{1.,0.,0.2}
\clip(2.0,3.0) rectangle (13.0,5.0);
\draw [line width=1.6pt,color=qqttcc] (3.5,4.)-- (4.,4.);
\draw [line width=1.6pt,dash pattern=on 4pt off 4pt,color=qqttcc] (4.,4.)-- (6.,4.);
\draw [line width=1.6pt,color=qqttcc] (6.,4.)-- (7.,4.);
\draw [line width=1.6pt,color=qqttcc] (7.,4.)-- (8.,4.);
\draw [line width=1.6pt,color=qqttcc] (8.,4.)-- (9.,4.);
\draw [line width=1.6pt,dash pattern=on 4pt off 4pt,color=qqttcc] (9.,4.)-- (11.,4.);
\draw [line width=1.6pt,color=qqttcc] (11.,4.)-- (11.5,4.);
\draw [line width=1.2pt,dash pattern=on 2pt off 2pt] (7.5,4.5)-- (8.5,4.5);
\draw [line width=1.2pt,dash pattern=on 2pt off 2pt] (7.,3.5)-- (8.,3.5);
\draw (7.36,4.4) node[anchor=north west] {$0$};
\draw (7.748048297914481,4.892080548015483) node[anchor=north west] {$\Delta v_1$};
\draw (7.304785568887788,3.4736398151300825) node[anchor=north west] {$\Delta v_{\frac{1}{2}}$};
\draw (3.2,4.4) node[anchor=north west] {$-v_\star$};
\draw (11.3,4.4) node[anchor=north west] {$v_\star$};
\draw (5.7,4.4) node[anchor=north west] {$v_{-1}$};
\draw (6.8,4.4) node[anchor=north west] {$v_{0}$};
\draw (7.8,4.4) node[anchor=north west] {$v_{1}$};
\draw (8.8,4.4) node[anchor=north west] {$v_{2}$};
\draw (10.8,4.4) node[anchor=north west] {$v_{L}$};
\draw (3.8,4.4) node[anchor=north west] {$v_{-L+1}$};
\draw (3.15,4.0) node[anchor=north west] {$v_{-L+\frac{1}{2}}$};
\draw (11.320745893869624,4.0) node[anchor=north west] {$v_{L+\frac{1}{2}}$};
\draw (6.3562033287706665,4.0) node[anchor=north west] {$ v_{-\frac{1}{2}}$};
\draw (7.4466296421763305,4.0) node[anchor=north west] {$ v_{\frac{1}{2}}$};
\draw (8.430672900615589,4.0) node[anchor=north west] {$ v_{\frac{3}{2}}$};
\draw (3.297690498486487,3.7) node[anchor=north west] {$ =v_{-L}$};
\draw (11.294150130128022,3.7) node[anchor=north west] {$ =v_{L+1}$};
\begin{scriptsize}
\draw [fill=black] (3.5,4.) circle (1.5pt);
\draw [fill=ffqqtt] (4.,4.) ++(-2.0pt,0 pt) -- ++(2.0pt,2.0pt)--++(2.0pt,-2.0pt)--++(-2.0pt,-2.0pt)--++(-2.0pt,2.0pt);
\draw [fill=ffqqtt] (6.,4.) ++(-2.0pt,0 pt) -- ++(2.0pt,2.0pt)--++(2.0pt,-2.0pt)--++(-2.0pt,-2.0pt)--++(-2.0pt,2.0pt);
\draw [fill=ffqqtt] (7.,4.) ++(-2.0pt,0 pt) -- ++(2.0pt,2.0pt)--++(2.0pt,-2.0pt)--++(-2.0pt,-2.0pt)--++(-2.0pt,2.0pt);
\draw [fill=ffqqtt] (8.,4.) ++(-2.0pt,0 pt) -- ++(2.0pt,2.0pt)--++(2.0pt,-2.0pt)--++(-2.0pt,-2.0pt)--++(-2.0pt,2.0pt);
\draw [fill=ffqqtt] (9.,4.) ++(-2.0pt,0 pt) -- ++(2.0pt,2.0pt)--++(2.0pt,-2.0pt)--++(-2.0pt,-2.0pt)--++(-2.0pt,2.0pt);
\draw [fill=ffqqtt] (11.,4.) ++(-2.0pt,0 pt) -- ++(2.0pt,2.0pt)--++(2.0pt,-2.0pt)--++(-2.0pt,-2.0pt)--++(-2.0pt,2.0pt);
\draw [fill=black] (11.5,4.) circle (1.5pt);
\draw [fill=black] (6.5,4.) circle (1.5pt);
\draw [fill=black] (7.51822482357297,4.) circle (1.5pt);
\draw [fill=black] (8.5,4.) circle (1.5pt);
\draw [fill=ttqqqq,shift={(7.5,4.5)},rotate=90] (0,0) ++(0 pt,2.25pt) -- ++(1.9485571585149868pt,-3.375pt)--++(-3.8971143170299736pt,0 pt) -- ++(1.9485571585149868pt,3.375pt);
\draw [fill=ttqqqq,shift={(8.5,4.5)},rotate=270] (0,0) ++(0 pt,2.25pt) -- ++(1.9485571585149868pt,-3.375pt)--++(-3.8971143170299736pt,0 pt) -- ++(1.9485571585149868pt,3.375pt);
\draw [fill=ttqqqq,shift={(7.,3.5)},rotate=90] (0,0) ++(0 pt,2.25pt) -- ++(1.9485571585149868pt,-3.375pt)--++(-3.8971143170299736pt,0 pt) -- ++(1.9485571585149868pt,3.375pt);
\draw [fill=ttqqqq,shift={(8.,3.5)},rotate=270] (0,0) ++(0 pt,2.25pt) -- ++(1.9485571585149868pt,-3.375pt)--++(-3.8971143170299736pt,0 pt) -- ++(1.9485571585149868pt,3.375pt);
\end{scriptsize}
\end{tikzpicture}
    \caption{Discretization of the velocity domain.}
    \label{fig:VelDisc}
\end{figure}

\noindent In space, because of the periodic boundary conditions, we consider a discretization of the 1-D torus $\mathbb{T}$ into $N_x$ primal cells $$\mathcal{X}_i=(x_\imd,x_\ipd), \quad i\in\mathcal{I}=\mathbb{Z}/N_x\mathbb{Z},$$ of constant length $\Delta x$ and centers $x_i$. We also define the dual cells $$\mathcal{X}_\ipd=(x_i,x_{i+1}), \quad i\in\mathcal{I},$$ of constant length $\Delta x$ and centers $x_\ipd$.
The primal control volumes in the phase space are defined by 
$$K_{ij}= \mathcal{X}_i\times\mathcal{V}_j,\quad\forall(i,j)\in\mathcal{I}\times\mathcal{J}.$$
The dual control volumes are defined by 
$$K_{\ipd,j}= \mathcal{X}_\ipd\times\mathcal{V}_j,\quad\forall(i,j)\in\mathcal{I}\times\mathcal{J}.$$
Finally, we set a time step $\Delta t>0$ and we define $t^n=n\Delta t,\quad n\in\mathbb{N}$.

\paragraph{The discrete Maxwellian.} We assume that we are given cell values $(M_j)_{j\in\mathcal{J}}$. We then assume that:
\begin{equation}\label{eq:hypDiscMaxw}
    \left\lbrace\begin{aligned}
    &M_j>0,\quad M_j=M_{-j+1} \quad\forall j=1,\dots,L\,,\\
    &\sum_{j\in\mathcal{J}}M_j\Delta v_j = m_0^{\Delta v}=1,\\
    &M_{L}=M_{L+1},\quad M_{-L}=M_{-L+1}.
    \end{aligned}\right.
\end{equation}
These assumptions are a discrete version of the ones we made in the continuous case, namely, the positivity, the fact that $\M$ is even in velocity, and the unit mass. The last assumption is simply a zero boundary flux. For a sufficiently large domain in velocity, it is relevant due to the fast decay of the Gaussian. The unit mass can easily be obtained by taking $M_j=c\M(v_j)$, $j\in\mathcal{J}$ where $c$ is an appropriate constant. Let us introduce the discrete integration operator in velocity: for $f=\left(f_j\right)_{j\in\mathcal{J}}$
\begin{equation}
    \langle f\rangle_\Delta=\sum_{j\in\mathcal{J}}f_j\Delta v.
\end{equation}
Finally, we introduce the discrete moments of the discrete Maxwellian \eqref{eq:hypDiscMaxw}:
\begin{equation}\label{discMoments}
    m_k^{\Delta v}=\langle v^kM \rangle_\Delta=\sum_{j\in\mathcal{J}}v_j^kM_j\Delta v.
\end{equation}
\paragraph{Semi-discretization in the phase-space.}
We start by considering a semi-discretization in the phase-space of the \eqref{Micro}-\eqref{Macro} model. We choose to approximate the perturbation $g^\varepsilon$ on the dual cells while the density $\rho^\varepsilon$ is approximated on the primal mesh. This choice of staggered meshes will result in a more compact stencil for the asymptotic scheme and is quite standard in the literature \cite{LemouMieussens2008,LiuMieusens2010, CrouseillesLemou2011}. 

\noindent Let $(i,j)\in \mathcal{I}\times\mathcal{J}$. We start start by integrating \eqref{Macro} on $\mathcal{X}_{i}$:
$$\int_{\mathcal{X}_{i}}\left[\dt\rho^\varepsilon + \frac{1}{\varepsilon}\dx\langle vg^\varepsilon\rangle\right]\dd x=0.$$
Let $\rho^\varepsilon_i(t)$ be an approximation of $\frac{1}{\Delta x}\int_{\mathcal{X}_{i}} \rho^\varepsilon(t,x)\dd x$. After integrating the space derivative, one then obtains a continuous in time finite volume scheme for \eqref{Macro}:
\begin{equation}\label{MacroSpace}
    \frac{\dd}{\dd t}\rho^\varepsilon_i = \frac{1}{\Delta x}\left(J_\ipd^K-J_\imd^K \right),
\end{equation}
where the macroscopic flux $\left(J^K_\ipd\right)_{i\in\mathcal{I}}$ is approximated by
\begin{equation}\label{MacroFluxCont}
    J^K_\ipd = -\frac{1}{\varepsilon}\langle vg^\varepsilon_\ipd \rangle_\Delta.
\end{equation}
Next, we deal with the \eqref{Micro} equation which is integrated on $K_{\ipd,j}$:
$$\int_{K_{\ipd,j}}\dt g^\varepsilon\dd x\dd v + \frac{1}{\varepsilon}\int_{K_{\ipd,j}} \left[\T(g^\varepsilon)-\dx \langle vg^\varepsilon\rangle\M+\T(\rho^\varepsilon\M) \right]\dd x\dd v=\frac{-1}{\varepsilon^2}\int_{K_{\ipd,j}} g^\varepsilon\dd x\dd v.$$
Let $g^\varepsilon_{\ipd,j}(t)$ be an approximation of $\frac{1}{\Delta x\Delta v}\int_{K_{\ipd,j}} g^\varepsilon(t,x,v)\dd x\dd v.$ One then obtains
\begin{equation}
\begin{aligned}
    -\varepsilon\Delta x\Delta v\left(\frac{\dd}{\dd t} g^\varepsilon_{\ipd,j} + \frac{1}{\varepsilon^2} g^\varepsilon_{\ipd,j}\right) = & \underbrace{\int_{K_{\ipd,j}} \T(g^\varepsilon)\dd x\dd v}_\text{\circled{A}} - \underbrace{\int_{K_{\ipd,j}} \dx \langle vg^\varepsilon\rangle\M\dd x\dd v}_\text{\circled{B}}\\
    &+ \underbrace{\int_{K_{\ipd,j}}\T(\rho^\varepsilon\M) \dd x\dd v}_\text{\circled{C}}.        
\end{aligned}
\end{equation}
$$ $$
Using the definition of the transport operator $\T$, one has:
\begin{equation*}
    \begin{aligned}
    \circled{A} &= \int_{\mathcal{V}_j} v\big(g^\varepsilon(t,x_{i+1},v)-g^\varepsilon(t,x_{i},v)\big)\dd v + \int_{\mathcal{X}_\ipd} E\big(g^\varepsilon(t,x,v_\jpd)-g^\varepsilon(t,x,v_\jmd)\big)\dd x,\\
    \circled{B} &= \int_{\mathcal{V}_j} \M\big(\langle vg^\varepsilon(t,x_{i+1},v)\rangle-\langle vg^\varepsilon(t,x_{i},v)\rangle\big)\dd v,\\
    \circled{C} &= \int_{\mathcal{V}_j} v\M\big(\rho^\varepsilon(t,x_{i+1})-\rho^\varepsilon(t,x_{i})\big)\dd v + \int_{\mathcal{X}_\ipd} E\rho^\varepsilon\big(\M(v_\jpd)-\M(v_\jmd)\big)\dd x.
    \end{aligned}
\end{equation*}
We now denote by $\left(\mathcal{F}^\varepsilon_{i,j}\right)_{ij}$ an approximation of the microscopic flux in space at interfaces $\left(x_i\right)_i$, namely
\begin{equation}\label{SpaceFluxIntegral}
    \mathcal{F}^\varepsilon_{i,j} \approx \int_{\mathcal{V}_j} \left[vg^\varepsilon(t,x_{i},v) - \M\langle vg^\varepsilon(t,x_{i},v)\rangle + v\M\rho^\varepsilon(t,x_{i})\right]\dd v.
\end{equation}
We also denote by $\left(\mathcal{G}^\varepsilon_{\ipd,\jpd}\right)_{ij}$ an approximation of the microscopic flux in velocity, namely
\begin{equation}\label{VelocityFluxIntegral}
    \mathcal{G}^\varepsilon_{\ipd,\jpd} \approx \int_{\mathcal{X}_\ipd} \left[Eg^\varepsilon(t,x,v_\jpd) + E\rho^\varepsilon\M(v_\jpd) \right]\dd x.
\end{equation}
Let us now present our choice of numerical fluxes. Let us first focus on the position. We choose a first-order upwind approximation for the first two terms of \eqref{SpaceFluxIntegral}. The use of staggered grids allows us to directly use the value $\rho^{\varepsilon}_i$ at interface $x_i$ for the last term. Summarizing, the numerical flux in position is given by:
\begin{equation}\label{FluxPosition}
    \mathcal{F}_{i,j}^{\varepsilon} = \left(v^{+}_jg_{\imd,j}^{\varepsilon, n}+v^{-}_jg_{\ipd,j}^{\varepsilon}\right)\Delta v - M_j\left\langle v^{+}g_{\imd}^{\varepsilon}+v^{-}g_{\ipd}^{\varepsilon}\right\rangle_\Delta\Delta v + v_jM_j\rho^{\varepsilon}_i\Delta v,\\
\end{equation}
where the notation $r^\pm=\frac{r\pm|r|}{2}$ is used. At the boundaries in position, the periodic setting implies
\begin{equation}
    \mathcal{F}_{0,j}^{\varepsilon} = \mathcal{F}_{N_x,j}^{\varepsilon}.
\end{equation}
In velocity, a first-order upwind approximation is used for the first term of \eqref{VelocityFluxIntegral}. The second term is a centered approximation of the discrete Maxwellian at interface $v_\jpd$. Since $E$ is given, it is explicitly discretized on the dual mesh, $E_\ipd=E(x_\ipd)$ and $\rho(x_\ipd)$ is approximated by 
\begin{equation}\label{RhoHalf}
    \rho^\varepsilon_\ipd=\frac{1}{2}(\rho^\varepsilon_i+\rho^\varepsilon_{i+1}).
\end{equation}
The numerical flux in velocity then read:
\begin{equation}\label{FluxVelocity}
    \mathcal{G}_{\ipd,\jpd}^{\varepsilon} = \left(E^{+}_{\ipd}g_{\ipd,j}^{\varepsilon, n}+E^{-}_{\ipd}g_{\ipd,j+1}^{\varepsilon}\right)\Delta x + E_\ipd\rho^{\varepsilon}_\ipd\frac{M_{j+1}+M_j}{2}\Delta x.
\end{equation}
Zero flux boundary conditions are applied in velocity and therefore we set
\begin{equation}
    \mathcal{G}_{\ipd,-L+\frac{1}{2}}^{\varepsilon}=\mathcal{G}_{\ipd,L+\frac{1}{2}}^{\varepsilon}=0.
\end{equation}
A continuous in time finite volume scheme for equation \eqref{Micro} finally reads:
\begin{equation}\label{MicroPhase}
    \frac{\dd}{\dd t} g^\varepsilon_{\ipd,j} + \frac{1}{\varepsilon^2} g^\varepsilon_{\ipd,j} = \frac{-T^\varepsilon_{\ipd,j}}{\varepsilon\Delta x\Delta v},
\end{equation}
where $T^\varepsilon_{\ipd,j}=\mathcal{F}^\varepsilon_{i+1,j}-\mathcal{F}^\varepsilon_{i,j} + \mathcal{G}^\varepsilon_{\ipd,\jpd} - \mathcal{G}^\varepsilon_{\ipd,\jmd}$.
\paragraph{Full discretization.}
In order to obtain an AP scheme, one must carefully choose the discretization in time. Following \cite{Lemou2010}, we adapt the so-called relaxed micro-macro scheme to our finite volume setting. This method falls into the framework of exponential time integrators \cite{HochbruckOstermann2010}.
Let $n\in\mathbb{N}$ and $(i,j)\in \mathcal{I}\times\mathcal{J}$. Let $\left(g_{\ipd,j}^{\varepsilon, n}\right)_{ij}$ be an approximation of $\left(g_{\ipd,j}^\varepsilon(t^n)\right)_{ij}$ and $\left(\rho^{\varepsilon,n}_i\right)_{i}$ an approximation of $\left(\rho^\varepsilon_i(t^n)\right)_{i}$. The first step is to multiply \eqref{MicroPhase} by $e^{t/\varepsilon^2}$ which gives:
\begin{equation}
    \frac{\dd}{\dd t}\left(g^\varepsilon_{\ipd,j}(t)e^{t/\varepsilon^2}\right) = \frac{-e^{t/\varepsilon^2}}{\varepsilon\Delta x\Delta v}T^\varepsilon_{\ipd,j}(t).
\end{equation}
Let us then integrate between $t^n$ and $t^{n+1}$ and divide by $e^{t^{n+1}/\varepsilon^2}$:
\begin{equation}
    g^\varepsilon_{\ipd,j}(t^{n+1}) = g^\varepsilon_{\ipd,j}(t^{n})e^{-\Delta t/\varepsilon^2} +  \int_{t^n}^{t^{n+1}}\frac{-e^{(t-t^{n+1})/\varepsilon^2}}{\varepsilon\Delta x\Delta v}T^\varepsilon_{\ipd,j}(t)\dd t.
\end{equation}
Then, $T^\varepsilon_{\ipd,j}(t)$ is approximated by $T^{\varepsilon,n}_{\ipd,j}=T^\varepsilon_{\ipd,j}(t^n)$ and the integral can be computed explicitly:
\begin{equation}
    \int_{t^n}^{t^{n+1}}\frac{-e^{(t-t^{n+1})/\varepsilon^2}}{\varepsilon\Delta x\Delta v}T^\varepsilon_{\ipd,j}(t)\dd t = \frac{-\varepsilon(1-e^{-\Delta t/\varepsilon^2})}{\Delta x\Delta v}T^{\varepsilon,n}_{\ipd,j}.
\end{equation}
Finally, the fully discretized microscopic equation reads:
\begin{equation}\label{MicroFull}
    g_{\ipd,j}^{\varepsilon,n+1} = g_{\ipd,j}^{\varepsilon,n}e^{-\Delta t/ \varepsilon^2} - \varepsilon(1-e^{-\Delta t/ \varepsilon^2})\frac{T^{\varepsilon,n}_{\ipd,j}}{\Delta x\Delta v},
\end{equation}
where 
\begin{equation}\label{TransportFull}
    T^{\varepsilon,n}_{\ipd,j}=\mathcal{F}^{\varepsilon,n}_{i+1,j}-\mathcal{F}^{\varepsilon,n}_{i,j} + \mathcal{G}^{\varepsilon,n}_{\ipd,\jpd} - \mathcal{G}^{\varepsilon,n}_{\ipd,\jmd}.
\end{equation}
The discretization in time of \eqref{MacroSpace} is quite standard and the stiff term is implicit:
\begin{equation}\label{rhoTime}
    \rho_i^{\varepsilon,n+1} = \rho_i^{\varepsilon,n} + \frac{\Delta t}{\Delta x}\left( J^{K,n+1}_\ipd-J^{K,n+1}_\imd \right).
\end{equation}
Note that \eqref{MicroFull} defines an explicit scheme. Moreover, \eqref{rhoTime} does not requires the inversion of a system. Indeed, \eqref{MicroFull} is explicitly computed at time $t^{n+1}$ and is then used to update the density in \eqref{rhoTime}. In practice, the method is therefore fully explicit.

\noindent Before stating the next proposition, let us introduce the following assumption: 
\begin{Assumption}\label{H0}
Let $\left(\rho^{\varepsilon,n}_i\right)_{i\in\mathcal{I}}$ be given by \eqref{rhoTime}. Then, 
\begin{equation}
 \quad\rho^{\varepsilon,n}_i\longrightarrow\rho_i^n,\quad\forall i\in\mathcal{I}.
\end{equation}
\end{Assumption}
\noindent This assumption corresponds to the convergence of $\rho^\varepsilon$ to $\rho$ as $\varepsilon\rightarrow0$. Such property is not trivial to obtain in the discrete setting. A rigorous proof of this result requires, among other things, uniform estimates in $\varepsilon$ of the discrete $L^2$-norm of $\rho$, $g$, and moments of $g$. It is outside the scope of this article and may be thoroughly investigated in upcoming work.

\noindent The following proposition states the AP property of our discretization of the \eqref{Micro}-\eqref{Macro} model.
\begin{Proposition}\label{prop:AP}
    Let $n\in \mathbb{N}$. Let $\left(g_{\ipd,j}^{\varepsilon,n}\right)_{ij}$ and $\left(\rho^{\varepsilon,n}_i\right)_{i}$ be given by the following micro-macro finite volume scheme:
    \begin{equation}\label{S_micro}\tag{$S_{Micro}$}
        g_{\ipd,j}^{\varepsilon,n+1} = g_{\ipd,j}^{\varepsilon,n}e^{-\Delta t/ \varepsilon^2} - \varepsilon(1-e^{-\Delta t/ \varepsilon^2})\frac{T^{\varepsilon,n}_{\ipd,j}}{\Delta x\Delta v},
    \end{equation}
    \begin{equation}\label{S_macro}\tag{$S_{Macro}$}
    \begin{aligned}
        \rho_i^{\varepsilon,n+1} = \rho_i^{\varepsilon,n} + \frac{\Delta t}{\Delta x}\left( J^{K,n+1}_\ipd-J^{K,n+1}_\imd \right),
    \end{aligned}
    \end{equation}
    where the macroscopic flux is given by \eqref{MacroFluxCont} and $T^{\varepsilon,n}_{\ipd,j}$ by \eqref{TransportFull}.\\
    Assuming that $\left(\rho_i\right)_{i\in\mathcal{I}}$ satisfies Assumption \ref{H0} and for a fixed mesh size $\Delta x$, $\Delta v>0$, the scheme enjoys the AP property in the diffusion limit. This property does not depend on the initial data and the associated limit scheme reads
    \begin{equation}\label{S_lim}\tag{$S_{Lim}$}
        \rho_i^{n+1} = \rho_i^{n} + \frac{\Delta t}{\Delta x}\left(J_\ipd^{F,n}-J_\imd^{F,n}\right),
    \end{equation}
    with the limit flux
    \begin{equation}\label{LimFlux}
        J_\ipd^{F,n} = \frac{m_2^{\Delta v}}{\Delta x}\left(\rho^n_{i+1}-\rho^n_{i}\right) + m_1'^{\Delta v} E_\ipd\rho^n_\ipd,
    \end{equation}
    where $m_2^{\Delta v}$ is given by \eqref{discMoments} and $m_1'^{\Delta v}$ is an approximation of the first moment of the derivative of the Maxwellian $m'_1$:
    \begin{equation}\label{m1Prime}
        m_1'^{\Delta v} = \left\langle v\frac{M_{\cdot+1}-M_{\cdot-1}}{2\Delta v}\right\rangle_\Delta.
    \end{equation}
\end{Proposition}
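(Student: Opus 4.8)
The plan is to let $\varepsilon\to0$ directly in \eqref{S_micro}--\eqref{S_macro} and to check that the resulting update of $\rho$ reduces to \eqref{S_lim}. The key structural feature is that the exponential integrator separates two scales: since $e^{-\Delta t/\varepsilon^2}$ decays faster than any power of $\varepsilon$, one has $e^{-\Delta t/\varepsilon^2}\to0$ and $\varepsilon^{-1}e^{-\Delta t/\varepsilon^2}\to0$, whereas $\varepsilon(1-e^{-\Delta t/\varepsilon^2})\sim\varepsilon$. Evaluating \eqref{S_micro} at the first step with the fixed, $\varepsilon$-independent data $g^{\varepsilon,0}=g_0$ shows $g^{\varepsilon,1}=\mathcal{O}(\varepsilon)$, and an induction on $n$ gives $g^{\varepsilon,n}\to0$ for every $n\geq1$ regardless of $g_0$; this is precisely the claimed independence of the limit from the initial data.

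Since the macroscopic flux \eqref{MacroFluxCont} carries a factor $1/\varepsilon$, the object that matters is the rescaling $\varepsilon^{-1}g^{\varepsilon,n}$ rather than $g^{\varepsilon,n}$ itself. Dividing \eqref{S_micro} by $\varepsilon$ gives
\[
\frac{g^{\varepsilon,n+1}_{\ipd,j}}{\varepsilon}=\frac{e^{-\Delta t/\varepsilon^2}}{\varepsilon}\,g^{\varepsilon,n}_{\ipd,j}-(1-e^{-\Delta t/\varepsilon^2})\frac{T^{\varepsilon,n}_{\ipd,j}}{\Delta x\,\Delta v}.
\]
The first term vanishes in the limit, and since $g^{\varepsilon,n}\to0$ the only surviving contributions in $T^{\varepsilon,n}$ are those built on $\rho^{\varepsilon,n}$, which converges to $\rho^n$ by Assumption \ref{H0}. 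I would therefore read off $T^{(0),n}_{\ipd,j}:=\lim_{\varepsilon\to0}T^{\varepsilon,n}_{\ipd,j}$ from \eqref{FluxPosition} and \eqref{FluxVelocity}: the position fluxes retain $v_jM_j(\rho^n_{i+1}-\rho^n_i)\Delta v$ and the velocity fluxes retain the centered Maxwellian differences $E_\ipd\rho^n_\ipd\,\tfrac{M_{j+1}-M_{j-1}}{2}\Delta x$, which yields $\lim_{\varepsilon\to0}\varepsilon^{-1}g^{\varepsilon,n+1}_{\ipd,j}=-T^{(0),n}_{\ipd,j}/(\Delta x\,\Delta v)$.

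It then remains to insert this limit into \eqref{MacroFluxCont} and to recognize the discrete moments. Multiplying by $v_j$ and summing, the position part reconstructs $\langle v^2M\rangle_\Delta=m_2^{\Delta v}$ and gives the diffusive flux $\tfrac{m_2^{\Delta v}}{\Delta x}(\rho^n_{i+1}-\rho^n_i)$, while the velocity part, after a discrete summation by parts in $j$, reconstructs \eqref{m1Prime} and gives the drift flux $m_1'^{\Delta v}E_\ipd\rho^n_\ipd$. Hence $J^{K,n+1}_\ipd\to J^{F,n}_\ipd$, and feeding this into \eqref{S_macro} produces exactly \eqref{S_lim}. Combined with the consistency with the drift--diffusion model established in Section \ref{sec:CE}, and with the fact that the exact treatment of the stiff relaxation removes any $\varepsilon$-dependent restriction on $\Delta t$, this is what the AP property amounts to here.

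The main obstacle is twofold, and both points live at the velocity boundary. First, the rescaled convergence $\varepsilon^{-1}g^{\varepsilon,n+1}\to-T^{(0),n}/(\Delta x\,\Delta v)$ is a $0/0$ statement: making it rigorous requires controlling $g^{\varepsilon,n}$ and $\varepsilon^{-1}g^{\varepsilon,n}$ simultaneously by induction, leaning on Assumption \ref{H0} for $\rho^\varepsilon$ and on the boundedness of the discrete moments. Second, the summation by parts that turns the centered velocity flux into $m_1'^{\Delta v}$ generates boundary contributions at $j=\pm L$; closing the identity requires the zero-flux conditions $\mathcal{G}^{\varepsilon,n}_{\ipd,-L+\frac12}=\mathcal{G}^{\varepsilon,n}_{\ipd,L+\frac12}=0$ together with the symmetry $M_j=M_{-j+1}$ and the ghost relations $M_L=M_{L+1}$, $M_{-L}=M_{-L+1}$. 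These boundary terms are of size $\mathcal{O}(M_L)$, hence negligible once the velocity domain is large enough that the Maxwellian has decayed --- precisely the regime in which the discrete flux matches \eqref{LimFlux}.
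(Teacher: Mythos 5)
Your argument is correct and follows essentially the same route as the paper: an induction showing $g^{\varepsilon,n}\to0$ for any initial data, a splitting of $T^{\varepsilon,n}$ into its $g$-dependent and $\rho$-dependent parts (the paper's $Q$ and $P$ in \eqref{P+Q}), and passage to the limit in the rescaled flux $-\varepsilon^{-1}\langle v g^{\varepsilon,n+1}\rangle_\Delta$ using Assumption \ref{H0}. One small simplification: no discrete summation by parts is needed to produce the drift term, since the centered difference $\tfrac{M_{j+1}-M_{j-1}}{2}$ already appears when differencing the velocity fluxes \eqref{FluxVelocity} and $m_1'^{\Delta v}$ is \emph{defined} by \eqref{m1Prime} as its first moment --- the summation by parts and the $\mathcal{O}(M_L)$ boundary terms you worry about only enter when relating $m_1'^{\Delta v}$ to $-m_0^{\Delta v}$, which is the content of the remark following the proposition, not of the proposition itself.
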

\begin{Remark}
    We want to point out that with our specific choice of $\M$ as a Gaussian, the quantity $m_1'~=~\langle v\dv M\rangle$ can be explicitly computed using an integration by part and $m_1'=-m_0$. On the discrete level, a discrete integration by parts can be performed. However, to obtain an equivalent result to the continuous case, one must neglect the boundary terms of the discrete Maxwellian. Under the assumption that the velocity domain is sufficiently large, the Gaussian decays fast enough that in practice one has $m_1'^{\Delta v}~=~-m_0^{\Delta v}$.
\end{Remark}
\begin{proof}
The mesh size $\Delta x$, $\Delta v>0$ being set, let us emphasize that we consider only the pointwise convergence of the scheme as $\varepsilon$ tends to 0. The first step is to study the asymptotic behaviour of the perturbation $\left(g_{\ipd,j}^{\varepsilon,n+1}\right)_{ij}$. By induction on $n$, let us show that $g_{\ipd,j}^{\varepsilon,n+1}\underset{\varepsilon\rightarrow0}{\longrightarrow}0$ for any initial data $(\rho^0,g^0)$ and for all $(i,j)\in\mathcal{I}\times\mathcal{J}$. At $n=0$, one has
\begin{equation}
    g_{\ipd,j}^{\varepsilon,1} = g_{\ipd,j}^{0}e^{-\Delta t/ \varepsilon^2} - \varepsilon(1-e^{-\Delta t/ \varepsilon^2})\frac{T^0_{\ipd,j}}{\Delta x\Delta v}.
\end{equation}
As $e^{-\Delta t/ \varepsilon^2}\underset{\varepsilon\rightarrow0}{\longrightarrow}0$ and since $\left(T^0_{\ipd,j}\right)_{ij}$ depends only on the initial data which itself is independent of $\varepsilon$, $$g_{\ipd,j}^{\varepsilon,1}\underset{\varepsilon\rightarrow0}{\longrightarrow}0,\quad\forall (i,j)\in\mathcal{I}\times\mathcal{J}.$$ Let us now assume that 
\begin{equation}\label{HypInduction}
    g_{\ipd,j}^{\varepsilon,n}\underset{\varepsilon\rightarrow0}{\longrightarrow}0\quad\forall (i,j)\in\mathcal{I}\times\mathcal{J}.
\end{equation}
We decompose the discrete transport operator as a macroscopic part that depends only on the density $\rho^\varepsilon$ and a microscopic part depending on the perturbation $g^\varepsilon$. In particular, we will show that the microscopic contribution to the transport vanishes as $\varepsilon\rightarrow0$. The decomposition reads:
\begin{equation}\label{P+Q}
    T^{\varepsilon,n}_{\ipd,j}=P^{\varepsilon,n}_{\ipd,j}+Q^{\varepsilon,n}_{\ipd,j},
\end{equation}
where the macroscopic part $\left(P^{\varepsilon,n}_{\ipd,j}\right)_{ij}$ is given by
\begin{equation}\label{P}
    P^{\varepsilon,n}_{\ipd,j}=v_jM_j(\rho_{i+1}^{\varepsilon,n}-\rho^{\varepsilon,n}_i)\Delta v + E_\ipd\rho^{\varepsilon,n}_\ipd\frac{M_{j+1}-M_{j-1}}{2}\Delta x,
\end{equation} 
and the microscopic part $\left(Q^{\varepsilon,n}_{\ipd,j}\right)_{ij}$ by
\begin{equation}\label{Q}
\begin{aligned}
    Q^{\varepsilon,n}_{\ipd,j}=&\left(v^{+}_j(g_{\ipd,j}^{\varepsilon,n}-g_{\imd,j}^n)+v^{-}_j(g_{i+\frac{3}{2},j}^{\varepsilon,n}-g_{\ipd,j}^{\varepsilon,n})\right)\Delta v\\ 
    &- M_j\left\langle v^{+}(g_{\ipd}^{\varepsilon,n}-g_{\imd}^n)+v^{-}(g_{i+\frac{3}{2}}^{\varepsilon,n}-g_{\ipd}^{\varepsilon,n})\right\rangle_\Delta\Delta v\\
    &+\left(E^{+}_{\ipd}(g_{\ipd,j}^{\varepsilon,n}-g_{\ipd,j-1}^{\varepsilon,n})+E^{-}_{\ipd}(g_{\ipd,j+1}^{\varepsilon,n}-g_{\ipd,j}^{\varepsilon,n})\right)\Delta x.
\end{aligned}
\end{equation}
Under the hypothesis \eqref{HypInduction}, one obtains that $Q^{\varepsilon,n}_{\ipd,j}\underset{\varepsilon\rightarrow0}{\longrightarrow}0$ for all $(i,j)\in\mathcal{I}\times\mathcal{J}$. Let us now use Assumption \ref{H0}: namely $\rho^{\varepsilon,n}_i\underset{\varepsilon\rightarrow0}{\longrightarrow}\rho^n_i$, and one has $$P^{\varepsilon,n}_{\ipd,j}\underset{\varepsilon\rightarrow0}{\longrightarrow}P^{n}_{\ipd,j}=v_jM_j(\rho_{i+1}^{n}-\rho^{n}_i)\Delta v + E_\ipd\rho^{n}_\ipd\frac{M_{j+1}-M_{j-1}}{2}\Delta x.$$
Then, as $\left(P^{n}_{\ipd,j}\right)_{ij}$ depends neither on $g$ nor $\varepsilon$,
\begin{equation}\label{LimitTtmp}
    T^{\varepsilon,n}_{\ipd,j}\underset{\varepsilon\rightarrow0}{\longrightarrow}P^n_{\ipd,j},\quad\forall (i,j)\in\mathcal{I}\times\mathcal{J}.
\end{equation}
Now, the asymptotic limit of \eqref{S_micro} can be computed. Since $\varepsilon(1-e^{-\Delta t/ \varepsilon^2})\underset{\varepsilon\rightarrow0}{\longrightarrow}0$ and $e^{-\Delta t/ \varepsilon^2}\underset{\varepsilon\rightarrow0}{\longrightarrow}0$ we can use \eqref{LimitTtmp} to obtain:
\begin{equation}\label{LimitG}
    g_{\ipd,j}^{\varepsilon,n+1}\underset{\varepsilon\rightarrow0}{\longrightarrow}0,\quad\forall n,\quad\forall (i,j)\in\mathcal{I}\times\mathcal{J}.
\end{equation}
As a consequence, one also has that for all $n$,
\begin{equation}\label{LimitT}
    T^{\varepsilon,n}_{\ipd,j}\underset{\varepsilon\rightarrow0}{\longrightarrow}P^n_{\ipd,j},\quad\forall (i,j)\in\mathcal{I}\times\mathcal{J}.
\end{equation}
The next step is to plug \eqref{S_micro} into \eqref{S_macro}. Using definition \eqref{MacroFluxCont} of the macro flux, one obtains:
\begin{equation}\label{rhoManFlux}
\begin{aligned}
    \rho_i^{\varepsilon,n+1} = \rho_i^{\varepsilon,n} - \frac{\Delta t}{\varepsilon\Delta x}\bigg\langle v\bigg[&e^{-\Delta t/ \varepsilon^2}(g_{\ipd}^{\varepsilon,n}-g_{\imd}^{\varepsilon,n})- \varepsilon\frac{(1-e^{-\Delta t/ \varepsilon^2})}{\Delta x\Delta v}(T^{\varepsilon,n}_{\ipd}-T^{\varepsilon,n}_{\imd})\bigg]\bigg\rangle_\Delta.
\end{aligned}
\end{equation}
We can then take the limit $\varepsilon\longrightarrow0$ in \eqref{rhoManFlux} using the previous asymptotic limits \eqref{LimitG} and \eqref{LimitT}. We then replace $P^n_{\ipd}$ by its expression:
\begin{equation}
\begin{aligned}
    \rho_i^{n+1}&= \rho_i^n+\frac{\Delta t}{\Delta x^2\Delta v}\left\langle v\left(P^n_{\ipd}-P^n_{\imd}\right)\right\rangle_\Delta\\
    &=\rho_i^n+\langle v^2M\rangle\frac{\Delta t}{\Delta x^2}(\rho^n_{i+1}-2\rho^n_{i}+\rho^n_{i-1})+\frac{\Delta t}{\Delta x}( E_\ipd\rho^n_\ipd-E_\imd\rho^n_\imd)\left\langle v\frac{M_{\cdot+1}-M_{\cdot-1}}{2\Delta v}\right\rangle_\Delta.
\end{aligned}
\end{equation}
Finally, using the definition of the discrete moments \eqref{discMoments} and \eqref{m1Prime}, we obtain the asymptotic scheme \eqref{S_lim}:
\begin{equation*}
\begin{aligned}
        \rho_i^{n+1} &= \rho_i^{n} + \frac{\Delta t}{\Delta x^2}m_2^{\Delta v}\big(\rho^n_{i+1}-2\rho^n_{i}+\rho^n_{i-1}\big) + \frac{\Delta t}{\Delta x}m_1'^{\Delta v}\big(E_\ipd\rho^n_\ipd - E_\imd\rho^n_\imd\big)\\
        &=\rho_i^{n} + \frac{\Delta t}{\Delta x}\left(J_\ipd^{F,n}-J_\imd^{F,n}\right).
\end{aligned}
\end{equation*}
\end{proof}
\noindent In order to show that the scheme \eqref{S_micro}-\eqref{S_macro} is truly AP, one also needs that the stability condition is independent or at least does not degenerate as $\varepsilon\rightarrow0$. While we do not prove the stability of the scheme, in practice, we can indeed use the same time-step for both large and small values of $\varepsilon$.
\section{Dynamic coupling}\label{sec:DynCoup}
The aim of this section is to introduce a coupling method between kinetic and fluid schemes. The objective is to obtain a coupled solver that is faster than a full kinetic one to solve \eqref{prob:Peps} while still being accurate. These methods come naturally when designing accurate numerical codes while guarantying reasonable computation times.~In the field of semiconductors equations, the idea to use the asymptotic limit of the kinetic model to achieve a domain decomposition can be found in \cite{Klar1998}. Such methods are also heavily dependent on the treatment of interface conditions between subdomains.

\noindent Following \cite{FilbetRey2015} we first construct a hybrid kinetic/fluid solver with a dynamic domain decomposition method and present its implementation. In the second part, we are interested in understanding the conservative aspect of the method. More precisely, we give a lemma on the mass variation induced by the coupling.

\subsection{Coupling criteria}
The idea of the dynamic domain decomposition method is twofold. First, the subdomains must accurately describe the state of the solution. In particular, the fluid model is only valid where the solution is near the local equilibrium in velocity. Secondly, we want the method to be dynamic in the sense that the subdomains are adapted at each time step. For this purpose, let us introduce $\Omega^n_\mathcal{K}$ the kinetic domain and $\Omega^n_\mathcal{F}$ the fluid one at time $t^n$. To determine in which domain each cell lies, we introduce criteria based on the higher order fluid model introduced in Section \ref{sec:HigherOrder} and the norm of the perturbation $g^\varepsilon=f^\varepsilon-\rho^\varepsilon\M$. Indeed, when $g^\varepsilon$ is close to $0$, it means that the solution is close to the local equilibrium.

\paragraph{Macroscopic criterion.}Let us consider a fluid subdomain. In this subdomain, one only has access to the macroscopic quantity $\rho$ and the given electrical field $E$.~Therefore, one cannot consider the perturbation $g^\varepsilon$. The solution we propose is to use the higher order model \eqref{DDH} and derive a macroscopic criterion. We have formally shown that \eqref{DDH} can be written in the form: $$\dt\rho^\varepsilon-\dx J^\varepsilon =\mathcal{R}^\varepsilon,$$ where $\mathcal{R}^\varepsilon$ is a remainder that depends only on the density $\rho^\varepsilon$ and the electrical field $E$. During the coupling procedure it will be computed using the kinetic density $\rho^\varepsilon$ in kinetic cells and using the fluid density $\rho$ in fluid cells. It is given by 
\begin{equation}\label{eq:Remainder}
    \mathcal{R^\varepsilon}=-\varepsilon^2\dx(2\dx(EJ^\varepsilon) - E\dx J^\varepsilon -\dxx J^\varepsilon), \quad\mbox{ where } J=\dx\rho-E\rho^\varepsilon.
\end{equation} 
Expanding $\mathcal{R}$ shows that one needs derivatives of $\rho$ up to fourth order and of $E$ up to third order:
\begin{equation}
\begin{aligned}
        \mathcal{R^\varepsilon} = -\varepsilon^2\big(&-\dxxxx\rho^\varepsilon\\
        &+E(2\dxxx\rho^\varepsilon-E\dxx\rho^\varepsilon)\\
        &+\dx E(-3\rho^\varepsilon\dx E - 5E\dx\rho^\varepsilon+6\dxx\rho^\varepsilon)\\
        &+\dxx E(-3\rho^\varepsilon E+5\dx\rho^\varepsilon)\\
        &+\rho^\varepsilon\dxxx E\big).
\end{aligned}
\end{equation}
Let us denote by $\mathcal{R}_i^{\varepsilon,n}$ a discretization of the remainder $\mathcal{R^\varepsilon}$. High order finite difference schemes are used (See Table \ref{tab:finiteDiff}).

\begin{table}
    \centering
    \begin{tabular}{|c|c|c|c|c|c|c|c|}
        \hline
        \diagbox{Derivative}{Index} & -3 & -2 & -1 & 0 & 1 & 2 & 3\\
        \hline
        1 & -1/60 & 3/20 & -3/4 & 0 & 3/4 & -3/20 & 1/60\\
        \hline
        2 & 1/90 & -3/20 & 3/2 & -49/18 & 3/2 & -3/20 & 1/90\\
        \hline
        3 & 1/8 & -1 & 13/8 & 0 & -13/8 & 1 & -1/8\\
        \hline
        4 & -1/6 & 2 & -13/2 & 28/3 & -13/2 & 2 & -1/6\\
        \hline
    \end{tabular}
    \caption{Central finite differences coefficient.}
    \label{tab:finiteDiff}
\end{table}

\noindent Let $\eta_0,\,\delta_0>0$ be the coupling thresholds. In a fluid domain, when $\mathcal{R}^n$ is large, the model \eqref{DDH} is far from the limit model \eqref{DD} and one must use the kinetic one instead. More specifically, consider a fluid cell $\mathcal{X}_i\subset\Omega^n_\mathcal{F}$.
\begin{itemize}
\item If $\left|\mathcal{R}^{n}_i\right|\leq\eta_0$, then the cell stays fluid at $t^{n+1}$.
\item If $\left|\mathcal{R}^{n}_i\right|>\eta_0$, then the cell becomes kinetic at $t^{n+1}$:
$$\mathcal{X}_i\not\subset\Omega^{n+1}_\mathcal{F}\quad\mbox{ and }\quad \mathcal{X}_i\subset\Omega^{n+1}_\mathcal{K}.$$
\end{itemize}
In a kinetic subdomain, unlike the previous case, one has access to the perturbation $g^\varepsilon$. When this perturbation is small, it means that the solution is near a local equilibrium with respect to the velocity variable. As a consequence, the model is close to the fluid one and one can use the limit model. Moreover, we also use the criterion that the remainder $\mathcal{R^\varepsilon}$ must be small. Consider now a kinetic cell $\mathcal{X}_i\subset\Omega^n_\mathcal{K}$:
\begin{itemize} 
\item If $\,||g^{\varepsilon,n}_{\imd}||_{\gamma}>\delta_0$ and $||g^{\varepsilon,n}_{\ipd}||_{\gamma}>\delta_0$ then the cell stays kinetic at $t^{n+1}$.
\item If $\,||g^{\varepsilon,n}_{\imd}||_{\gamma}\leq\delta_0$, $\,||g^{\varepsilon,n}_{\ipd}||_{\gamma}\leq\delta_0$ and $\left|\mathcal{R}^{\varepsilon,n}_i\right|>\eta_0$, then the cell stays kinetic at $t^{n+1}$.
\item If $\,||g^{\varepsilon,n}_{\imd}||_{\gamma}\leq\delta_0$, $\,||g^{\varepsilon,n}_{\ipd}||_{\gamma}\leq\delta_0$ and $\left|\mathcal{R}^{\varepsilon,n}_i\right|\leq\eta_0$, then the cell becomes fluid at $t^{n+1}$: $$\mathcal{X}_i\not\subset\Omega^{n+1}_\mathcal{K}\quad\mbox{ and }\quad \mathcal{X}_i\subset\Omega^{n+1}_\mathcal{F}.$$
\end{itemize}
The discrete norm $||g^{\varepsilon,n}_{\imd}||_{\gamma}$ is a discrete version of the norm on $L^2(\dd\gamma)$. It is defined for $\left(g^{\varepsilon,n}_{\ipd}\right)_{j\in\mathcal{J}}$ by:
\begin{equation}\label{NormG}
    ||g^{\varepsilon,n}_{\imd}||_{\gamma}=\sum_{j\in\mathcal{J}} \left(g^{\varepsilon,n}_{\imd}\right)^2M_j^{-1}\Delta v.
\end{equation}
\begin{Remark}
Note that in a kinetic cell, the criterion on the norm of $g^\varepsilon$ is mandatory. Indeed, the remainder $\mathcal{R}^n_i$ could be small, but the perturbation large. In this situation, one does not want to change from kinetic to fluid. As an example, one could take a distribution function constant in position and far from the Maxwellian in velocity.
\end{Remark}

\subsection{Implementation}\label{sec:Implementation}
We now present in more details the implementation of the coupling method. 

\noindent An important part of this approach is the management of boundary conditions. When solving on the whole space domain, periodic boundary conditions are applied. However, when solving in the subdomains $\Omega^n_\mathcal{K}$ and $\Omega^n_\mathcal{F}$, we need to adapt our solver. Our strategy is to use ghost cell values that are chosen appropriately. The difficulty lies in the fact that the limit scheme only computes the density $\rho$ and not the pair $(\rho^\varepsilon,g^\varepsilon)$. As the coupling method is dynamic, one does not know in advance the state of the cells. As a consequence, one must be able to access all unknowns on the whole domain at any time. Our solution is to take advantage of the structure of the micro-macro scheme. Indeed, aside from visualisation and diagnostics, an explicit discretization of the distribution function isn't necessary. We are working only with $\rho^{\varepsilon,n}$ and $g^{\varepsilon,n}$. As a consequence, we have access to the macro unknown on the whole domain and there is no information missing in the arrays. The distribution $f^\varepsilon$ is reconstructed using $f^{\varepsilon,n}_{i,j}=\rho^{\varepsilon,n}_i\M_j+\frac{1}{2}\left(g^{\varepsilon,n}_{\imd,j}+g^{\varepsilon,n}_{\ipd,j}\right)$ only for posttreatment. However, the kinetic solver may still needs values of $g^\varepsilon$ on the whole space domain. Therefore, the array storing $g^\varepsilon$ must be filled in the fluid domain. In practice, to improve the performance, $g^\varepsilon$ is not updated in fluid regions and it is set to $0$ only when needed. In particular, it occurs when a fluid cell becomes kinetic. 

\noindent Another important remark is that since $g^\varepsilon$ is approximated on the dual mesh, one must be careful at the interfaces between kinetic and fluid subdomains. To avoid any ambiguity on the state of an interface when updating the perturbation $g^\varepsilon$, we impose that a fluid subdomain is at least two cells wide. Under this condition the state of the ghost interface is well determined. See Figure \ref{fig:TransMM} for an illustration of such a situation.

\begin{figure}
    \centering
    \definecolor{ttttff}{rgb}{0.2,0.2,1.}
\definecolor{ffqqqq}{rgb}{1.,0.,0.}
\begin{tikzpicture}[line cap=round,line join=round,>=triangle 45,x=1.0cm,y=1.0cm]
\clip(-3.0,-3.5) rectangle (6.0,-1.0);
\draw [line width=1.pt] (-2.5,-2.5)-- (5.5,-2.5);
\draw [line width=.5pt,dash pattern=on 3pt off 3pt] (1.5,-1.0)-- (1.5,-3.2);
\draw [line width=1.6pt,dotted,color=ffqqqq] (-2.4091828215273132,-1.4999223418025338)-- (1.3917579060311982,-1.5071147697370142);
\draw [line width=1.6pt,dotted,color=ttttff] (1.6036544246558697,-1.4999223418025338)-- (5.419871583052794,-1.4999223418025338);
\draw (-1.1,-0.9) node[anchor=north west] {$Kinetic$};
\draw (3.,-0.9) node[anchor=north west] {$Fluid$};
\draw (-2.0,-2.75) node[anchor=north west] {$\mathcal{X}_{s-2}$};
\draw (0.0,-2.75) node[anchor=north west] {$\mathcal{X}_{s-1}$};
\draw (2.3,-2.75) node[anchor=north west] {$\mathcal{X}_{s}$};
\draw (4.1,-2.75) node[anchor=north west] {$\mathcal{X}_{s+1}$};
\draw (-2.0,-1.77) node[anchor=north west] {$\rho^{\varepsilon,n}_{s-2}$};
\draw (0.0,-1.77) node[anchor=north west] {$\rho^{\varepsilon,n}_{s-1}$};
\draw (1.0,-1.77) node[anchor=north west] {$g^{\varepsilon,n}_{s-\frac{1}{2}}$};
\draw (-1.0,-1.77) node[anchor=north west] {$g^{\varepsilon,n}_{s-\frac{3}{2}}$};
\draw (-3.0,-1.77) node[anchor=north west] {$g^{\varepsilon,n}_{s-\frac{5}{2}}$};
\draw (2.3,-1.77) node[anchor=north west] {$\rho^{n}_{s}$};
\draw (4.1,-1.77) node[anchor=north west] {$\rho^{n}_{s+1}$};
\begin{scriptsize}
\draw [fill=ffqqqq] (-2.5,-2.5) ++(-2.0pt,0 pt) -- ++(2.0pt,2.0pt)--++(2.0pt,-2.0pt)--++(-2.0pt,-2.0pt)--++(-2.0pt,2.0pt);
\draw [fill=ttttff] (5.5,-2.5) ++(-2.0pt,0 pt) -- ++(2.0pt,2.0pt)--++(2.0pt,-2.0pt)--++(-2.0pt,-2.0pt)--++(-2.0pt,2.0pt);
\draw [fill=ffqqqq] (-0.5,-2.5) ++(-2.0pt,0 pt) -- ++(2.0pt,2.0pt)--++(2.0pt,-2.0pt)--++(-2.0pt,-2.0pt)--++(-2.0pt,2.0pt);
\draw [fill=ttttff] (3.5,-2.5) ++(-2.0pt,0 pt) -- ++(2.0pt,2.0pt)--++(2.0pt,-2.0pt)--++(-2.0pt,-2.0pt)--++(-2.0pt,2.0pt);
\draw [fill=ffqqqq] (1.5,-2.5) ++(-2.0pt,0 pt) -- ++(2.0pt,2.0pt)--++(2.0pt,-2.0pt)--++(-2.0pt,-2.0pt)--++(-2.0pt,2.0pt);
\draw [fill=ffqqqq,shift={(-2.4091828215273132,-1.4999223418025338)},rotate=90] (0,0) ++(0 pt,3.75pt) -- ++(3.2475952641916446pt,-5.625pt)--++(-6.495190528383289pt,0 pt) -- ++(3.2475952641916446pt,5.625pt);
\draw [fill=ffqqqq,shift={(1.3917579060311982,-1.5071147697370142)},rotate=270] (0,0) ++(0 pt,3.75pt) -- ++(3.2475952641916446pt,-5.625pt)--++(-6.495190528383289pt,0 pt) -- ++(3.2475952641916446pt,5.625pt);
\draw [fill=ttttff,shift={(1.6036544246558697,-1.4999223418025338)},rotate=90] (0,0) ++(0 pt,3.75pt) -- ++(3.2475952641916446pt,-5.625pt)--++(-6.495190528383289pt,0 pt) -- ++(3.2475952641916446pt,5.625pt);
\draw [fill=ttttff,shift={(5.419871583052794,-1.4999223418025338)},rotate=270] (0,0) ++(0 pt,3.75pt) -- ++(3.2475952641916446pt,-5.625pt)--++(-6.495190528383289pt,0 pt) -- ++(3.2475952641916446pt,5.625pt);
\end{scriptsize}
\end{tikzpicture}
    \caption{Transition between kinetic and fluid cell for the micro-macro scheme.}
    \label{fig:TransMM}
\end{figure}
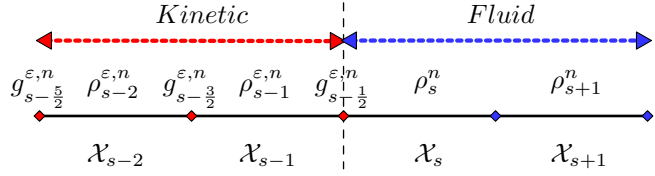

\noindent The algorithm can be summarized as follows:
\begin{algorithm}[H]
\caption{Hybrid scheme}\label{alg:Hybrid}
\begin{enumerate}
    \item Set $\varepsilon$, $\delta_0$, $\eta_0$ and a final time $T$.
    \item Initialize micro-macro unknowns using the relations $\rho^0=\langle f^0\rangle$ and $g^0=f^0-\rho^0M$.
    \item Initialize $\Omega^0_\mathcal{K}$ as the whole space ($\Omega^0_\mathcal{F}=$\O).
    \item Compute $g^{\varepsilon,n+1}$ and $\rho^{\varepsilon,n+1}$ in $\Omega^n_\mathcal{K}$ using the kinetic scheme \eqref{S_micro}-\eqref{S_macro}.
    \item Compute $\rho^{n+1}$ in $\Omega^n_\mathcal{F}$ using the limit scheme \eqref{S_lim}.
    \item Set $g^{n+1}=0$ in $\Omega^n_\mathcal{F}$.
    \item Update $\Omega^n_\mathcal{K}$ and $\Omega^n_\mathcal{F}$ to $\Omega^{n+1}_\mathcal{K}$ and $\Omega^{n+1}_\mathcal{F}$ using the criteria presented above.
    \item Increment time and repeat until $t^{n+1}=T$.
\end{enumerate}
\end{algorithm}
\noindent In particular, Algorithm \ref{alg:Hybrid} explicitly defines a numerical scheme on the hybrid density $\Tilde{\rho}$: 
\begin{equation}\label{HybridRho}
    \Tilde{\rho}_i^{n+1} = \Tilde{\rho}_i^{n} + \frac{\Delta t}{\Delta x}J_i^{H,n},
\end{equation}
where 
\begin{equation}\label{HybridFlux}
    J_i^{H,n} = \left\lbrace\begin{aligned}
        \left(J_\ipd^{K,n}-J_\imd^{K,n}\right)\mbox{ if }\quad\mathcal{X}_i\in\Omega^n_\mathcal{K},\\
        \left(J_\ipd^{F,n}-J_\imd^{F,n}\right)\mbox{ if }\quad\mathcal{X}_i\in\Omega^n_\mathcal{F}.\\
    \end{aligned}\right.
\end{equation}
Note that we want to start the resolution with the approach containing the full information on the system. Hence, it makes sense to initialize our domain as fully kinetic.

\subsection{Mass conservation}
This section is dedicated to investigate the mass conservation of the hybrid method. This property being satisfied in the continuous case, one expects conservation in the discrete setting. Each of the standard schemes is conservative on its own by construction. However, the question arises when considering the hybrid scheme.

\noindent In order to understand the loss of mass, we consider a toy model that isn't relevant in practice but will highlight the key elements to constrain the mass variation. Let us set the state of cells for every time step two domains:
\begin{equation}\label{DomainMassVar}
    \Omega_\mathcal{K}=\bigcup\limits_{i=1}^{s-1} \mathcal{X}_{i} \quad\mbox{ and }\quad \Omega_\mathcal{F}=\bigcup\limits_{i=s}^{Nx} \mathcal{X}_{i}.
\end{equation} 
Note that in the next result, we neglect what happens at the boundary. Our primary focus is to understand what happens at the interface $x_{s-\frac{1}{2}}$ between the two domains. Moreover, in that context and with periodic boundary conditions in space, the same analysis can be done at the interface $x_{\frac{1}{2}}$. Figure \ref{fig:ConstDomainMass} illustrates this framework. The following lemma quantifies the mass variation between two time steps.

\begin{figure}
    \centering
    \definecolor{ududff}{rgb}{0.30196078431372547,0.30196078431372547,1.}
\definecolor{ffqqqq}{rgb}{1.,0.,0.}
\begin{tikzpicture}[line cap=round,line join=round,>=triangle 45,x=2.6064236014351794cm,y=1.7238977153352264cm]
\clip(-0.47,0.832) rectangle (5.28,3.15);
\draw [line width=1.pt,color=ffqqqq] (0.,2.)-- (1.,2.);
\draw [line width=1.pt,color=ffqqqq] (1.,2.)-- (2.,2.);
\draw [line width=1.pt,color=ududff] (3.,2.)-- (4.,2.);
\draw [line width=1.pt,color=ududff] (4.,2.)-- (5.,2.);
\draw [line width=1.pt,dotted,color=ffqqqq] (0.,2.6)-- (2.4,2.6);
\draw [line width=1.pt,dotted,color=ududff] (2.6,2.6)-- (5.003103753290778,2.5922740582605805);
\draw [line width=1.pt,color=ffqqqq] (0.,1.5)-- (1.,1.5);
\draw [line width=1.pt,color=ffqqqq] (1.,1.5)-- (2.,1.5);
\draw [line width=1.pt,color=ududff] (3.,1.5)-- (4.,1.5);
\draw [line width=1.pt,color=ududff] (4.,1.5)-- (5.,1.5);
\draw [line width=1.pt,color=ffqqqq] (2.,1.5)-- (2.5,1.5);
\draw [line width=1.pt,color=ududff] (3.,1.5)-- (2.5,1.5);
\draw [line width=1.pt,color=ffqqqq] (2.,2.)-- (2.5,2.);
\draw [line width=1.pt,color=ududff] (2.5,2.)-- (3.,2.);
\draw (-0.4,1.65) node[anchor=north west] {$t^{n-1}$};
\draw (-0.4,2.16) node[anchor=north west] {$t^{n}$};
\draw (0.90,2.9) node[anchor=north west] {Kinetic};
\draw (3.60,2.9) node[anchor=north west] {Fluid};
\draw (2.50,1.36) node[anchor=north west] {$x_{s-\frac{1}{2}}$};
\draw (3.40,1.36) node[anchor=north west] {$x_{s+\frac{1}{2}}$};
\draw (4.48,1.36) node[anchor=north west] {$x_{s+\frac{3}{2}}$};
\draw (1.46,1.36) node[anchor=north west] {$x_{s-\frac{3}{2}}$};
\draw (0.40,1.36) node[anchor=north west] {$x_{s-\frac{5}{2}}$};
\draw (1.93,1.4) node[anchor=north west] {$x_{s-1}$};
\draw (3.90,1.4) node[anchor=north west] {$x_{s+1}$};
\draw (0.89,1.4) node[anchor=north west] {$x_{s-2}$};
\draw (-0.11,1.4) node[anchor=north west] {$x_{s-3}$};
\draw (2.92,1.4) node[anchor=north west] {$x_{s}$};
\draw (4.92,1.4) node[anchor=north west] {$x_{s+2}$};
\draw (2.32,2.5) node[anchor=north west, color=ffqqqq] {$J^{K,n}_{s-\frac{1}{2}}$};
\draw (1.35,2.5) node[anchor=north west, color=ffqqqq] {$J^{K,n}_{s-\frac{3}{2}}$};
\draw (0.35,2.5) node[anchor=north west, color=ffqqqq] {$J^{K,n}_{s-\frac{5}{2}}$};
\draw (2.32,2.00) node[anchor=north west, color=ududff] {$J^{F,n}_{s-\frac{1}{2}}$};
\draw (3.32,2.00) node[anchor=north west, color=ududff] {$J^{F,n}_{s+\frac{1}{2}}$};
\draw (4.32,2.00) node[anchor=north west, color=ududff] {$J^{F,n}_{s+\frac{3}{2}}$};
\begin{scriptsize}
\draw [fill=black] (0.,2.) circle (2.0pt);
\draw [fill=black] (1.,2.) circle (2.0pt);
\draw [fill=black] (2.,2.) circle (2.0pt);
\draw [fill=black] (3.,2.) circle (2.0pt);
\draw [fill=black] (4.,2.) circle (2.0pt);
\draw [fill=black] (5.,2.) circle (2.0pt);
\draw [fill=ffqqqq,shift={(0.,2.6)},rotate=90] (0,0) ++(0 pt,2.25pt) -- ++(1.9485571585149868pt,-3.375pt)--++(-3.8971143170299736pt,0 pt) -- ++(1.9485571585149868pt,3.375pt);
\draw [fill=ffqqqq,shift={(2.4,2.6)},rotate=270] (0,0) ++(0 pt,2.25pt) -- ++(1.9485571585149868pt,-3.375pt)--++(-3.8971143170299736pt,0 pt) -- ++(1.9485571585149868pt,3.375pt);
\draw [fill=ududff,shift={(2.6,2.6)},rotate=90] (0,0) ++(0 pt,2.25pt) -- ++(1.9485571585149868pt,-3.375pt)--++(-3.8971143170299736pt,0 pt) -- ++(1.9485571585149868pt,3.375pt);
\draw [fill=ududff,shift={(5.003103753290778,2.5922740582605805)},rotate=270] (0,0) ++(0 pt,2.25pt) -- ++(1.9485571585149868pt,-3.375pt)--++(-3.8971143170299736pt,0 pt) -- ++(1.9485571585149868pt,3.375pt);
\draw [fill=black] (0.,1.5) circle (2.0pt);
\draw [fill=black] (1.,1.5) circle (2.0pt);
\draw [fill=black] (2.,1.5) circle (2.0pt);
\draw [fill=black] (3.,1.5) circle (2.0pt);
\draw [fill=black] (4.,1.5) circle (2.0pt);
\draw [fill=black] (5.,1.5) circle (2.0pt);
\draw [fill=black] (1.5,1.5) ++(-2.0pt,0 pt) -- ++(2.0pt,2.0pt)--++(2.0pt,-2.0pt)--++(-2.0pt,-2.0pt)--++(-2.0pt,2.0pt);
\draw [fill=black] (1.5,2.) ++(-2.0pt,0 pt) -- ++(2.0pt,2.0pt)--++(2.0pt,-2.0pt)--++(-2.0pt,-2.0pt)--++(-2.0pt,2.0pt);
\draw [fill=black] (0.5,2.) ++(-2.0pt,0 pt) -- ++(2.0pt,2.0pt)--++(2.0pt,-2.0pt)--++(-2.0pt,-2.0pt)--++(-2.0pt,2.0pt);
\draw [fill=black] (0.5,1.5) ++(-2.0pt,0 pt) -- ++(2.0pt,2.0pt)--++(2.0pt,-2.0pt)--++(-2.0pt,-2.0pt)--++(-2.0pt,2.0pt);
\draw [fill=black] (3.5,2.) ++(-2.0pt,0 pt) -- ++(2.0pt,2.0pt)--++(2.0pt,-2.0pt)--++(-2.0pt,-2.0pt)--++(-2.0pt,2.0pt);
\draw [fill=black] (3.5,1.5) ++(-2.0pt,0 pt) -- ++(2.0pt,2.0pt)--++(2.0pt,-2.0pt)--++(-2.0pt,-2.0pt)--++(-2.0pt,2.0pt);
\draw [fill=black] (4.5,2.) ++(-2.0pt,0 pt) -- ++(2.0pt,2.0pt)--++(2.0pt,-2.0pt)--++(-2.0pt,-2.0pt)--++(-2.0pt,2.0pt);
\draw [fill=black] (4.5,1.5) ++(-2.0pt,0 pt) -- ++(2.0pt,2.0pt)--++(2.0pt,-2.0pt)--++(-2.0pt,-2.0pt)--++(-2.0pt,2.0pt);
\draw [fill=black] (2.5,1.5) ++(-2.0pt,0 pt) -- ++(2.0pt,2.0pt)--++(2.0pt,-2.0pt)--++(-2.0pt,-2.0pt)--++(-2.0pt,2.0pt);
\draw [fill=black] (2.5,2.) ++(-2.0pt,0 pt) -- ++(2.0pt,2.0pt)--++(2.0pt,-2.0pt)--++(-2.0pt,-2.0pt)--++(-2.0pt,2.0pt);
\end{scriptsize}
\end{tikzpicture}
    \caption{Zoom on the interface of a steady domain decomposition.}
    \label{fig:ConstDomainMass}
\end{figure}
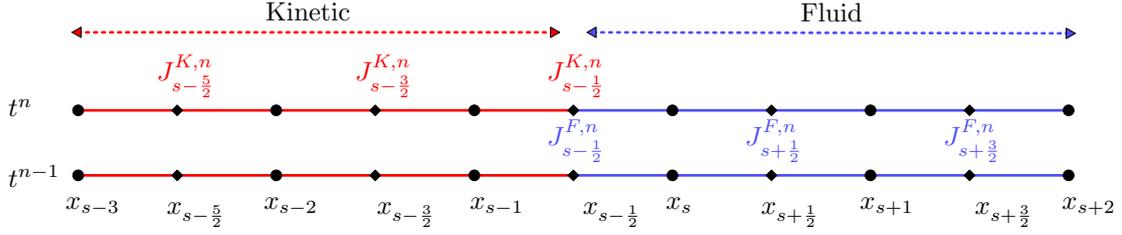

\begin{Lemma}\label{LemmaMass}
Let $\left(\Tilde{\rho}_i^n\right)_{i}$ and $\left(g_{i+\frac{1}{2},j}^{\varepsilon,n}\right)_{ij}$ be computed using the hybrid scheme \eqref{S_micro}-\eqref{HybridRho}. Let the mass variation between $t^n$ and $t^{n+1}$ be defined as:
\begin{equation}\label{Def:Deltam}
    \Delta m^{n+\frac{1}{2}}=\sum_{i\in\mathcal{I}}\Delta x\frac{(\Tilde{\rho}_i^{n+1}-\Tilde{\rho}_i^{n})}{\Delta t}.
\end{equation} 
In the context of the steady domain decomposition \eqref{DomainMassVar} and neglecting the boundaries, one has:
\begin{equation}\label{DeltaM}
    \Delta m^{n+\frac{1}{2}} = -\left\langle vg_{s-\frac{1}{2}}^n\right\rangle_\Delta\frac{e^{-\Delta t/\varepsilon^2}}{\varepsilon} + \frac{1-e^{-\Delta t/\varepsilon^2}}{\Delta x\Delta v}\left\langle Q_{s-\frac{1}{2}}^{\varepsilon,n}\right\rangle_\Delta - e^{-\Delta t/\varepsilon^2}J_{s-\frac{1}{2}}^{F,n}+\left(J_{s-\frac{1}{2}}^{\varepsilon,F,n} - J_{s-\frac{1}{2}}^{F,n}\right),
\end{equation}
where $Q_{s-\frac{1}{2},j}^{\varepsilon,n}$ is given by \eqref{Q}, $J_{s-\frac{1}{2}}^{F,n}$ by \eqref{LimFlux} and $J_{s-\frac{1}{2}}^{\varepsilon,F,n}$ is the limit flux \eqref{LimFlux} computed with the kinetic density $\rho^\varepsilon$.
\end{Lemma}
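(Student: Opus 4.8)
The plan is to start from the definition \eqref{Def:Deltam} and insert the hybrid update \eqref{HybridRho}. The factors $\Delta x$ and $\Delta t$ cancel immediately, so that
$$\Delta m^{n+\frac{1}{2}} = \sum_{i\in\mathcal{I}} J_i^{H,n},$$
and it only remains to sum the hybrid flux differences \eqref{HybridFlux}. First I would split this sum into its kinetic block $i=1,\dots,s-1$ and its fluid block $i=s,\dots,N_x$. Within each block the flux type is constant, so each sum telescopes: the kinetic block collapses to $J^{K,n}_{s-\frac{1}{2}}-J^{K,n}_{\frac{1}{2}}$ and the fluid block to $J^{F,n}_{N_x+\frac{1}{2}}-J^{F,n}_{s-\frac{1}{2}}$. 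Neglecting the boundary, that is the periodic interface $x_{\frac{1}{2}}\equiv x_{N_x+\frac{1}{2}}$ as permitted by the statement, this leaves only the jump across the single kinetic/fluid interface,
$$\Delta m^{n+\frac{1}{2}} = J^{K,n}_{s-\frac{1}{2}}-J^{F,n}_{s-\frac{1}{2}}.$$
This already isolates the mechanism: the flux the kinetic scheme sends out of $\mathcal{X}_{s-1}$ through $x_{s-\frac{1}{2}}$ is generally not the flux the fluid scheme lets into $\mathcal{X}_s$, and the mismatch is exactly the mass created or destroyed.

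The second step is to make the kinetic interface flux explicit. Using the macroscopic flux \eqref{MacroFluxCont} together with the closed form \eqref{S_micro} of $g^{\varepsilon,n+1}_{s-\frac{1}{2}}$, the flux splits into a relaxation part carrying the factor $e^{-\Delta t/\varepsilon^2}$ and proportional to $\langle v g^{\varepsilon,n}_{s-\frac{1}{2}}\rangle_\Delta$, and a transport part carrying $1-e^{-\Delta t/\varepsilon^2}$ and built from $T^{\varepsilon,n}_{s-\frac{1}{2}}$. I would then reuse the decomposition $T^{\varepsilon,n}=P^{\varepsilon,n}+Q^{\varepsilon,n}$ from the proof of Proposition \ref{prop:AP}: taking the velocity moment of the macroscopic part and invoking the discrete moments $m_2^{\Delta v}$ and $m_1'^{\Delta v}$ reconstitutes precisely $\Delta x\Delta v$ times the limit flux \eqref{LimFlux} evaluated on the kinetic density, that is $\langle v P^{\varepsilon,n}_{s-\frac{1}{2}}\rangle_\Delta=\Delta x\Delta v\,J^{\varepsilon,F,n}_{s-\frac{1}{2}}$, while $Q^{\varepsilon,n}$ supplies the genuinely microscopic contribution weighted by $\tfrac{1-e^{-\Delta t/\varepsilon^2}}{\Delta x\Delta v}$. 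This produces the first three families of terms of \eqref{DeltaM}.

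The final step is bookkeeping: collecting the relaxation term, the $Q$-term, and the reconstructed $J^{\varepsilon,F,n}_{s-\frac{1}{2}}$ term, subtracting the fluid interface flux $J^{F,n}_{s-\frac{1}{2}}$, and then regrouping the two limit-flux contributions into the combination $-e^{-\Delta t/\varepsilon^2}J^{F,n}_{s-\frac{1}{2}}+\bigl(J^{\varepsilon,F,n}_{s-\frac{1}{2}}-J^{F,n}_{s-\frac{1}{2}}\bigr)$ displayed in \eqref{DeltaM}.

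I expect the delicate part to be the interface bookkeeping of the densities rather than any single computation. Because $\mathcal{X}_{s-1}$ carries the kinetic density $\rho^{\varepsilon,n}$ together with $g^{\varepsilon,n}$, whereas $\mathcal{X}_s$ carries only the fluid density $\rho^n$, the reconstructed macroscopic flux $J^{\varepsilon,F,n}_{s-\frac{1}{2}}$ and the fluid flux $J^{F,n}_{s-\frac{1}{2}}$ are built from different density data on the two sides of $x_{s-\frac{1}{2}}$; tracking which density enters which flux is exactly what forces two distinct limit fluxes to appear and yields the residual $J^{\varepsilon,F,n}_{s-\frac{1}{2}}-J^{F,n}_{s-\frac{1}{2}}$ measuring the kinetic/fluid density gap. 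The second point requiring care is to propagate the relaxation weights $e^{-\Delta t/\varepsilon^2}$ and $1-e^{-\Delta t/\varepsilon^2}$ consistently through the substitution, since they govern how the interface mismatch scales with $\varepsilon$ and $\Delta t$ and confirm that, up to the neglected boundary, the whole obstruction to conservation is concentrated at the single interface $x_{s-\frac{1}{2}}$.
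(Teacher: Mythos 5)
Your proposal is correct and follows essentially the same route as the paper's proof: telescoping the hybrid flux sum down to the single interface jump $J^{K,n}_{s-\frac{1}{2}}-J^{F,n}_{s-\frac{1}{2}}$, substituting the exponential update \eqref{S_micro} for $g^{\varepsilon,n+1}_{s-\frac{1}{2}}$ into the kinetic flux, splitting $T^{\varepsilon,n}=P^{\varepsilon,n}+Q^{\varepsilon,n}$, and identifying $\langle vP^{\varepsilon,n}_{s-\frac{1}{2}}\rangle_\Delta/(\Delta x\Delta v)=J^{\varepsilon,F,n}_{s-\frac{1}{2}}$ before regrouping. Your closing remarks on which density feeds which flux and on tracking the relaxation weights correctly identify the only delicate points of the computation.
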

\begin{proof}
Let us consider the hybrid scheme \eqref{HybridRho}-\eqref{HybridFlux} on the density. Using the fixed domain decomposition \eqref{DomainMassVar} and neglecting the boundary, the mass variation writes:
\begin{equation}\label{DeltaM1}
\begin{aligned}
    \Delta m^{n+\frac{1}{2}}&=\sum_{i\in\mathcal{I}}\frac{\Delta x}{\Delta t}(\Tilde{\rho}_i^{n+1}-\Tilde{\rho}_i^{n})\\
    &=\sum_{i\in\mathcal{I}}J_i^{H,n}\\
    &=\sum\limits_{i=1}^{s-1}\left(J_\ipd^{K,n}-J_\imd^{K,n}\right) + \sum\limits_{i=s}^{N_x}\left(J_\ipd^{F,n}-J_\imd^{F,n}\right)\\
    &= J_{s-\frac{1}{2}}^{K,n} - J_{s-\frac{1}{2}}^{F,n}\\
    &= -\frac{1}{\varepsilon}\left\langle vg^{\varepsilon,n+1}_{s-\frac{1}{2}}\right\rangle_\Delta - J_{s-\frac{1}{2}}^{F,n}.
\end{aligned}
\end{equation} 
Similarly as in the proof of Proposition \ref{prop:AP}, $g^{\varepsilon,n+1}_{s-\frac{1}{2},j}$ is replaced by its expression \eqref{S_micro} and $T^{\varepsilon,n}_{s-\frac{1}{2},j}$ is expanded using \eqref{P+Q}. The quantity $\frac{1}{\varepsilon}\left\langle vg^{\varepsilon,n+1}_{s-\frac{1}{2}}\right\rangle_\Delta$ then reads:
\begin{equation}\label{DeltaM2}
\begin{aligned}
    \frac{1}{\varepsilon}\left\langle vg^{\varepsilon,n+1}_{s-\frac{1}{2}}\right\rangle_\Delta =\left\langle vg^{\varepsilon,n}_{s-\frac{1}{2}}\right\rangle_\Delta\frac{e^{-\Delta t/\varepsilon^2}}{\varepsilon}
    &- \left\langle vQ_{s-\frac{1}{2}}^{\varepsilon,n}\right\rangle_\Delta\left(1-e^{-\Delta t/\varepsilon^2}\right)\frac{1}{\Delta x\Delta v}\\
    &- \left\langle vP_{s-\frac{1}{2}}^{\varepsilon,n}\right\rangle_\Delta\left(1-e^{-\Delta t/\varepsilon^2}\right)\frac{1}{\Delta x\Delta v}.
\end{aligned}
\end{equation}
Using definition \eqref{P} of $P_{s-\frac{1}{2},j}^{\varepsilon,n}$ and the definition \eqref{m1Prime} of $m_1'^{\Delta v}$, the third term reduces to:
\begin{equation}\label{DeltaM3}
\begin{aligned}
    \left\langle vP_{s-\frac{1}{2}}^{\varepsilon,n}\right\rangle_\Delta\frac{1}{\Delta x\Delta v}&=
    \langle v^2M\rangle_\Delta\frac{\rho_{s}^{\varepsilon,n}-\rho_{s-1}^{\varepsilon,n}}{\Delta x} + E_{s-\frac{1}{2}}\rho_{s-\frac{1}{2}}^{\varepsilon,n}\left\langle v\frac{M_{\cdot+1}-M_{\cdot-1}}{2\Delta v}\right\rangle_\Delta\\
    &= m_2^{\Delta v}\frac{\rho_{s}^{\varepsilon,n}-\rho_{s-1}^{\varepsilon,n}}{\Delta x} + m_1'^{\Delta v} E_{s-\frac{1}{2}}\rho_{s-\frac{1}{2}}^{\varepsilon,n}\\
    &= J_{s-\frac{1}{2}}^{\varepsilon,F,n}
\end{aligned}
\end{equation}
Finally, plugging \eqref{DeltaM3} and \eqref{DeltaM2} into \eqref{DeltaM1} yields:
\begin{equation*}
    \Delta m^{n+\frac{1}{2}} = -\left\langle vg_{s-\frac{1}{2}}^n\right\rangle_\Delta\frac{e^{-\Delta t/\varepsilon^2}}{\varepsilon} + \frac{1-e^{-\Delta t/\varepsilon^2}}{\Delta x\Delta v}\left\langle Q_{s-\frac{1}{2}}^n\right\rangle_\Delta - e^{-\Delta t/\varepsilon^2}J_{s-\frac{1}{2}}^{\varepsilon,F,n}+\left(J_{s-\frac{1}{2}}^{\varepsilon,F,n}-J_{s-\frac{1}{2}}^{F,n}\right).
\end{equation*}
\end{proof}
\begin{Remark}
    The proof only holds in the context of the toy problem \eqref{DomainMassVar}. However, it can be extended to a more general setting seeing that the mass variation occurs at all interfaces between kinetic and fluid subdomains. Namely, $$\Delta t\Delta m^{n+\frac{1}{2}}=\sum_{\alpha\in\mathcal{S}}\beta\left(-\left\langle vg_{\alpha}^n\right\rangle_\Delta\frac{e^{-\Delta t/\varepsilon^2}}{\varepsilon} + \frac{1-e^{-\Delta t/\varepsilon^2}}{\Delta x\Delta v}\left\langle Q_{\alpha}^n\right\rangle_\Delta - e^{-\Delta t/\varepsilon^2}J_{\alpha}^{F,n}+\left(J_{\alpha}^{\varepsilon,F,n}-J_{\alpha}^{F,n}\right)\right),$$
    where $\mathcal{S}$ is the set of interfaces between kinetic and fluid subdomains and $\beta=\pm1$ depends on the orientation of the subdomains. Another important observation is that thanks to \eqref{LimitG} and assuming that $\rho^\varepsilon\underset{\varepsilon\rightarrow0}{\longrightarrow}\rho$, the mass variation converges to $0$ as $\varepsilon$ tends to $0$. 
\end{Remark}
\section{Numerical simulations}\label{sec:NumSim}
In the following, unless specified otherwise, the phase-space is discretized as follows:
$$N_v = 256,\quad N_x = 100,\quad v_\star =8,\quad x_\star = \pi,\quad \Delta t=10^{-4}.$$
The same time step is used for all schemes. Note that since the limit scheme is explicit, its stability is therefore guaranteed under a parabolic condition: $\Delta t\leq C\Delta x^2$. Let us assume that the electrical field is the gradient of a potential V: $E=-\dx V$. To satisfy the periodicity of the domain, we choose $V(x)=-\frac{\sin(2x)}{4}$ so $E(x)~=\frac{1}{2}\cos(2x).$ 
We also set 
\begin{equation}\label{InitDataLocEqui}
f_0^{1}=\frac{1}{\sqrt{2\pi}}e^{-v^2/2}(1+\cos(2x)),
\end{equation} an initial data at local equilibrium in velocity and 
\begin{equation}\label{InitDataFarEqui}
f_0^{2}=\frac{4}{\sqrt{2\pi}}v^4e^{-v^2/2}(1+\cos(2x)),
\end{equation} an initial data far from the local equilibrium in velocity.
Finally, we consider four configurations:
\begin{itemize}
    \item Case 1: $E = 0$, with initial data \eqref{InitDataLocEqui};
    \item Case 2: $E \neq0$, with initial data \eqref{InitDataLocEqui};
    \item Case 3: $E = 0$, with initial data \eqref{InitDataFarEqui};
    \item Case 4: $E \neq0$, with initial data \eqref{InitDataFarEqui}.
\end{itemize}
Each configuration is tested for different values of $\varepsilon$.

\subsection{The full kinetic scheme}
\paragraph{Convergence towards the drift-diffusion equation.} 
Let us first numerically investigate the AP property of the \eqref{Micro}-\eqref{Macro} scheme. We consider this analysis for the Cases 1 and 2. The results can be found in Figure \ref{fig:APE=0} for Case 1 and in Figure \ref{fig:APEneq0} for Case 2. We can observe a convergence of the kinetic scheme to the limit one as $\varepsilon\rightarrow0$. In particular, the curves for $\varepsilon=0.05$ and $10^{-4}$ overlap and are close to the limit case. This validates the asymptotic consistency of the \eqref{Micro}-\eqref{Macro} scheme. The stability is numerically verified as the same $\Delta t$ is used for every $\varepsilon$.

\begin{figure}
    \centering
    \includegraphics[width=0.8\linewidth]{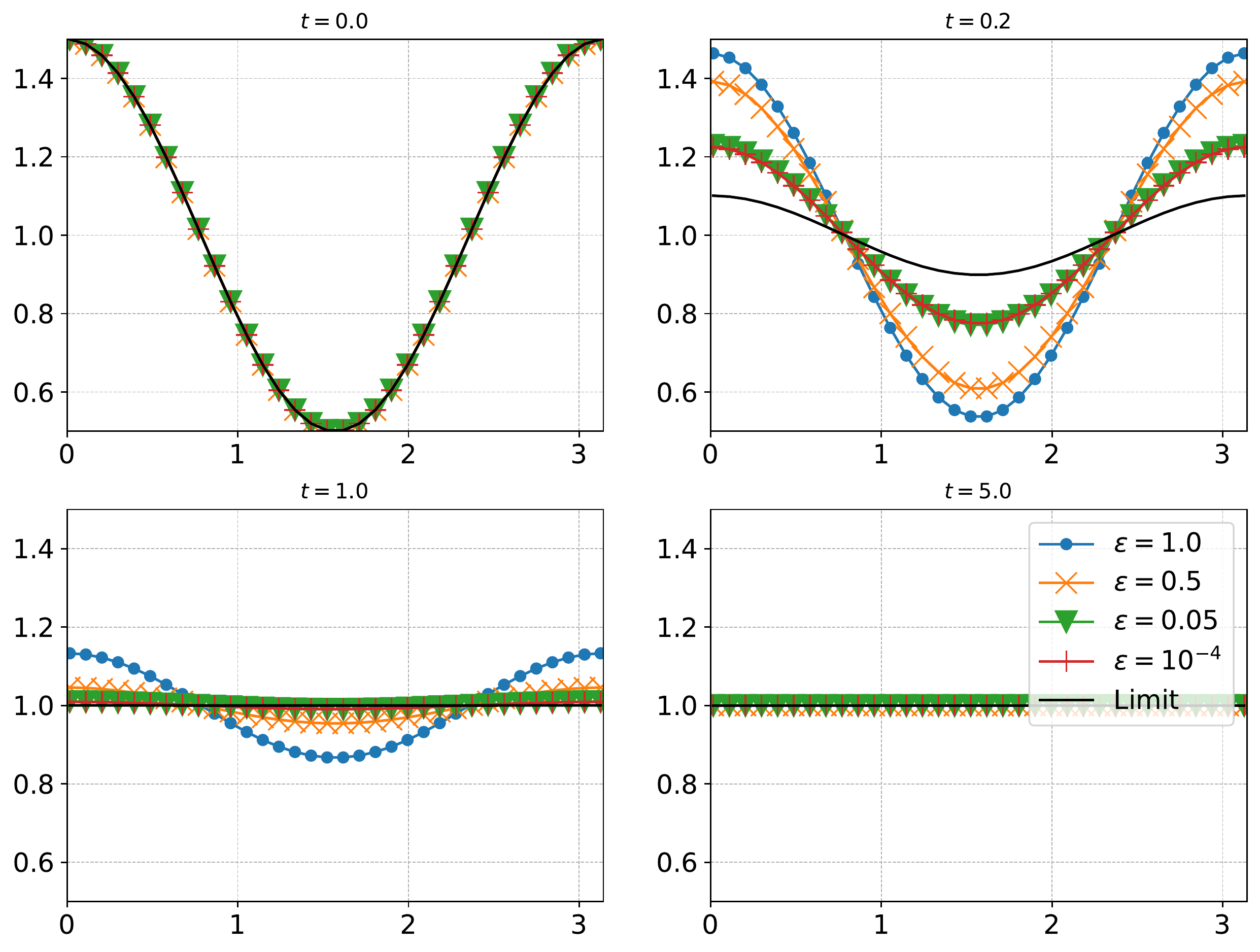}
    \caption{Case 1. Comparison of the solution of the limit scheme \eqref{S_lim} with the solution obtained with the \eqref{Micro}-\eqref{Macro} scheme with different $\varepsilon$, $t= 0.0,\,0.2,\,1.0$ and $5.0$.}
    \label{fig:APE=0}
\end{figure}

\begin{figure}
    \centering
    \includegraphics[width=0.8\linewidth]{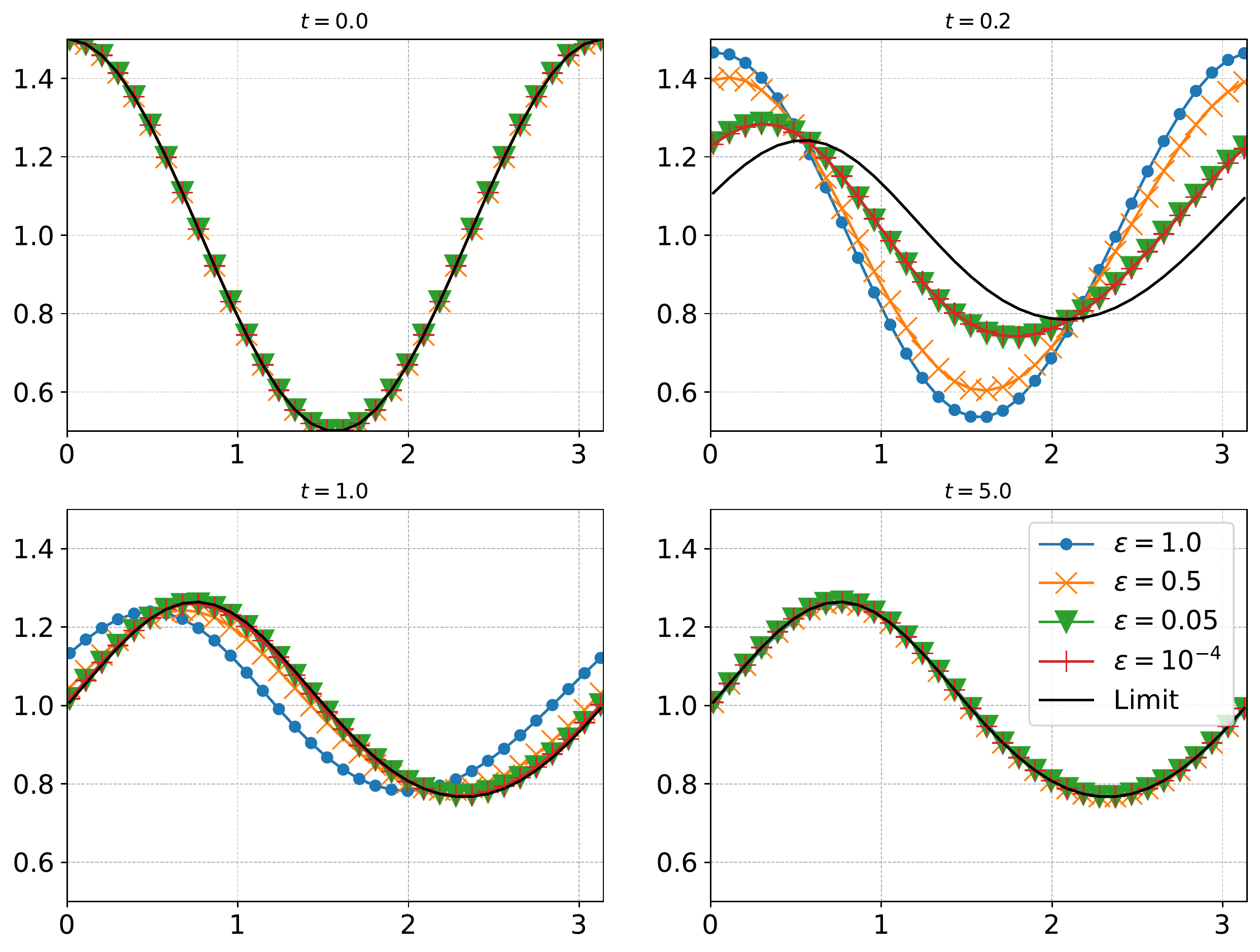}
    \caption{Case 2. Comparison of the solution of the limit scheme \eqref{S_lim} with the solution obtained with the \eqref{Micro}-\eqref{Macro} scheme with different $\varepsilon$, $t= 0.0,\,0.2,\,1.0$ and $5.0$.}
    \label{fig:APEneq0}
\end{figure}

\paragraph{Long time behaviour.}
The long time behaviour of solutions to \eqref{prob:Peps} have been extensively studied in the past decade. Let us recall the following quantities, namely the local equilibria in velocity and space and the global equilibrium:
$$\M(v)= \frac{e^{-|v^2|/2}}{(2\pi)^{d/2}},\quad\phi(x)= \frac{e^{-V(x)}}{\int_{\Omega_x} e^{-V(x)}\,\dd x},\quad F(x,v)= M_0\phi(x)\mathcal{M}(v).$$
When one considers models such as \eqref{prob:Peps}, there are various ways to show that there exists $\kappa(\varepsilon)>0$ and $C(\varepsilon)>0$, such that if $f^\varepsilon$ is solution to \eqref{prob:Peps},
\begin{equation}\label{eq:hyporate}
    ||f^\varepsilon(t)-F||_\mathcal{V}\leq C(\varepsilon)||f_0-F||_\mathcal{V}e^{-\kappa(\varepsilon) t},
\end{equation}
where $\mathcal{V}$ is an appropriate functional space. A proof of \eqref{eq:hyporate} was done in \cite{HerauNier2004} in a setting without electric field. In recent years, the literature on the subject expanded a lot. Robust and systematic methods were developed to show the convergence to an equilibrium. Those are called hypocoercivity methods. A general abstract framework for the $H^1$ norm has been given in the memoir by Villani \cite{VillaniHypo2009}. These methods often have the drawback of requiring regularity on the initial data. However the techniques have been adapted to consider only weighted $L^2$ initial data. More recently, an $L^2$-hypocoercivity method has been developed in \cite{DolbeaultMouhotSchmeiser2015} for linear kinetic equations. We also refer to \cite{AddalaDolbeaultLiTayeb2019} where $L^2$-hypocoercivity is shown for a more general kinetic equation. Both a self-consistent potential given by the Poisson equation and an exterior potential are considered and such a model is closer to the physics of semiconductors. From a numerical point of view, recovering such long-time behaviour at the discrete level is a significant property to obtain. In recent papers, hypocoercivity methods were adapted to the discrete setting using finite differences \cite{DujardinHerauLaffitte2018}, finite elements \cite{Georgoulis2018} and finite volumes \cite{BessemoulinHerdaRey2019}. 

\noindent Following these ideas, we want to observe the convergence of the \eqref{S_micro}-\eqref{S_macro} scheme to equilibrium in a large time scale. Figure \ref{fig:SnapshotDist} shows the evolution of the distribution as time increases (Case 3, $\varepsilon=1.0$). In particular, the numerical solution indeed seems to converge to equilibrium. 
\begin{figure}
    \centering
    \includegraphics[width=1.0\linewidth]{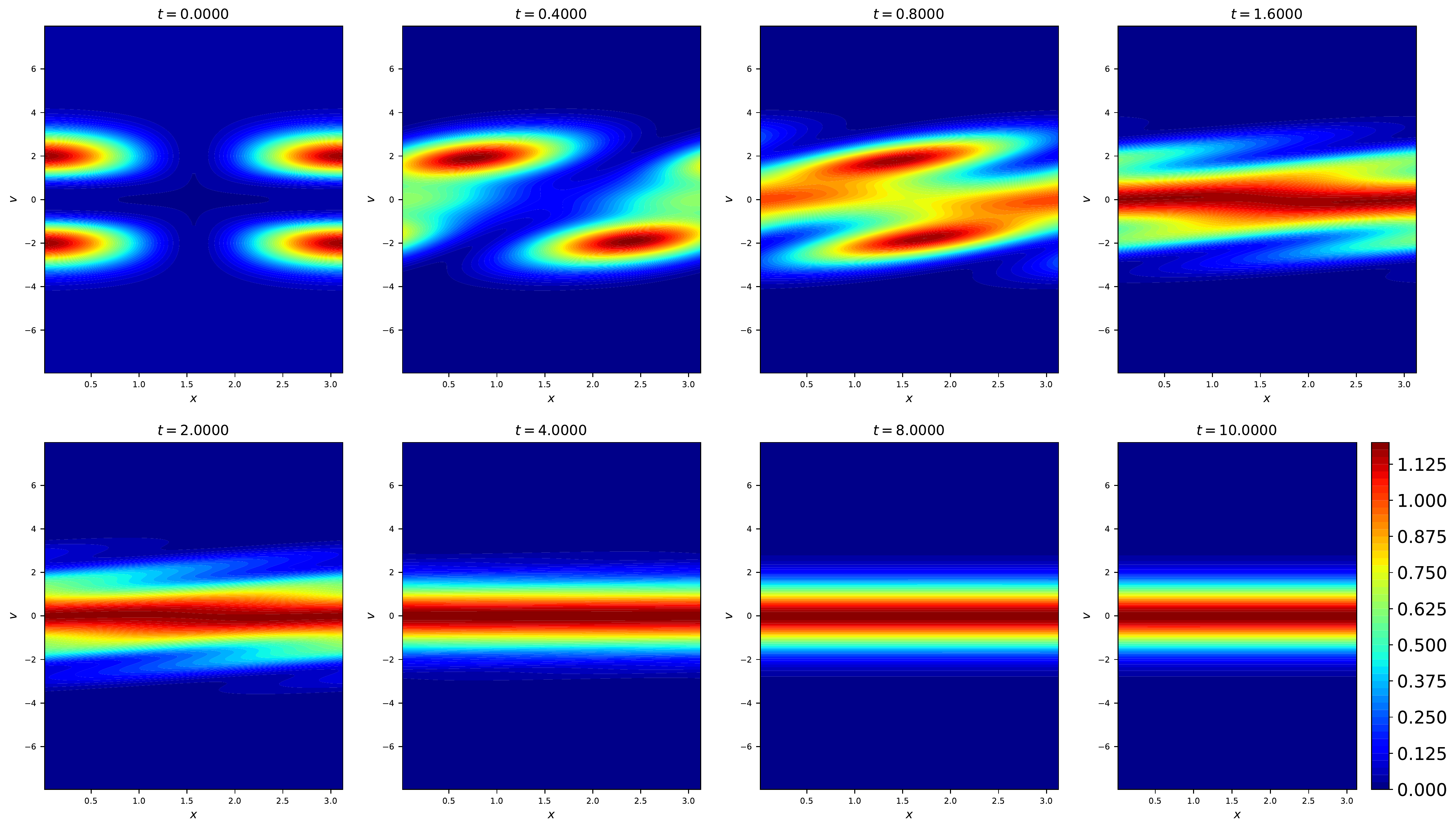}
    \caption{Case 3. Snapshots of the distribution function computed with the scheme \eqref{S_micro}-\eqref{S_macro}, $\varepsilon=1.0$.}
    \label{fig:SnapshotDist}
\end{figure}
Let us introduce the following discrete norm for $f=\left(f_{ij}\right)_{ij}$:
\begin{equation}\label{DiscreteNormGamma}
    ||f||_{\Delta}=\sqrt{\sum_{(i,j)\in\mathcal{I}\times\mathcal{J}} f_{ij}^2\frac{\Delta x\Delta v}{F_{ij}}},
\end{equation}
where $\left(F_{ij}\right)_{ij}$ is a discretization of the global equilibrium $F$, $F_{ij}=F(x_i,v_j)$. For $\left(f_i\right)_{i\in\mathcal{I}}$, we also denote by $||f||_2=\sum_{i\in\mathcal{I}}f_i^2\Delta x$ the discrete $L^2$-norm in position. We now investigate the rate of convergence of the following discrete norms: 
\begin{equation}\label{Norms}
    ||f-F||_{\Delta},\quad ||g||_{\Delta},\quad ||\rho^\varepsilon-\langle F\rangle_\Delta||_2\quad\text{and}\quad ||\rho-\langle F\rangle_\Delta||_2,
\end{equation}
where $\rho^\varepsilon$ is the solution obtained with the kinetic scheme and $\rho$ is obtained with the limit scheme. We consider Case 2. On Figure \ref{fig:HypoE=0} we choose $\varepsilon=1.0$ and $0.1$, and show the norms \eqref{Norms} as functions of time in semilog scale. The exponential convergence of the various norms is clear. Moreover, the rates $r_\varepsilon$ observed are $r_{1}=-2.07$ and $r_{0.1}=-7.65$. The rate $r_\varepsilon$ increases as the Knudsen number gets smaller. In particular we observe the same rate of convergence between the fully kinetic scheme and the limit one for small values of $\varepsilon$.

\begin{figure}
    \centering
    \begin{tabular}{ll}
    \includegraphics[width=0.45\linewidth]{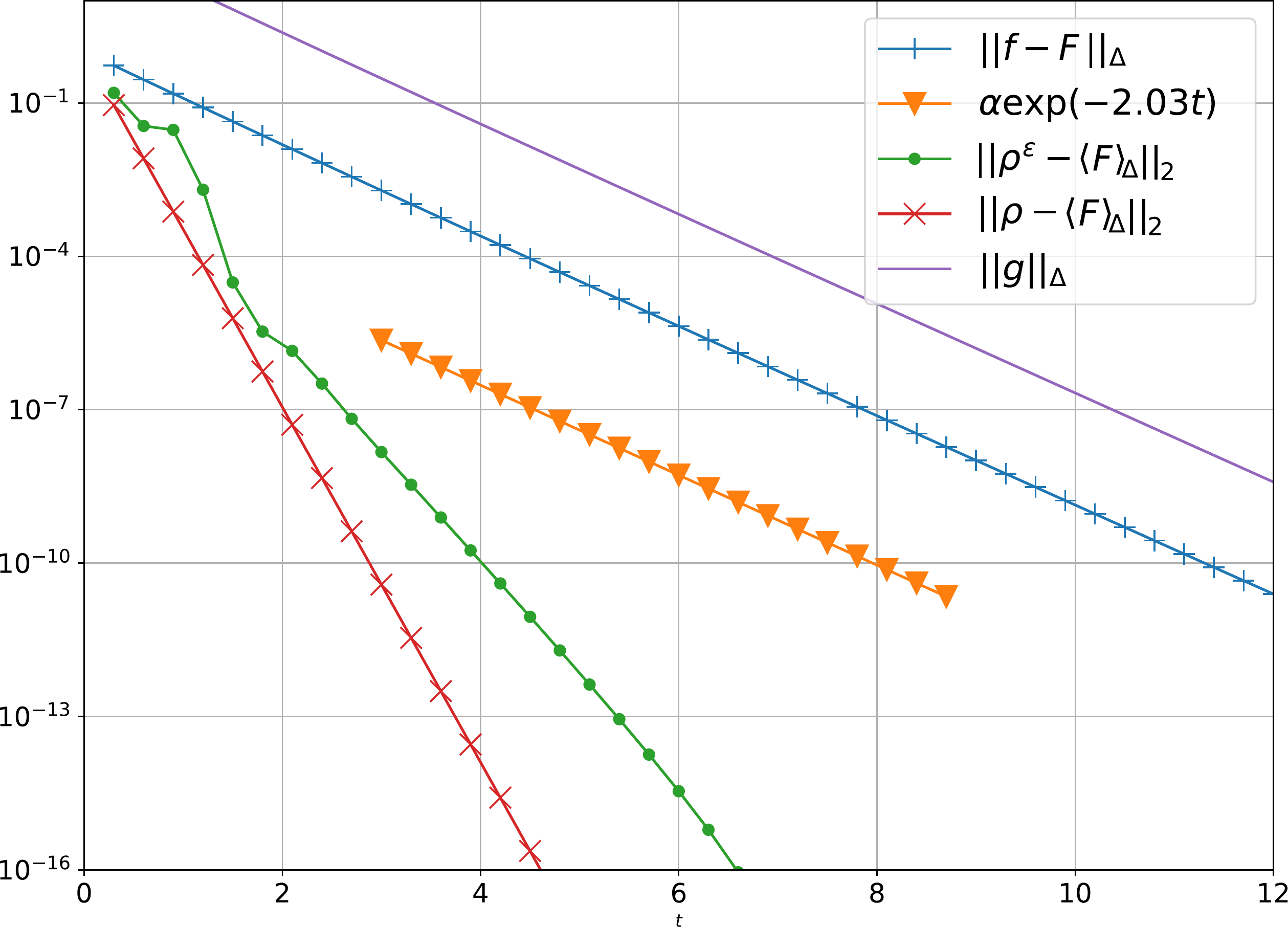}
    &
    \includegraphics[width=0.45\linewidth]{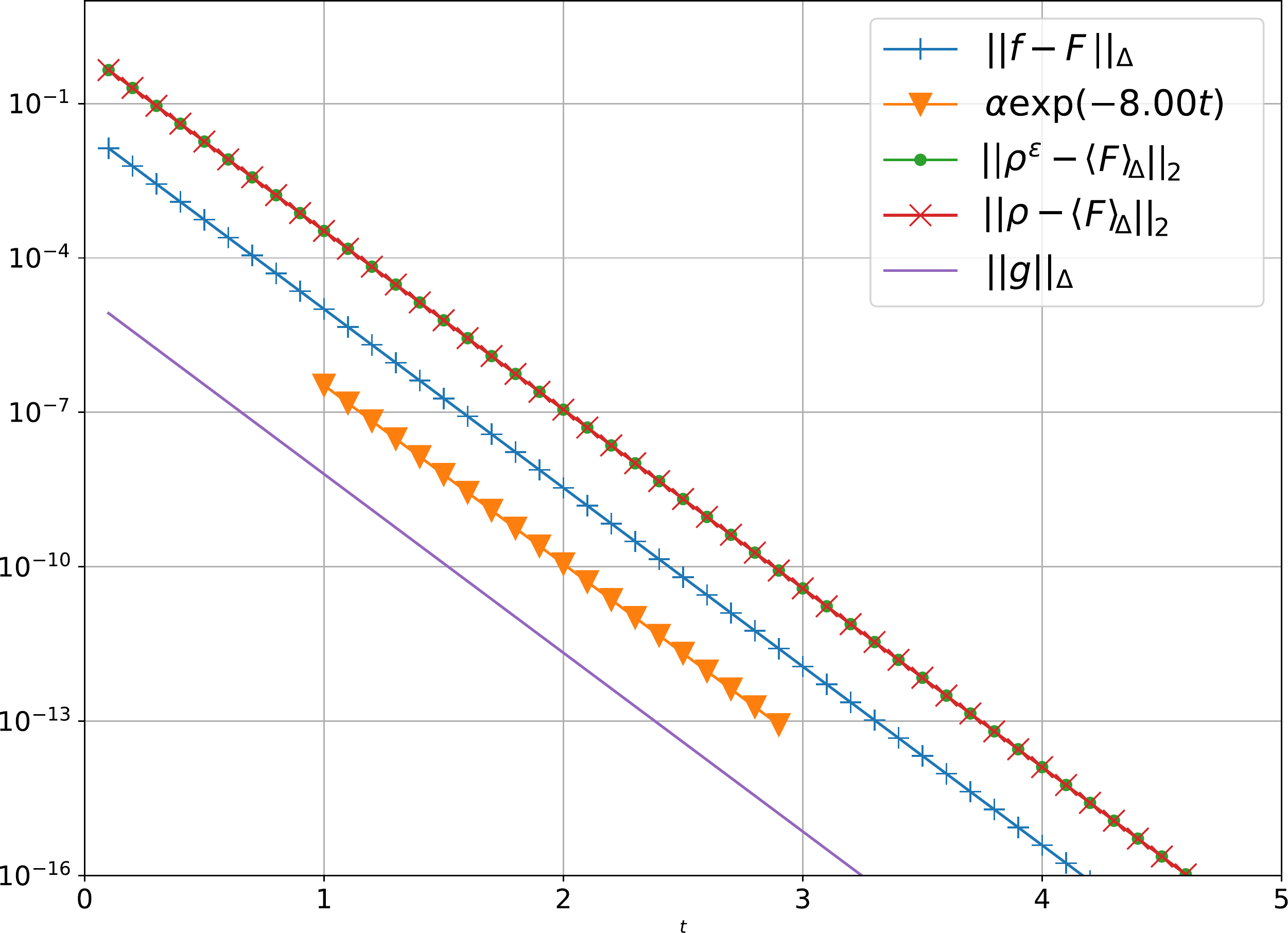}
    \end{tabular}
    \caption{Case 3. Time evolution of the norms \eqref{Norms} computed with the fully kinetic scheme and limit scheme, $\varepsilon = 1.0$ (left), $\varepsilon = 0.1$ (right).}
    \label{fig:HypoE=0}
\end{figure}

Let us point out that in the case of a nonzero electric field, we do not recover the same convergence to equilibrium. Indeed, our numerical scheme is not well-balanced, i.e. designed to preserve steady states. As a consequence, the numerical solution only converges to an equilibrium that is an approximation of the steady state. Figure \ref{fig:HypoEfield} shows the convergence to the equilibrium as the number of cells in space increases.

\begin{figure}
    \centering
    \includegraphics[width=0.5\linewidth]{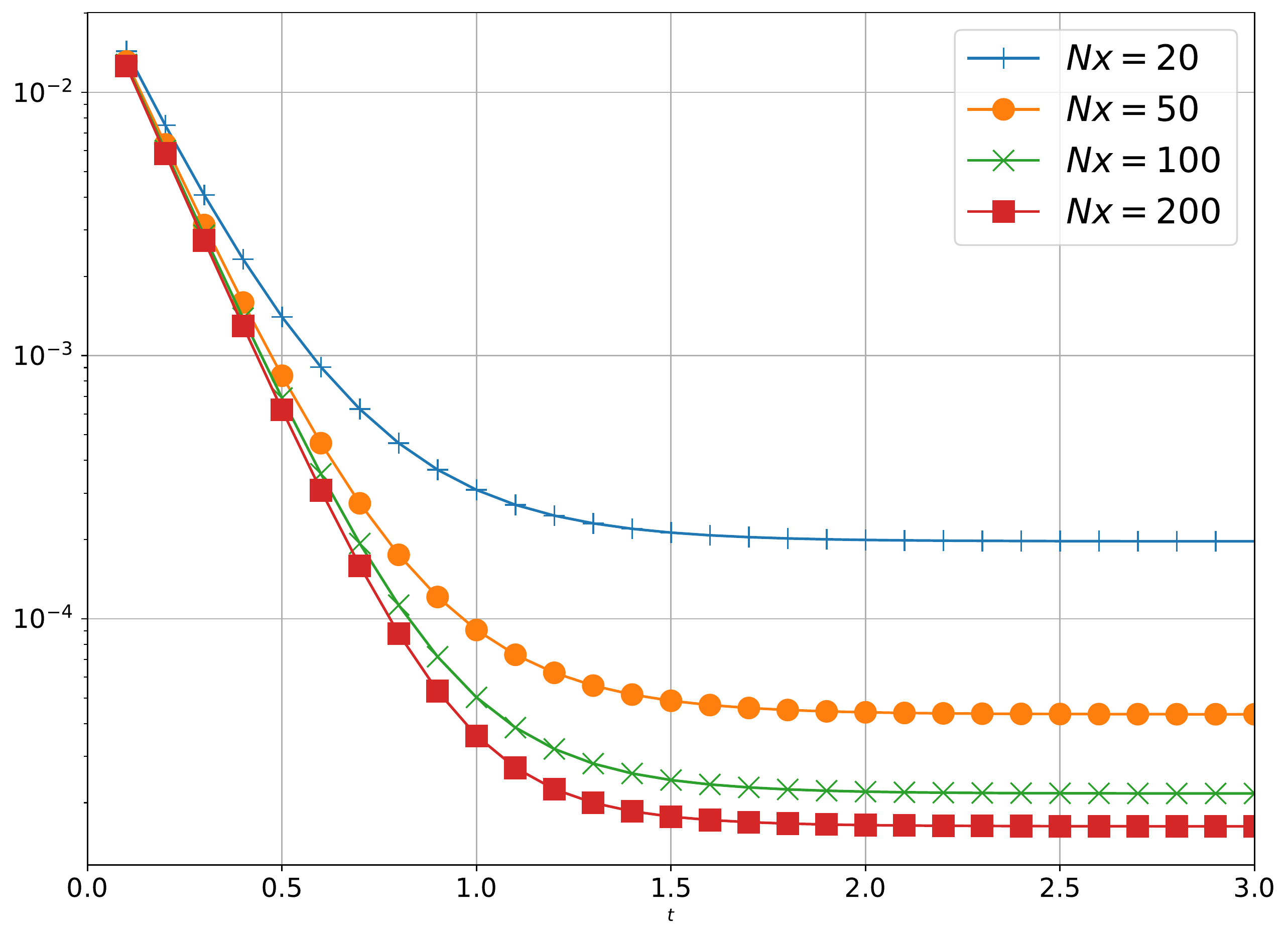}
    \caption{Case 4. Time evolution of the norm $||f^\varepsilon-F\,||_{\Delta}$ computed with the fully kinetic scheme for $N_x=20$, $50$, $100$ and $200$, $\varepsilon = 0.1$.}
    \label{fig:HypoEfield}
\end{figure}

\subsection{Properties of the hybrid scheme}
\paragraph{Choice of the coupling parameters.} 
Before investigating the properties of the hybrid scheme, a natural question is the choice of the coupling parameters. Indeed, as we have seen earlier, that choice has an impact on the conservation of mass. The smaller the parameters, the more one can control this variation. However, the bigger the parameters are, the faster is the resulting hybrid scheme as one allows more fluid cells to appear. Therefore, one must find a good balance between accuracy and computation time. To illustrate how the macroscopic indicator behaves, we compute it without updating the state of the cells. Figure \ref{fig:Indicators} shows the indicator compared to the difference between the kinetic and fluid densities. One can observe that this indicator behaves as expected. When the kinetic and limit densities are close, the indicator is also small. Regarding the norm of $g^\varepsilon$, its behaviour is also expected. Indeed, we chose an initial data far from the local equilibrium in velocity and therefore, the norm can be high even if the densities are close (See first column, third row in Figure \ref{fig:Indicators}). Lastly, both the macroscopic indicator and the norm of $g^\varepsilon$ tend to 0 as time increases. As a consequence, the closer to the equilibrium the solution is, the more fluid cells will appear.

\begin{figure}
    \centering
    \includegraphics[width=\linewidth]{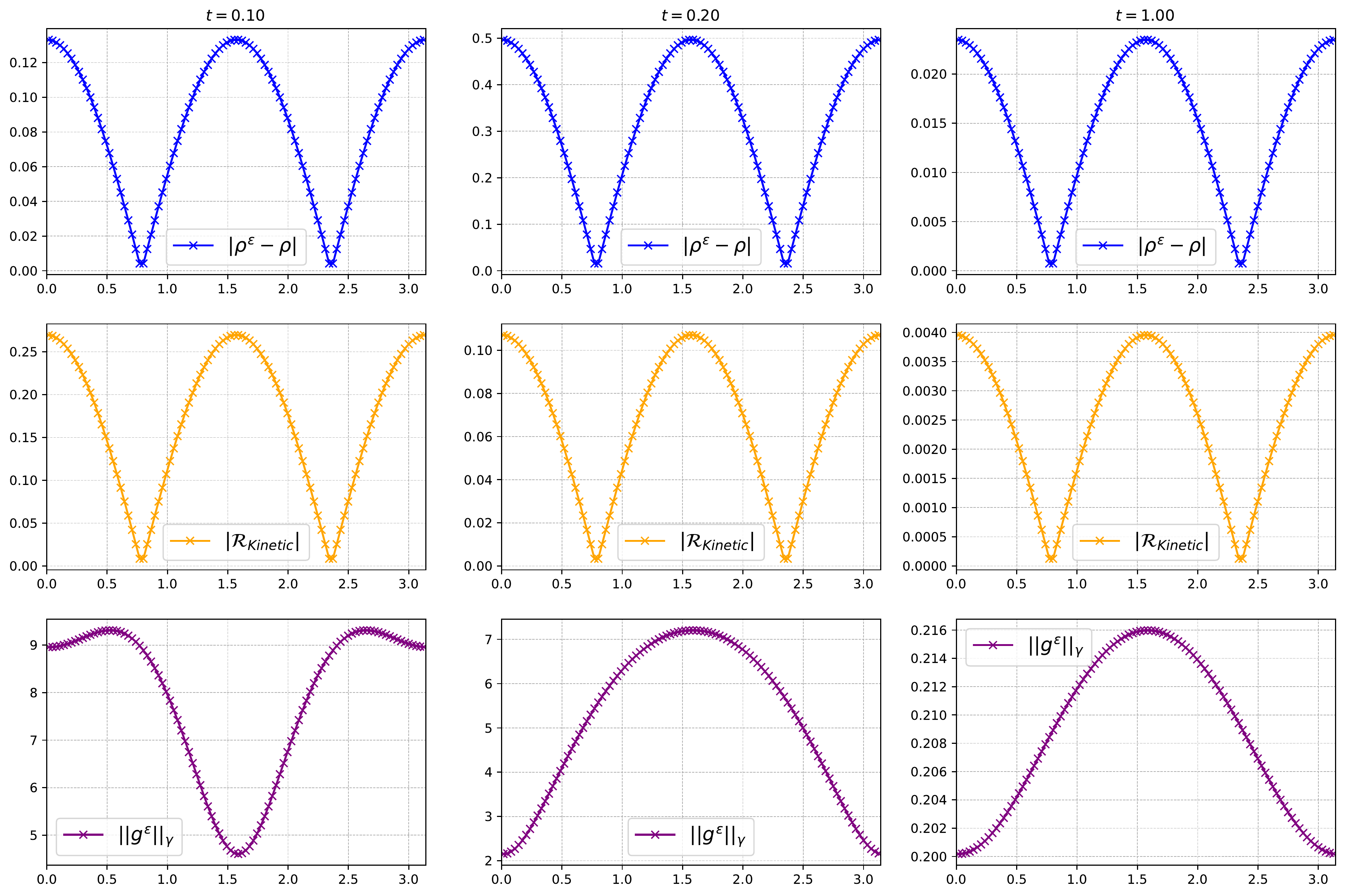}
    \caption{Case 3. Snapshots of the difference between the densities computed with the kinetic and the limit schemes (Top), macroscopic indicator (Middle), $L^2(\mathrm{d\gamma})$ norm of the perturbation $g$ (Bottom), $\varepsilon=0.5$.}
    \label{fig:Indicators}
\end{figure}

\paragraph{Qualitative comparison.}
Let us now compare the kinetic and the hybrid schemes. Figures \ref{fig:HybridCompEfield} and \ref{fig:HybridComp} shows the densities computed by the kinetic, hybrid and limit schemes for cases 3 and 4 with $\varepsilon=10^{-3}$. We can see a good agreement between the three schemes for the smaller Knudsen number. When $\varepsilon$ is large, the solution relaxes slowly toward the local equilibrium and the coupling occurs only for large final time (see Figure \ref{fig:MassVar1}). This behaviour is however expected as the solution relaxes faster to equilibrium as $\varepsilon$ gets smaller.

\begin{figure}
    \centering
    \includegraphics[width=\linewidth]{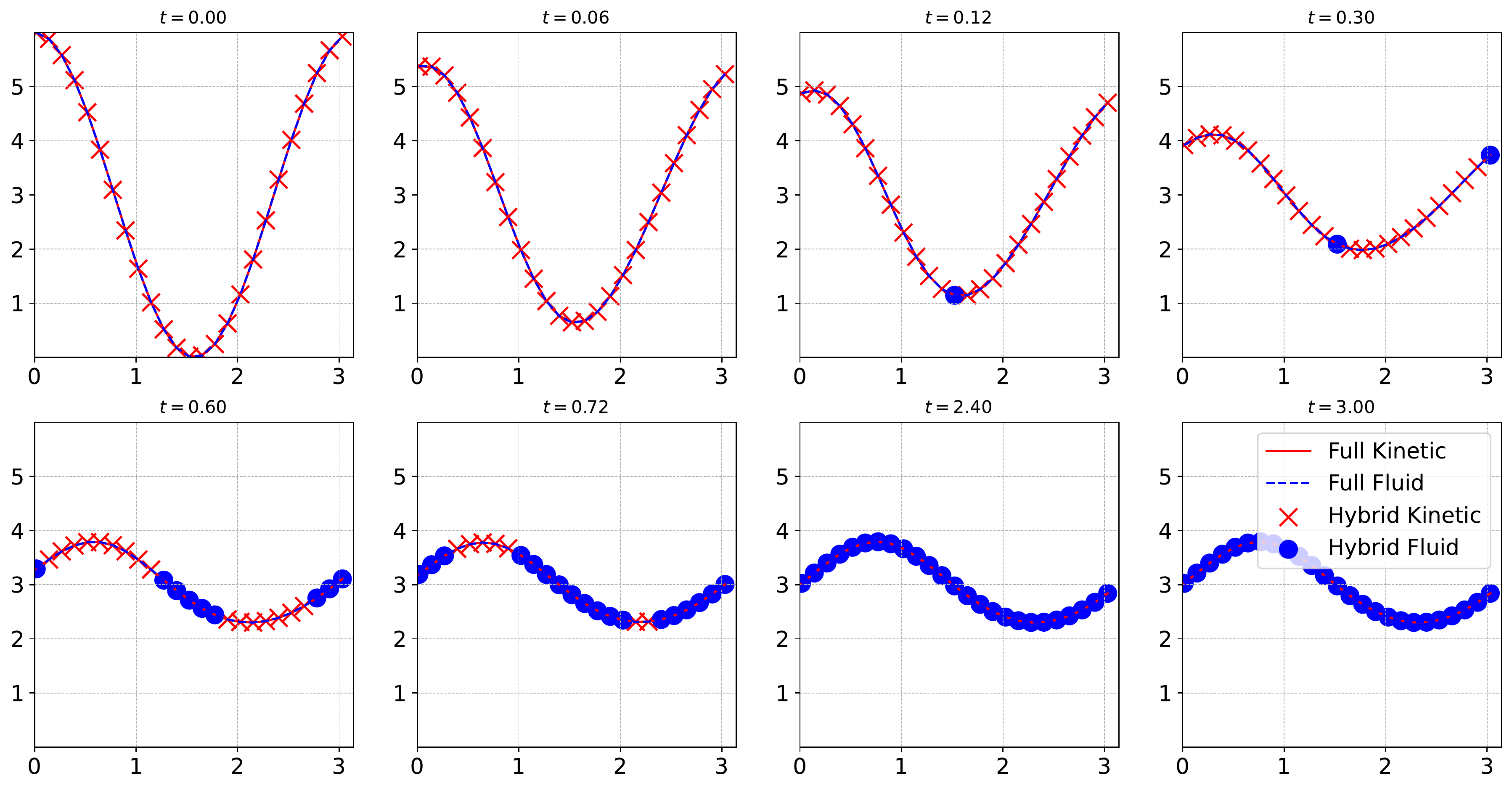}
    \caption{Case 4. Snapshots of the densities computed using the full kinetic, hybrid and limit schemes, $\varepsilon=10^{-3}$, $\eta_0=\delta_0=10^{-4}$.}
    \label{fig:HybridCompEfield}
\end{figure}

\begin{figure}
    \centering
    \includegraphics[width=\linewidth]{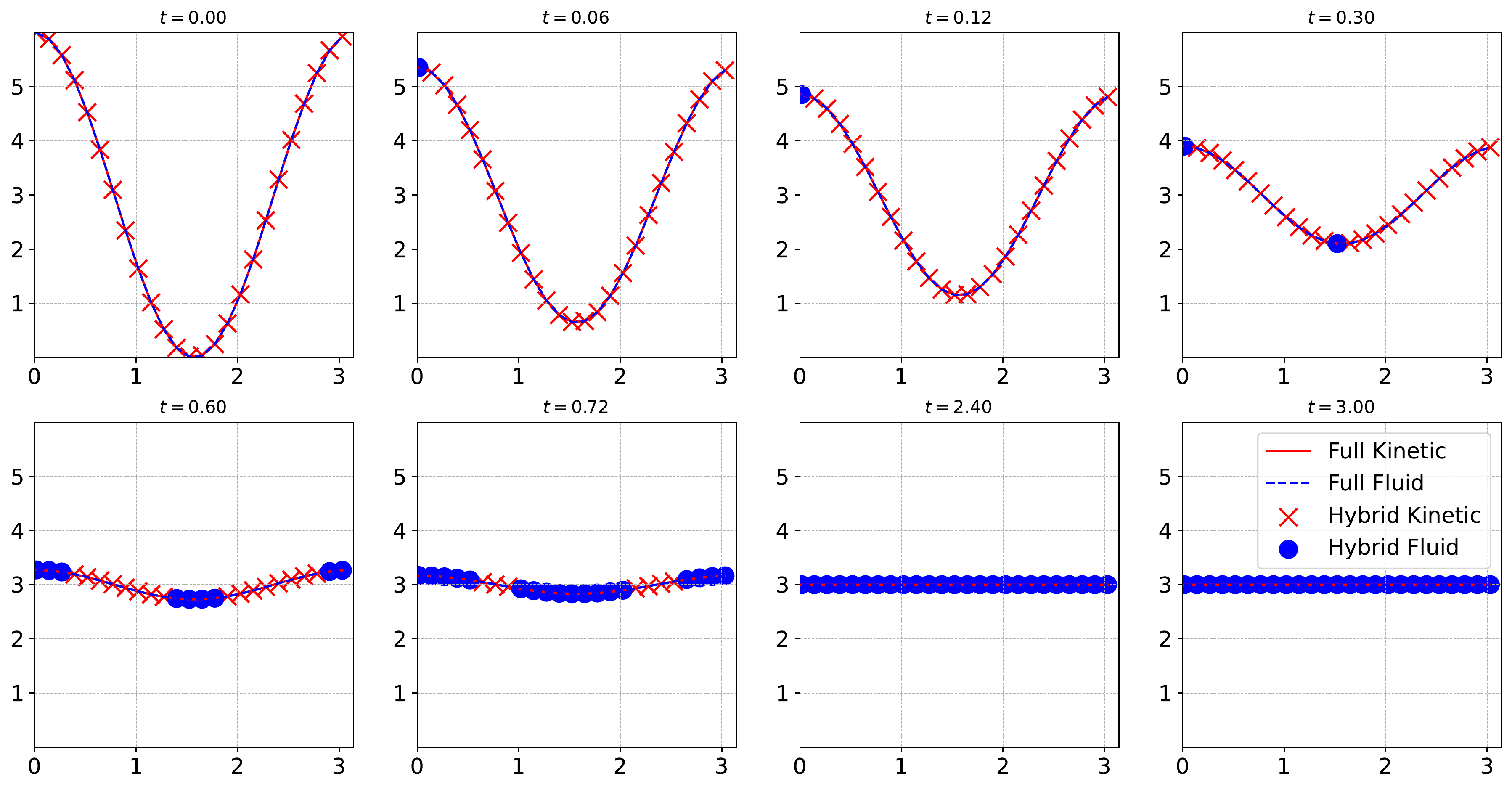}
    \caption{Case 3. Snapshots of the densities computed using the full kinetic, hybrid and limit schemes, $\varepsilon=10^{-3}$, $\eta_0=\delta_0=10^{-4}$.}
    \label{fig:HybridComp}
\end{figure}

\paragraph{Conservation of mass.}
Let us now numerically investigate the conservation of mass. Indeed, we have shown in Lemma \ref{LemmaMass} that the hybrid method is not exactly conservative. However, it becomes conservative asymptotically. In addition, the hybrid method was constructed so that the cells become fluid when the solution is close to a local equilibrium in velocity. As a consequence, the perturbation is small when the coupling occurs and so is the mass variation. In practice, we can observe a mass variation of the order of the machine accuracy. We illustrate the state of the cells and the corresponding mass variation on Figures \ref{fig:MassVar1} and \ref{fig:MassVar2} where we consider Case 3 for $\varepsilon=1.0$ and $10^{-3}$. We also represent the evolution of the state of the cells. In particular, we observe a transition from a full kinetic state to a full fluid one as time increases, which is expected due to the relaxation towards an equilibrium.

\begin{figure}
    \centering
    \includegraphics[width=0.9\linewidth]{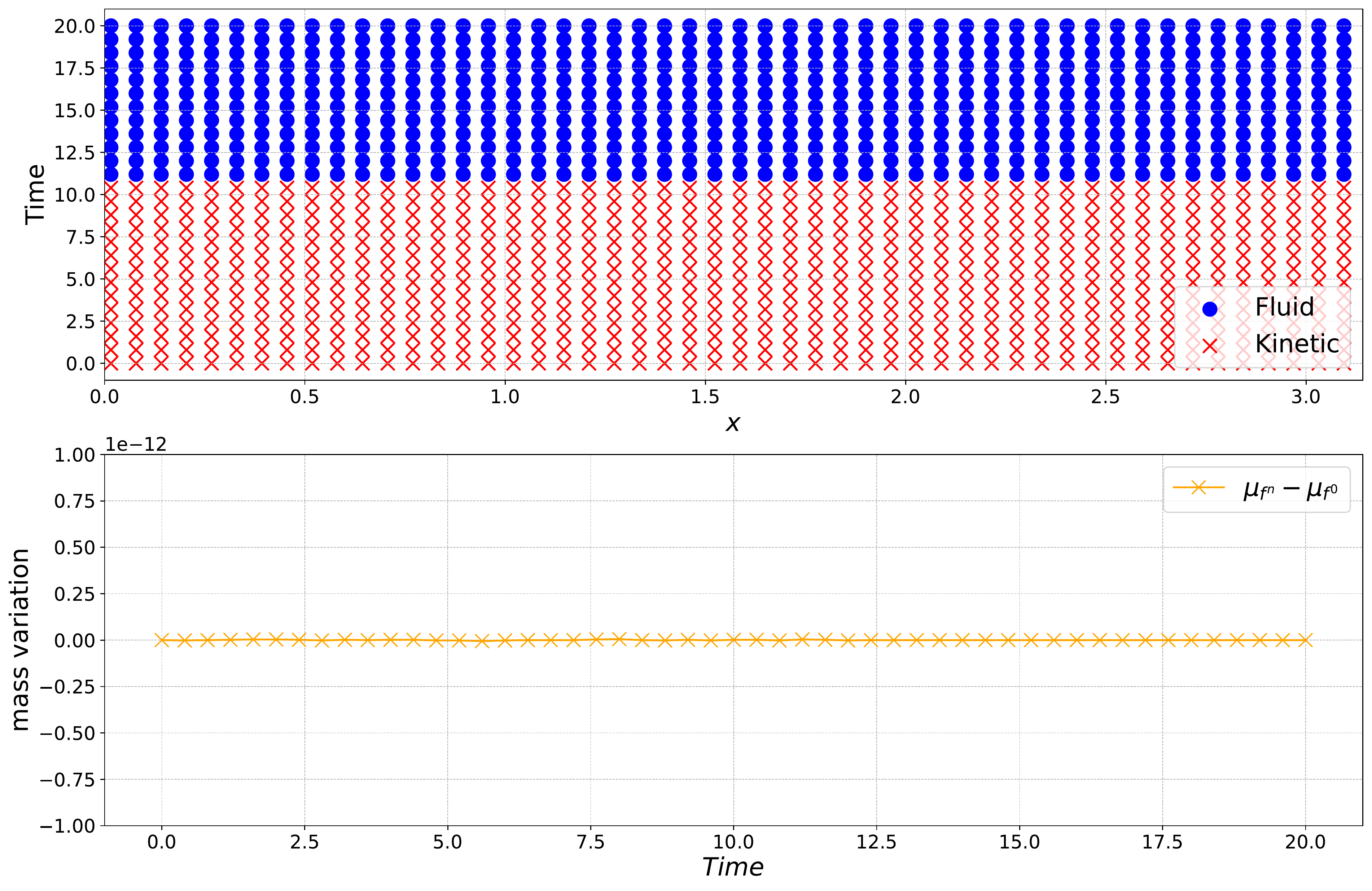}
    \caption{Case 3. Time evolution of the state of the cells (Top) and mass variation (Bottom), $\varepsilon=1.0$, $\eta_0=\delta_0=10^{-4}$.}
    \label{fig:MassVar1}
\end{figure}

\begin{figure}
    \centering
    \includegraphics[width=0.9\linewidth]{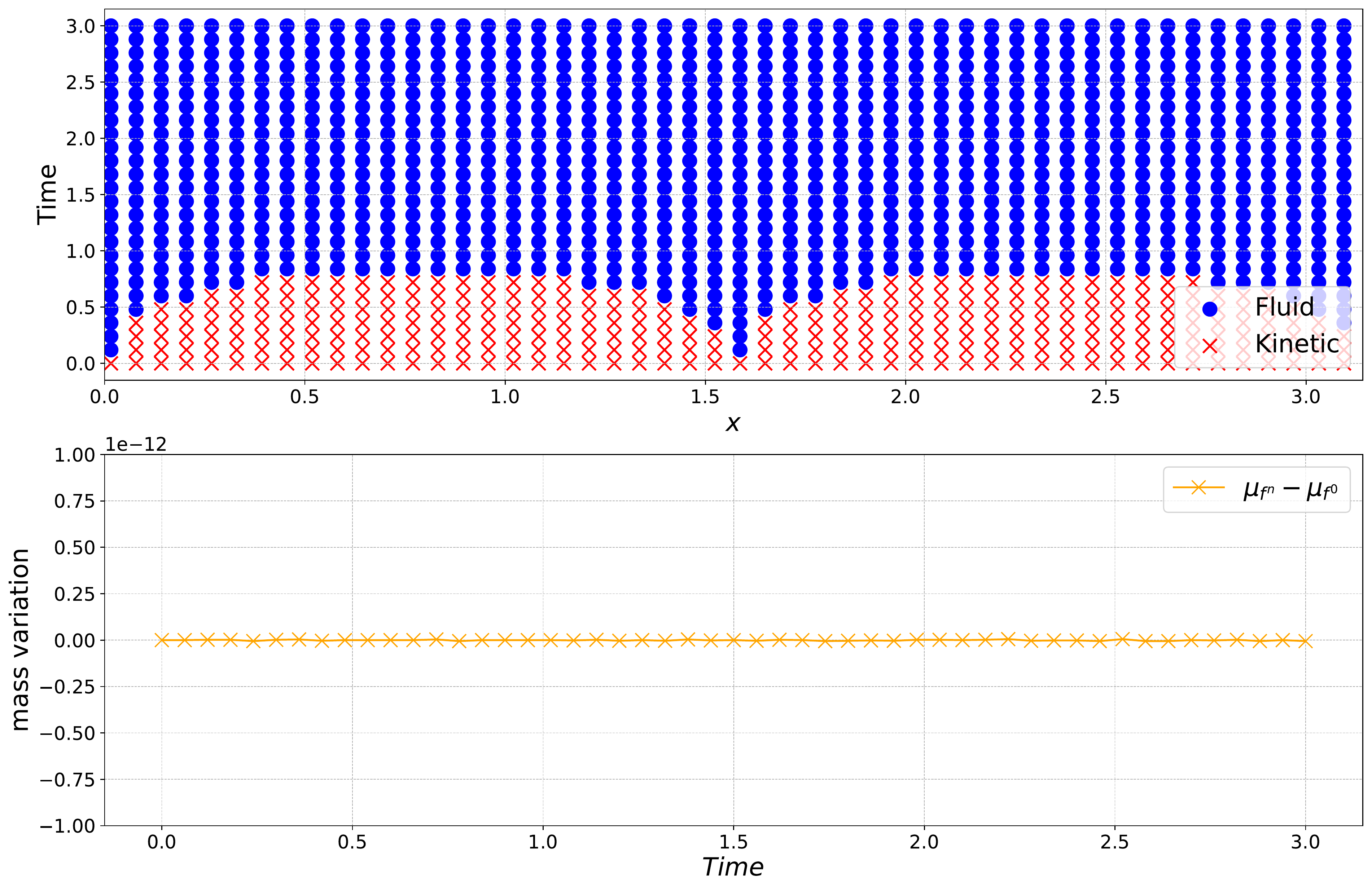}
    \caption{Case 3. Time evolution of the state of the cells (Top) and mass variation (Bottom), $\varepsilon=10^{-3}$, $\eta_0=\delta_0=10^{-4}$.}
    \label{fig:MassVar2}
\end{figure}

\paragraph{Error analysis.}
We are now interested in the error made when using the hybrid method. In particular, we investigate the error between the full kinetic scheme and the hybrid method. The goal of the hybrid method is to be more efficient than the full kinetic solver. However, the gain in computation time comes with a slight loss in accuracy. Let $\rho_{Kinetic}$ be the density computed using the full kinetic scheme and $\rho_{Hybrid}$ be the density obtained from the hybrid scheme. On Figure \ref{fig:HybridError} we compute the error between the two densities in $L^\infty$ norm:  $||\rho_{Kinetic}-\rho_{Hybrid}||_\infty$ at several time steps. The corresponding state of the cells can be found in Figures \ref{fig:MassVar1} and \ref{fig:MassVar2}. Quite expectedly, there indeed is a slight loss in accuracy as soon as the coupling occurs. However, it quickly diminishes as the coupled solution relaxes to equilibrium. Moreover, one can control this error by tuning the coupling parameters. Again, a balance must be chosen between speed and accuracy.

\begin{figure}
    \centering
    \begin{tabular}{ll}
    \includegraphics[width=.45\linewidth]{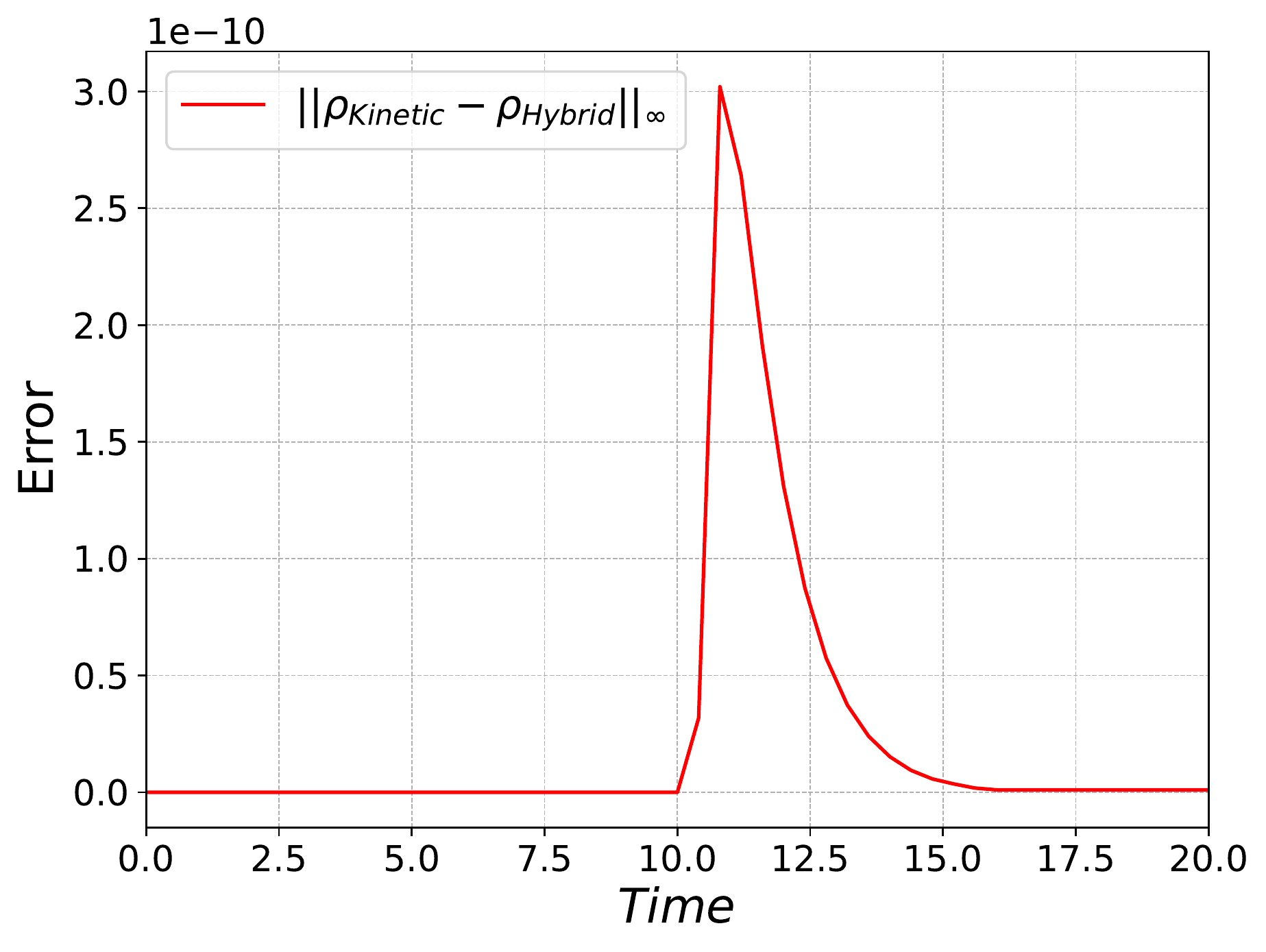}
    &
    \includegraphics[width=.45\linewidth]{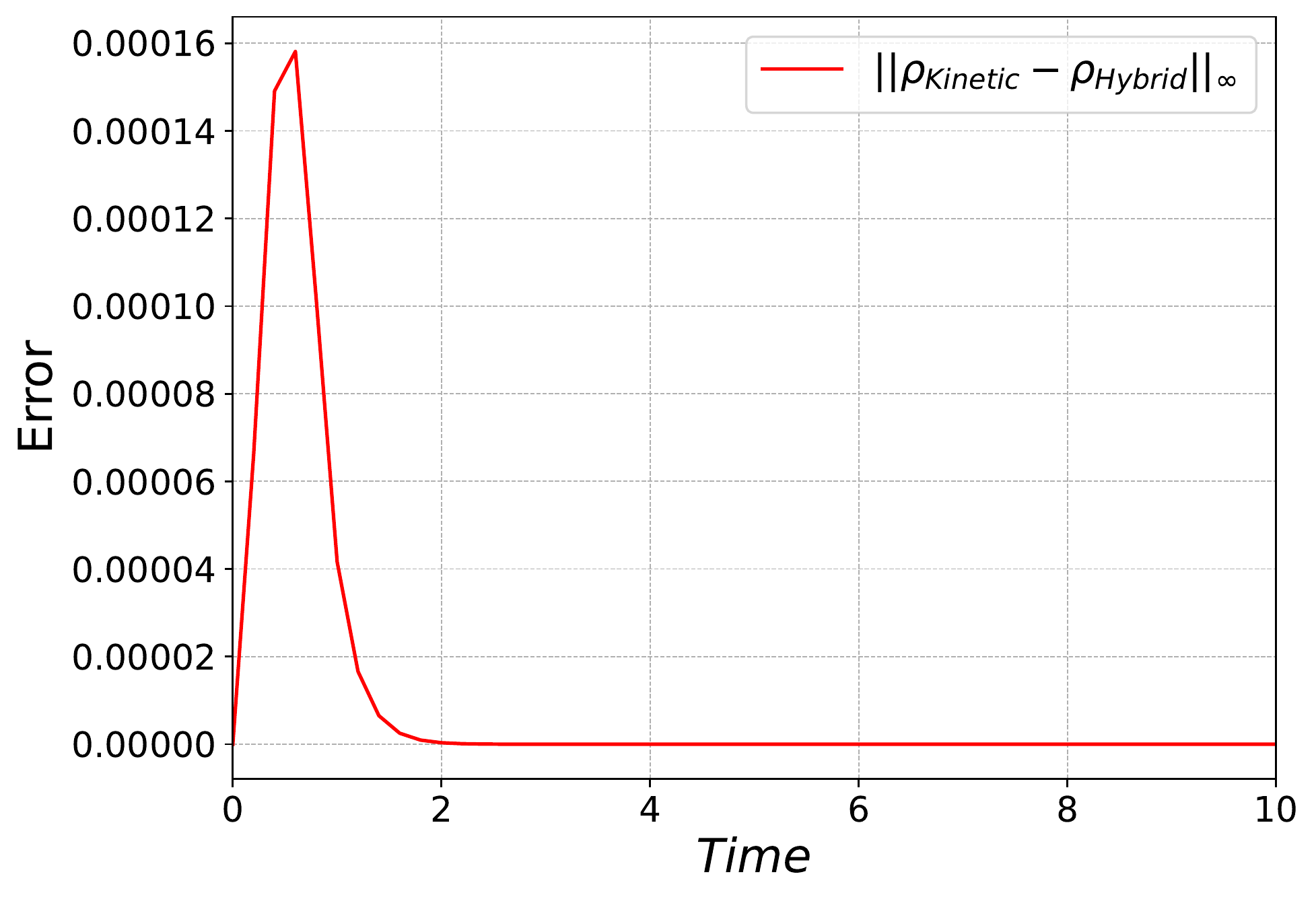}
    \end{tabular}
    \caption{Case 3. Time evolution of the $L^\infty$-error between the kinetic density $\rho_{Kinetic}$ and the hybrid one $\rho_{Hybrid}$, $\varepsilon = 1.0$ (left), $\varepsilon = 0.001$ (right), $\eta_0=\delta_0=10^{-4}$.}
    \label{fig:HybridError}
\end{figure}

\paragraph{Long time behaviour.}
Similarly as for the full kinetic scheme, we are interested in the long-time behaviour of the hybrid scheme. As our scheme is not well-balanced, we shall focus on the case $E=0$. Figure \ref{fig:HypoE=0Hyb} shows the convergence of the norms \eqref{Norms} in the hybrid setting. As the perturbation is not updated when a cell stays fluid and the density is very close to equilibrium, the norms of $g$ and $f$ stagnate when all cells have switched to fluid. However, one can still observe the convergence of the density $\Tilde{\rho}$ towards the global mass.
\begin{figure}
    \centering
    \begin{tabular}{ll}
    \includegraphics[width=0.45\linewidth]{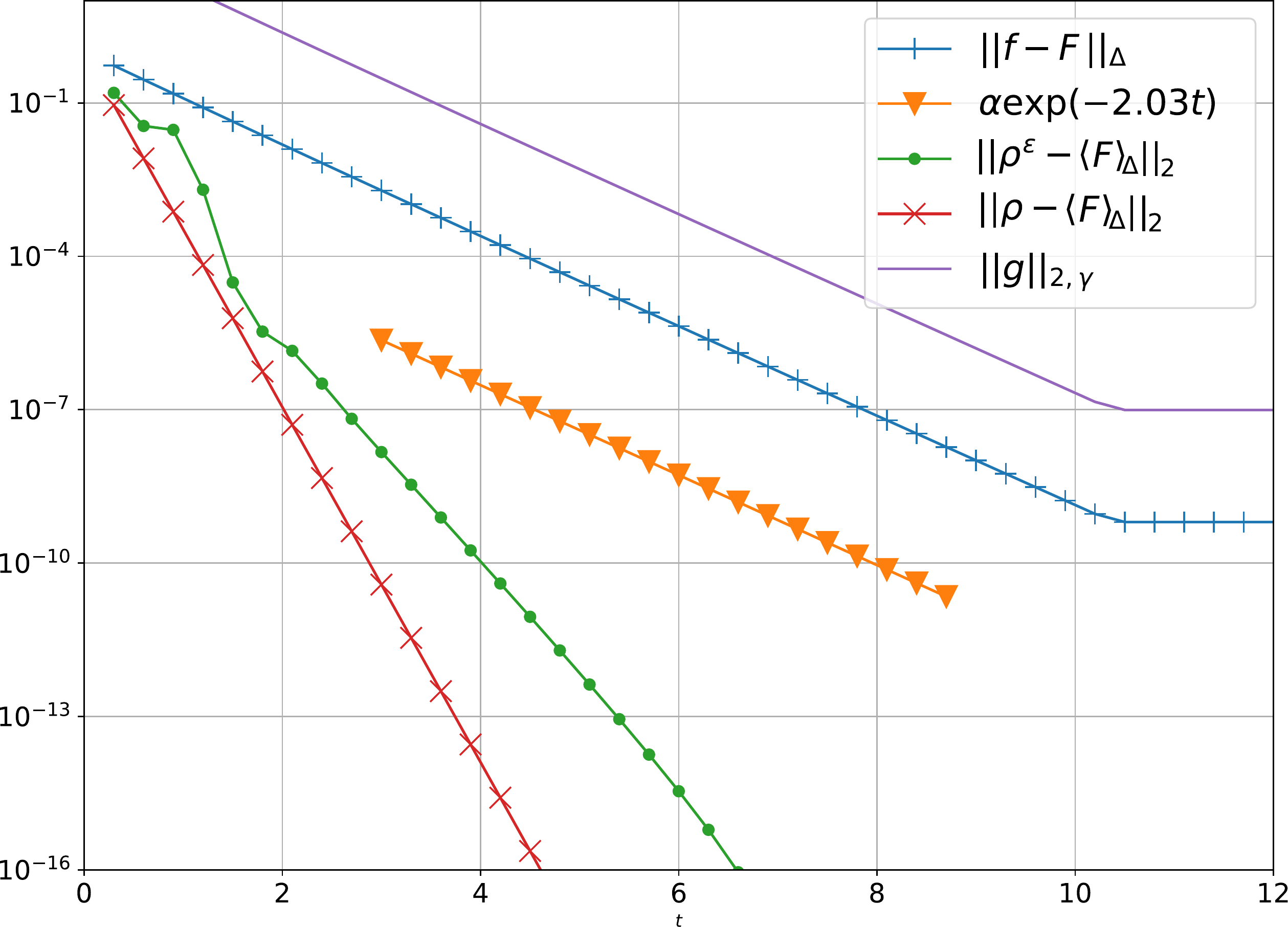}
    &
    \includegraphics[width=0.45\linewidth]{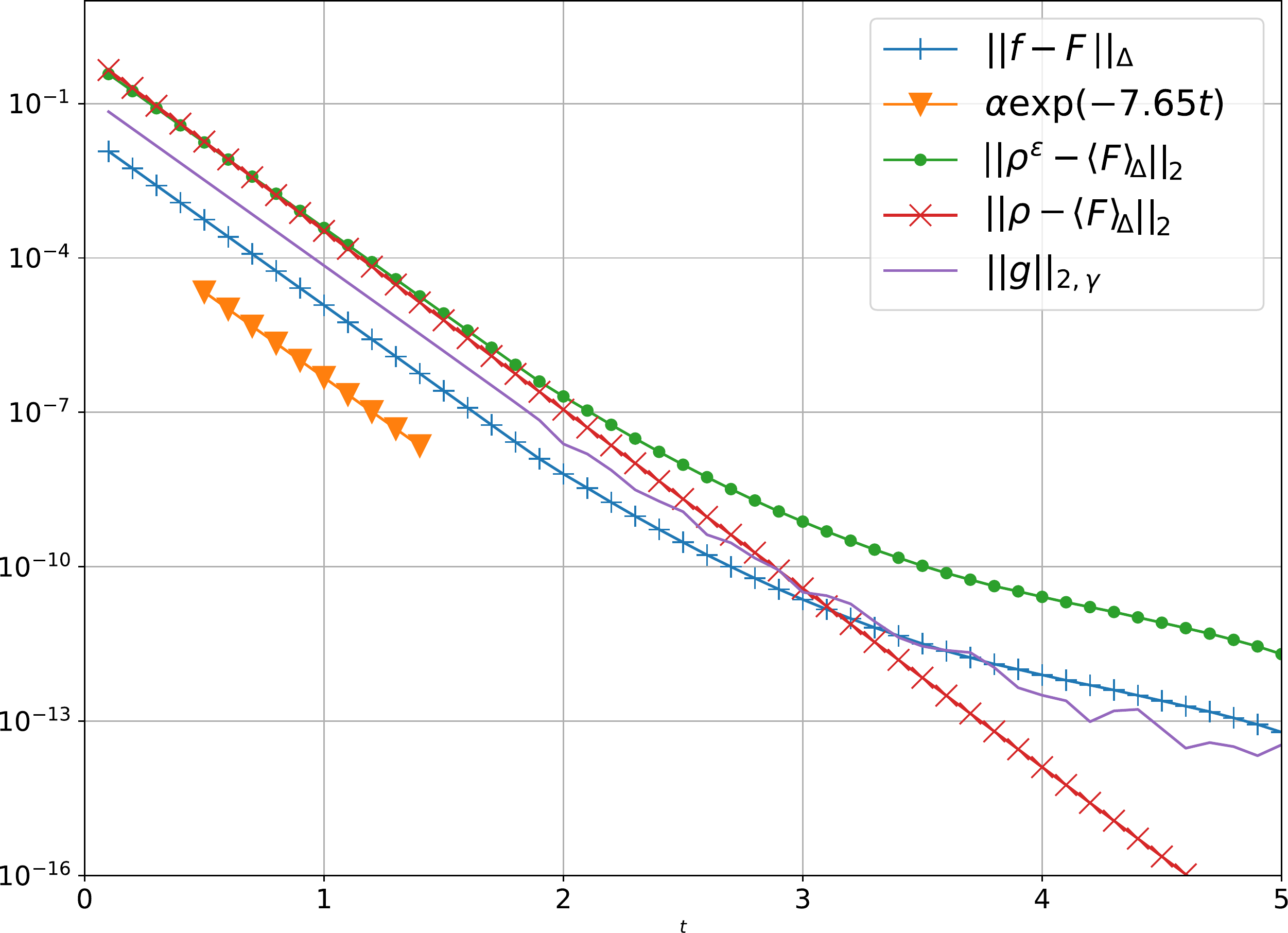}
    \end{tabular}
    \caption{Case 3. Time evolution of the norms \eqref{Norms} computed with the hybrid and limit schemes, $\varepsilon = 1.0$ (left), $\varepsilon = 0.1$ (right), $\eta_0=\delta_0=10^{-4}$.}
    \label{fig:HypoE=0Hyb}
\end{figure}

\paragraph{Computation time.}
Let us now consider the efficiency of the hybrid method. We set $N_x~=~200$ and the final time $T=20.0$ to compare the computation time. Tables \ref{tab:Perf1.0}-\ref{tab:Perf0.1}-\ref{tab:Perf10-3}-\ref{tab:Perf10-6} show the computation time of the full kinetic, hybrid and limit scheme for different test cases with two sets of coupling parameters: $\eta_0=\delta_0=10^{-4}$ and $\eta_0=\delta_0=10^{-3}$. Let us stress out that the computation time is linked to the choice of the coupling parameters. Also note that the same time step, $\Delta t=7.4\times10^{-5}$, is used for the three schemes. 

\begin{table}
    \centering
    \begin{tabular}{|c||c|c|c|c||c|c|c|c|}
        \hline
        & \multicolumn{4}{c||}{$\eta_0=\delta_0=10^{-4}$} & \multicolumn{4}{c|}{$\eta_0=\delta_0=10^{-3}$}\\
        \hline
        Case & 1 & 2 & 3 & 4 & 1 & 2 & 3 & 4\\
        \hline
        MM & 106.3 & 108.5 & 108.7 & 107.5 & 106.3 & 108.5 & 108.7 & 107.5 \\
        \hline
        Hybrid & 102.7 & 107.9 & 62.4 & 106.7 & 101.1 & 107.1 & 50.2 & 107.2\\
        \hline
        Limit & 0.07 & 0.07 & 0.07 & 0.07 & 0.07 & 0.07 & 0.07 & 0.07 \\
        \hline
    \end{tabular}
    \caption{Test 1. Comparison of the computation time (sec), $\varepsilon=1.0$, $T=20.0$, $N_x=200$, $N_v=256$.}
    \label{tab:Perf1.0}
\end{table}

\begin{table}
    \centering
    \begin{tabular}{|c||c|c|c|c||c|c|c|c|}
        \hline
        & \multicolumn{4}{c||}{$\eta_0=\delta_0=10^{-4}$} & \multicolumn{4}{c|}{$\eta_0=\delta_0=10^{-3}$}\\
        \hline
        Case & 1 & 2 & 3 & 4 & 1 & 2 & 3 & 4\\
        \hline
        MM & 107.7 & 108.1 & 109.7 & 108.9 & 107.7 & 108.1 & 109.7 & 108.9\\
        \hline
        Hybrid & 29.9 & 107.8 & 32.6 & 106.7 & 26.6 & 109.8 & 29.5 & 108.0\\
        \hline
        Limit & 0.07 & 0.07 & 0.07 & 0.07 & 0.07 & 0.07 & 0.07 & 0.07\\
        \hline
    \end{tabular}
    \caption{Test 2. Comparison of the computation time (sec), $\varepsilon=0.1$, $T=20.0$, $N_x=200$, $N_v=256$.}
    \label{tab:Perf0.1}
\end{table}

\begin{table}
    \centering
    \begin{tabular}{|c||c|c|c|c||c|c|c|c|}
        \hline
        & \multicolumn{4}{c||}{$\eta_0=\delta_0=10^{-4}$} & \multicolumn{4}{c|}{$\eta_0=\delta_0=10^{-3}$}\\
        \hline
        Case & 1 & 2 & 3 & 4 & 1 & 2 & 3 & 4\\
        \hline
        MM & 106.8 & 107.8 & 109.4 & 107.3 & 106.8 & 107.8 & 109.4 & 107.3\\
        \hline
        Hybrid & 0.72 & 0.71 & 5.35 & 106.4 & 0.72 & 0.70 & 2.42 & 2.47\\
        \hline
        Limit & 0.07 & 0.07 & 0.07 & 0.07 & 0.07 & 0.07 & 0.07 & 0.07\\
        \hline
    \end{tabular}
    \caption{Test 3. Comparison of the computation time (sec), $\varepsilon=10^{-3}$, $T=20.0$, $N_x=200$, $N_v=256$.}
    \label{tab:Perf10-3}
\end{table}

\begin{table}
    \centering
    \begin{tabular}{|c||c|c|c|c||c|c|c|c|}
        \hline
        & \multicolumn{4}{c||}{$\eta_0=\delta_0=10^{-4}$} & \multicolumn{4}{c|}{$\eta_0=\delta_0=10^{-3}$}\\
        \hline
        Case & 1 & 2 & 3 & 4 & 1 & 2 & 3 & 4\\
        \hline
        MM & 108.8 & 108.6 & 108.1 & 105.7 & 108.8 & 108.6 & 108.1 & 105.7\\
        \hline
        Hybrid & 0.72 & 0.72 & 0.74 & 0.74 & 0.72 & 0.72 & 0.74 & 0.71\\
        \hline
        Limit & 0.07 & 0.07 & 0.07 & 0.07 & 0.07 & 0.07 & 0.07 & 0.07\\
        \hline
    \end{tabular}
    \caption{Test 4. Comparison of the computation time (sec), $\varepsilon=10^{-6}$, $T=20.0$, $N_x=200$, $N_v=256$.}
    \label{tab:Perf10-6}
\end{table}

\noindent We can make several observations. First, the fluid solver is as expected, much faster than the full kinetic one. Moreover it is also always faster than the hybrid method. This can easily be explained by the additional cost of computing the indicators and the added cost of dealing with interfaces between kinetic and fluid. Table \ref{tab:SpeedUp} shows the computational gain for the previous tests. In particular, the hybrid method appears does not offer a significant gain in very low collision regimes. Because of the dynamics of the solution, the coupling occurs very late and it cannot compensate the additional cost of the method. Another observation is that the hybrid method is less efficient when $E\neq0$. This could again be explained by the fact that the scheme is not well-balanced. Nevertheless, the speedups for small values of $\varepsilon$ become significant in both cases and the hybrid method becomes competitive with the fluid solver. A final observation is that the choice of larger coupling parameters indeed speeds up the method.

\begin{table}
    \centering
    \begin{tabular}{|c||c|c|c|c||c|c|c|c|}
        \hline
        & \multicolumn{4}{c||}{$\eta_0=\delta_0=10^{-4}$} & \multicolumn{4}{c|}{$\eta_0=\delta_0=10^{-3}$}\\
        \hline
        \diagbox{$\varepsilon$}{Case} & 1 & 2 & 3 & 4 & 1 & 2 & 3 & 4\\
        \hline
        $1.0$ & 1.03 & 1.00 & 1.74 & 1.01 & 1.05 & 1.01 & 2.16 & 1.00\\
        \hline
        $0.1$ & 3.60 & 1.00 & 3.37 & 1.02 & 4.04 & 0.98 & 3.72 & 1.01\\
        \hline
        $10^{-3}$ & 148.3 & 151.8 & 1.01 & 2.45 & 148.3 & 154.0 & 45.2 & 43.44\\
        \hline
        $10^{-6}$ & 151.1 & 150.8 & 142.8 & 146.1 & 147.0 & 158.0 & 146.1 & 148.9\\
        \hline
    \end{tabular}
    \caption{Speedup of the hybrid method compared to the full kinetic scheme, $N_x=200$, $N_v=256$.}
    \label{tab:SpeedUp}
\end{table}

\paragraph{Non homogeneous Knudsen number.}
In this last numerical experiment, we consider a non homogeneous Knudsen number in the physical domain. Let us define the function 
\begin{equation}
    e(x) = \frac{1}{2}(\arctan(5+10(x-\frac{\pi}{2}))+\arctan(5-10(x-\frac{\pi}{2}))).
\end{equation}
In particular, we choose $\varepsilon=\varepsilon(x)$ as
\begin{equation}
    \varepsilon(x) = \frac{e(x)}{\max(e(x))}.
\end{equation}
Such a function admits a maximum of $1$ in the center of the domain and decays to $0$ near the boundaries. Physically, it corresponds to few collisions in the center of the domain and to a fluid behaviour elsewhere. In the following simulations, $\Delta t=10^{-4}$ and the coupling parameters are $\delta_0=\eta_0=10^{-4}$. Note that depending on the choice of $\varepsilon(x)$ one may need to decrease the time step to ensure stability. From an implementation point of view, the constant $\varepsilon$ is simply replaced by $\varepsilon_\ipd=\varepsilon(x_\ipd)$ without any change to the indicators.

\noindent Figure \ref{fig:SnapDistNH} shows that the hybrid scheme captures well the behaviour of the distribution. Indeed, we observe a fast relaxation where $\varepsilon(x)$ is small and a much slower one in the center of the domain where $\varepsilon(x)$ is around $1$. Regarding the state of the cells, one can see on Figures \ref{fig:HybridCompDensNH} and \ref{fig:MassVarNH} that the fluid solver is quickly used where $\varepsilon(x)$ is small. Moreover, the last cells to become fluid are the one where the gradient of $\varepsilon(x)$ is large. It is explained by the nature of the macroscopic indicator which uses derivatives up to order 4. In this setting, the variation of mass was again of order $10^{-12}$. Finally, we looked at the convergence in time to the global equilibrium. On Figure \ref{fig:HypoNH}, one can again observe the exponential convergence to equilibrium and the rate is slightly higher than the one obtained with an homogeneous value of $\varepsilon=1$. Up to stability considerations, this experiment shows the robustness of the hybrid method. Performance-wise, considering Case 3 with $N_x=200$ and $N_v=256$ the full kinetic scheme takes $183.7$ seconds to run while the hybrid one takes $147.6$ seconds offering a speedup of $1.24$.

\begin{figure}
    \centering
    \includegraphics[width=\linewidth]{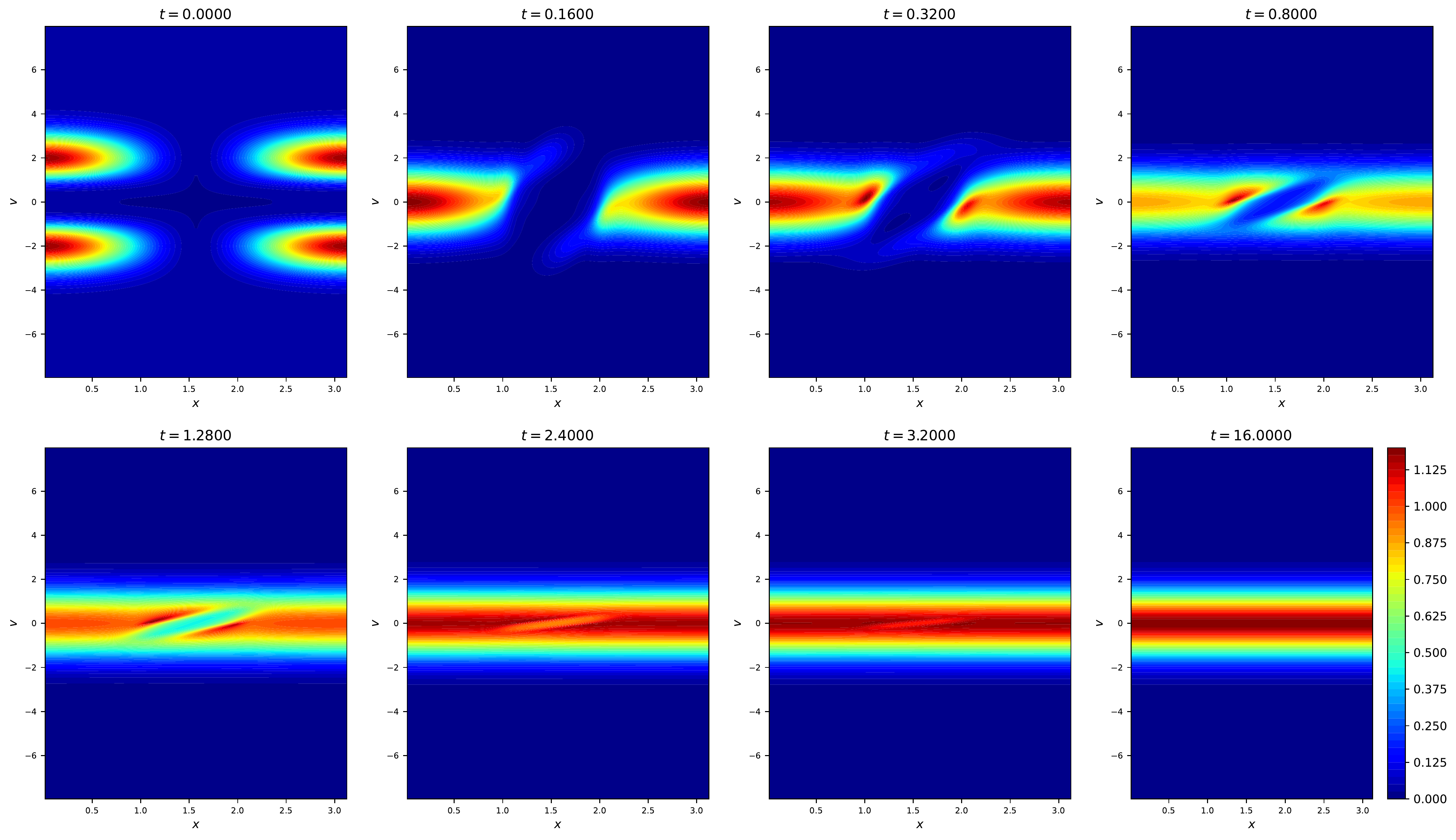}
    \caption{Case 3. Snapshots of distributions obtained with the hybrid scheme for a non homogeneous Knudsen number.}
    \label{fig:SnapDistNH}
\end{figure}
\begin{figure}
    \centering
    \includegraphics[width=\linewidth]{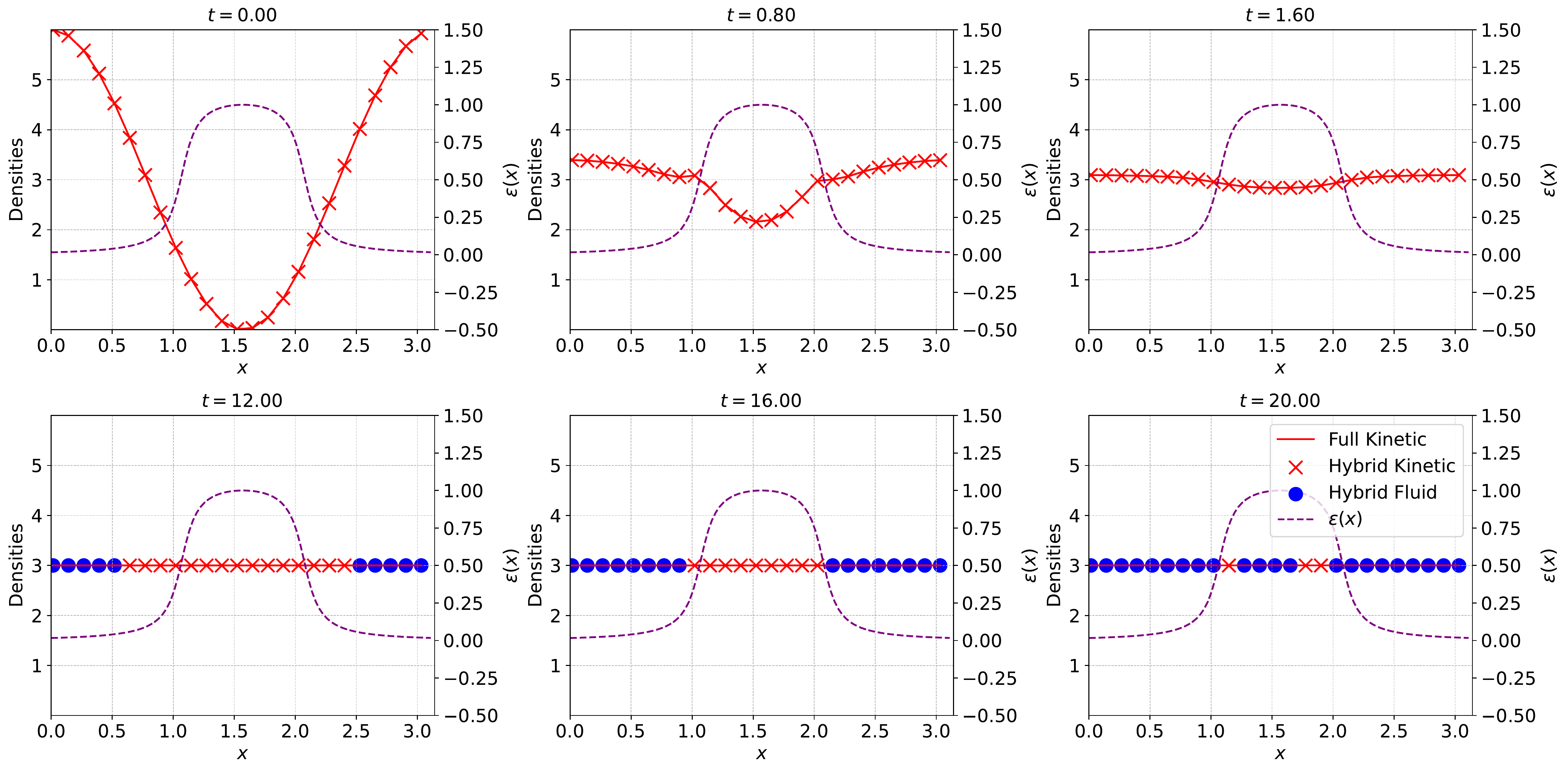}
    \caption{Case 3. Snapshots of the densities computed using the full kinetic and hybrid schemes for a non homogeneous Knudsen number.}
    \label{fig:HybridCompDensNH}
\end{figure}

\begin{figure}
    \centering
    \includegraphics[width=\linewidth]{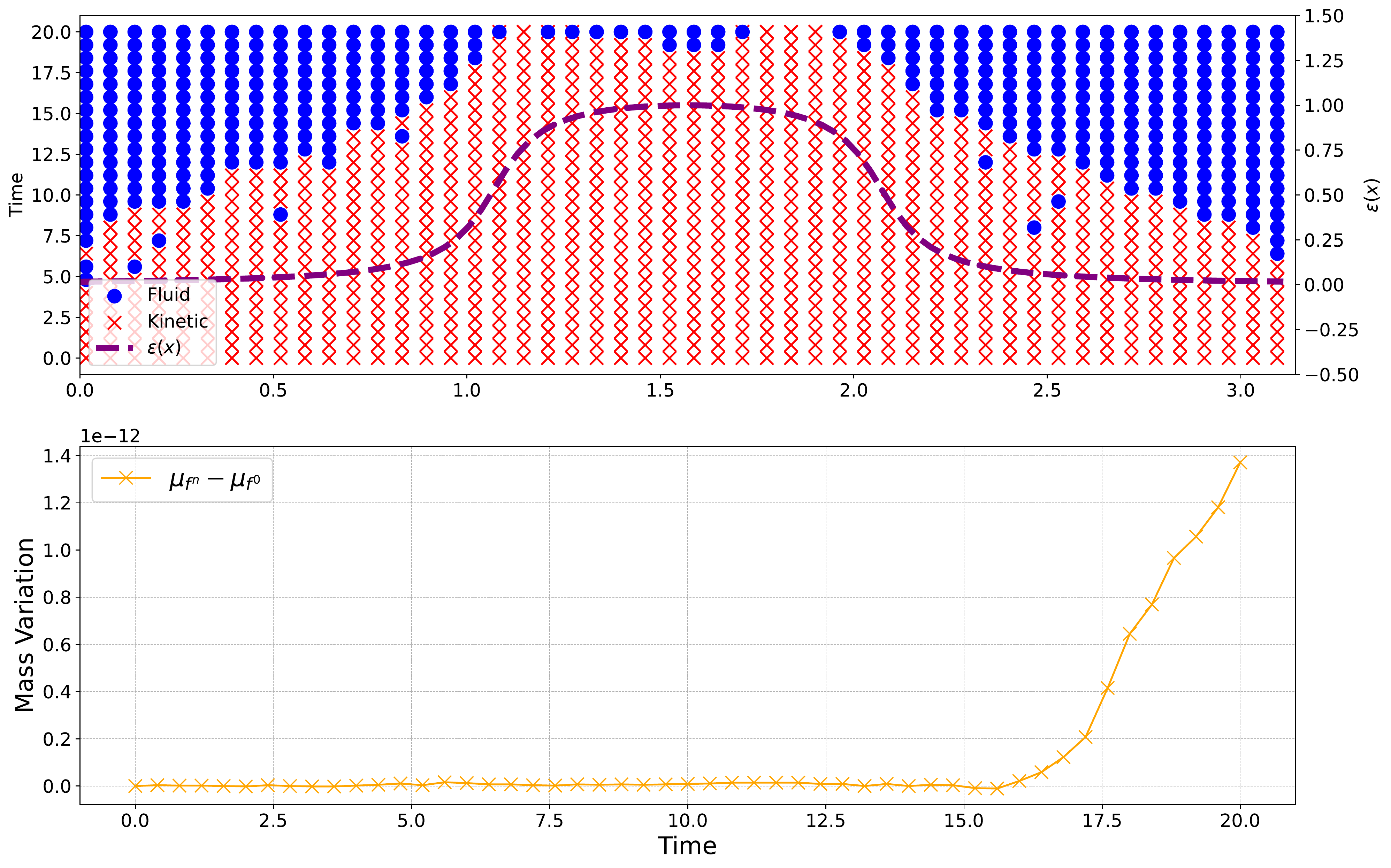}
    \caption{Case 3. Time evolution of the state of the cells (Top) and mass variation (Bottom) for a non homogeneous Knudsen number.}
    \label{fig:MassVarNH}
\end{figure}

\begin{figure}
    \centering
    \begin{tabular}{ll}
        \includegraphics[width=0.45\linewidth]{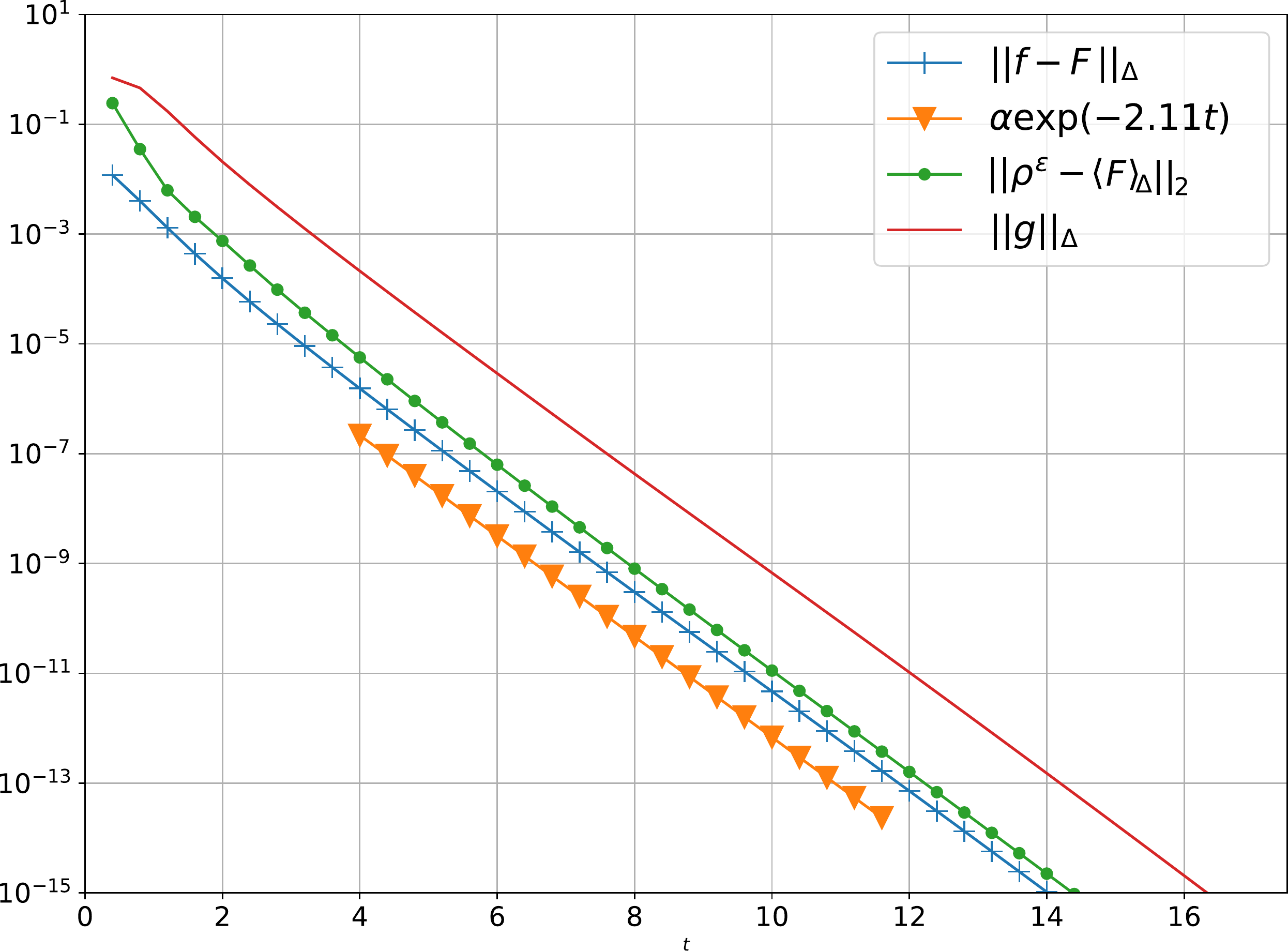} 
        &
        \includegraphics[width=0.45\linewidth]{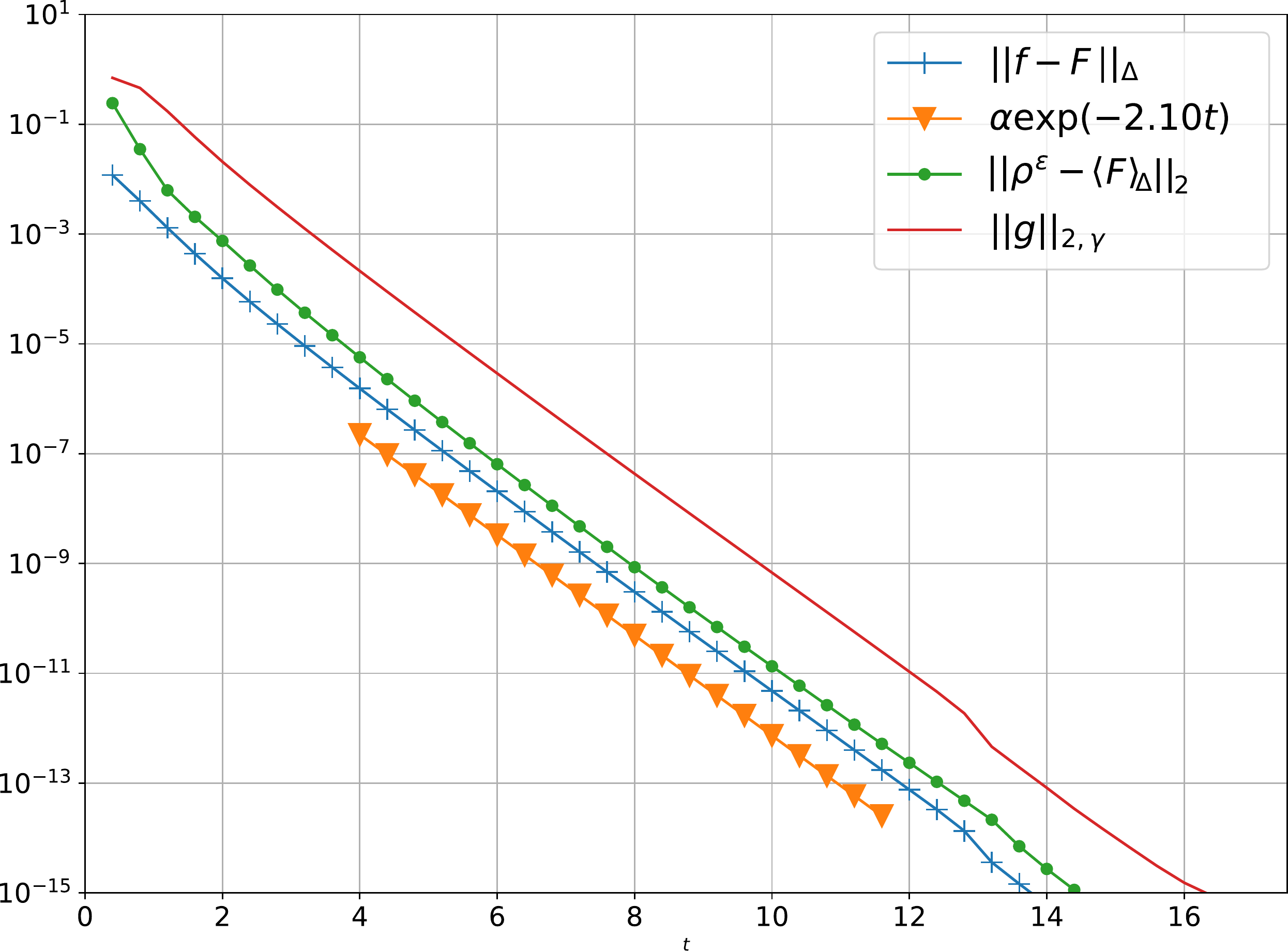}
    \end{tabular}
    \caption{Case 3. Convergence to a global equilibrium, full kinetic(Left), hybrid (Right) for a non homogeneous Knudsen number.}
    \label{fig:HypoNH}
\end{figure}
\section{Conclusion}
In this work, a new hybrid numerical method for linear kinetic equations in the diffusive scaling was presented. The method relies on two criteria motivated by a pertubative approach. The first one quantifies how far from a local equilibrium the distribution function is. The second criterion depends on the macroscopic quantities that are available on the whole computing domain. We have managed to quantify the mass variation induced by the method and we have shown that it is in practice very small. The method has proven to be efficient through various numerical experiments: the computational gain compared to a full kinetic scheme is significant. Moreover, the method performs well with a non-homogeneous Knudsen number in position which is encouraging to tackle more physically motivated problems.

In future works, a more general and physically relevant setting will be considered. In particular, it involves the multidimensional setting, a more general collision operator and a coupling with the Poisson equation. We are confident that the computational gain will be even more worthwhile in a full $3D-3D$ setting in the case of the Boltzmann operator which is known to be costly numerically. 

\section*{Acknowledgements}
    T.L. was partially funded by Labex CEMPI (ANR-11-LABX-0007-01)
    and ANR Project MoHyCon (ANR-17-CE40-0027-01). He also would like to thank Thomas Rey and Marianne Bessemoulin-Chattard for the fruitful discussions and insights.

\bibliography{HybridMicroMacro2021.bib}{}
\bibliographystyle{acm}

\end{document}